\newcommand\blfootnote[1]{%
  \begingroup
  \renewcommand\thefootnote{}\footnote{#1}%
  \addtocounter{footnote}{-1}%
  \endgroup
}
\newcounter{proofcount}
\newtheorem{claim}{Claim}
\theoremstyle{remark}
\newtheorem*{cproof/}{Proof of Claim \rev@cproofmark} 
\newenvironment{cproof}[1][\@nil]
  {\def\@tmp{#1}%
   \ifx\@tmp\@nnil
       \def\rev@cproofmark{\theclaim}%
    \else
       \let\rev@cproofmark\@tmp%
    \fi
   \pushQED{\qed}\begin{cproof/}}
  {\popQED\end{cproof/}}
\newcommand{\rr}{\mathbb{R}}
\newcommand{\qq}{\mathbb{Q}}
\newcommand{\VL}{V \mathord{=} L}
\newcommand{\CH}{\textsf{CH}}
\newcommand{\hh}{\mathcal{H}}
\newcommand{\fcant}{2^{<\omega}}
\newcommand{\cant}{2^{\omega}}
\newcommand{\len}{\ell}
\newcommand{\str}[1]{\overline{#1}}
\newcommand{\real}[1]{\tilde{#1}}
\newcommand{\realo}[1]{\tilde{#1}_{+}}
\newcommand{\set}[2]{\{ #1 \,\vert\, #2 \}}
\newcommand{\Set}[2]{\left\{ #1 \,\middle\vert\, #2 \right\}}
\theoremstyle{plain}
\newtheorem{thm}{Theorem}[section] 
\crefname{thm}{theorem}{theorems} 
\newtheorem{prp}[thm]{Proposition}
\newtheorem{lem}[thm]{Lemma}
\newtheorem{cor}[thm]{Corollary}
\theoremstyle{definition}
\newtheorem{dfn}[thm]{Definition}
\theoremstyle{remark}
\newtheorem{rem}[thm]{Remark}
\numberwithin{equation}{section}
\DeclareMathOperator{\Dim}{Dim}
\DeclareMathOperator{\ran}{ran}
\DeclareMathOperator{\proj}{proj}
\DeclareMathOperator{\diam}{diam}
\begin{document}

\title[Co-analytic Counterexamples to Marstrand's Theorem]{Co-analytic Counterexamples to Marstrand's Projection Theorem}

\author{Linus Richter}
\address{School of Mathematics and Statistics, Victoria University of Wellington, Wellington, New Zealand}
\email{linus.richter@sms.vuw.ac.nz}
\urladdr{\url{https://homepages.ecs.vuw.ac.nz/~richtelinu/}}

\blfootnote{2020 \textit{Mathematics Subject Classification.} Primary: 03D32. Secondary: 28A80

2012 \textit{ACM classification}: Theory of Computation $\rightarrow$ Complexity theory and logic

\textit{Keywords and phrases:} algorithmic randomness, fractal geometry, Hausdorff dimension, orthogonal projection}


\begin{abstract}
Assuming $\VL$, we construct a plane set $E$ of Hausdorff dimension $1$ whose every orthogonal projection onto straight lines through the origin has Hausdorff dimension $0$. This is a counterexample to J.\ M.\ Marstrand's seminal projection theorem \cite{marstrand}. While counterexamples had already been constructed decades ago, initially by R.\ O.\ Davies \cite{daviesCounterexample}, the novelty of our result lies in the fact that $E$ is co-analytic. Following Marstrand's original proof \cite{marstrand} (and R.\ Kaufman's newer, and now standard, approach \cite{kaufman} based on capacities), a counterexample to the projection theorem cannot be analytic, hence our counterexample is optimal. We then extend the result in a strong way: we show that for each $\epsilon \in (0,1)$ there exists a co-analytic set $E_{\epsilon}$ of dimension $1+ \epsilon$, each of whose orthogonal projections onto straight lines through the origin has Hausdorff dimension $\epsilon$. The constructions of $E$ and $E_{\epsilon}$ are by induction on the countable ordinals, applying a theorem by Z.\ Vidny\'{a}nszky \cite{zoltan}.
\end{abstract}

\maketitle


\section{Introduction}

Hausdorff measure is one of the fundamental notions of classical fractal geometry. While Lebesgue measure limits itself to well-behaved structures such as planes or lines, or remains uninformative otherwise, Hausdorff measure is much finer (in particular, it extends Lebesgue measure). Hausdorff measure is defined similarly to Lebesgue measure: one considers covers of a set. However, for Hausdorff measure, there are two important differences: (1) the covers need not be open; and (2), the diameter of the cover sets is raised to a chosen power $\alpha > 0$. It is easily seen that for every set $X$ there exists $s \geq 0$ such that the $\alpha$-dimensional Hausdorff measure of $X$ is 0 if $\alpha > s$; and that it is infinite if $\alpha < s$. What value the $\alpha$-dimensional Hausdorff measure of $X$ takes with $\alpha = s$ is anyone's guess. It could be 0, infinite, or take on some other finite value. For this reason, $s$ is called the critical value---it is also called the \emph{Hausdorff dimension} of $X$.

An illuminating quote explaining the intuition behind the critical value is the following \cite[p.\ 168]{edgar}:

\begin{quote}
This idea of dimension is an abstraction of what we already know from elementary geometry. If $A$ is a [...] curve, then its length is a useful way to measure its size; but its ``area'' and ``volume'' are $0$. The dimensions $2$ and $3$ are too large to help in measuring the size of $A$. If $B$ is the surface of a sphere, then its area is positive and finite. We can say its ``length'' is infinite (for example, since it contains curves that are as long as we like which spiral around); its ``volume'' is 0, since it is contained in a solid spherical shell whose thickness is as small as we like. So for the set $B$, the dimension 1 is too small, the dimension 3 is too large, and the dimension 2 is just right. The $\alpha$-dimensional Hausdorff measure give us a way of measuring the size of a set for dimensions $\alpha$ other than the integers 1, 2, 3, \ldots
\end{quote}

Since the early development of fractal geometry, a few theorems have turned out to be foundational. Marstrand's projection theorem \cite{marstrand} dating back to 1954 is one of them: it forms a cornerstone of classical fractal geometry today. While ignored for decades (with the term ``fractal geometry'' only arriving in the 1970s), fractal geometry, and projection theorems like Marstrand's, are researched intensively nowadays \cite{falconerFraserXiong}.

In essence, the theorem states that orthogonal projections of analytic sets cannot drop too far in dimension---but there are exceptions. The first to notice this was R.\ O.\ Davies \cite{daviesCounterexample}, who constructed a counterexample non-constructively, assuming the Continuum Hypothesis $\CH$. Simplifications \cite{kaufman} and generalisations \cite{mattila} of Marstrand's theorem followed over the subsequent decades. Nowadays, the standard argument to prove Marstrand's projection theorem is Kaufman's proof \cite{kaufman} which is based on \emph{energy potential characterisations} of Hausdorff dimension. Refinements of the theorem are sought after today \cite{dimensionZero,strongMarstrand}.\\

While fractal geometry traditionally depended on measure arguments, other approaches have advanced over the last decades. With the first such being R.\ Kaufman's aforementioned proof \cite{kaufman} of Marstrand's Theorem, more recently, advances in bridging fractal geometry to information theoretical tools have succeeded \cite{lutzGales,mayordomo,hitchcockPhD,lutz2003}: standing out is the recent \emph{point-to-set principle} of Lutz and Lutz \cite{kakeya}, which relates the Hausdorff dimension of a \emph{set} to the dimensions of its constituent \emph{points}. The dimension of points, in this context, is given using tools of algorithmic information theory: one considers computability theory, and in particular, Kolmogorov randomness. We present the required background in subsequent sections.

The point-to-set principle has already proven to be a very useful tool in the theory of orthogonal projections. This applies to both Hausdorff dimension $\dim_H$ as well as packing dimension $\dim_P$: Lutz and Stull \cite{effectiveDimension} have shown using algorithmic arguments that if $X \subset \rr^2$ satisfies $\dim_H(X) = \dim_P(X)$ then Marstrand's theorem applies, and so the requirement of being analytic can be dropped. They also give a new bound on the packing dimension of orthogonal projections under packing dimension, and provide a new proof of Marstrand's theorem in the same paper. It is noted that it was already known that packing dimension does not admit a Marstrand-like result \cite{packingNoMars}.\\

In the present paper we show that being analytic is in fact sharp for Marstrand's theorem, a fact previously unknown: we construct an optimal counterexample to Marstrand's projection theorem using non-classical tools. In particular, we use Lutz' and Lutz' point-to-set principle \cite{kakeya} to construct a set of Hausdorff dimension 1, all of whose projections vanish. Using a result by Vidny\'{a}nszky \cite{zoltan} we show that the constructed set is co-analytic. We then extend our result to produce a co-analytic Marstrand-failing set of dimension $1 + \epsilon$ for each $\epsilon \in (0,1)$ in a strong way: for each $\epsilon$ we produce a co-analytic set $X$ such that $\dim_H(X) = 1 + \epsilon$ while its projection onto every line through the origin has dimension $\epsilon$, the minimal allowable value.\\

Throughout our work we make the set-theoretic assumption that $\VL$. This is for combinatorial reasons, as it allows us to argue by transfinite induction in our construction, following Vidny\'{a}nszky's theorem.\\

We would like to note that the results in this paper have independently been obtained by T.\ Slaman and D.\ Stull.

\subsection{Marstrand's Projection Theorem}
The original statement of John Marstrand's Projection Theorem is as follows (we have flipped the order from how it appears in the source).

\begin{thm}[MPT, {\cite[Thm I \& II]{marstrand}}]\label{thm:mpt}
	Let $E \subset \rr^2$ be analytic. Let $\proj_{\theta}(E)$ denote the projection of $E$ onto the unique line passing through the origin at angle $\theta$ with the first axis. Let $\mu$ denote the one-dimensional Lebesgue measure.
	\begin{enumerate}
		\item If $\dim_H(E) \leq 1$ then for almost all $\theta \in [0,\pi)$ we have $\dim_H(\proj_{\theta}(E)) = \dim_H(E)$. \label{eq:mpt1}
		\item If $\dim_H(E) > 1$ then for almost all $\theta \in [0,\pi)$ we have $\mu(\proj_{\theta}(E)) > 0$. \label{eq:mpt2}
	\end{enumerate}
\end{thm}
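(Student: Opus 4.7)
The plan is to follow R.\ Kaufman's well-known capacity-based proof. The central input is \emph{Frostman's lemma for analytic sets}: for any analytic $E$ and any $s < \dim_H(E)$ there exists a compactly supported probability measure $\nu$ on $E$ of finite $s$-energy
\[I_s(\nu) = \iint \frac{d\nu(x) d\nu(y)}{|x-y|^s} < \infty,\]
and conversely, any set carrying such a measure has $\dim_H \geq s$. I would then reduce both parts of the statement to Fubini-style computations involving the pushforward measures $\nu_\theta = (\proj_\theta)_\ast \nu$ supported on $\proj_\theta(E)$.

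For part (1), fix $t < \dim_H(E) \leq 1$, choose a Frostman measure $\nu$ of exponent $t$ on $E$, and average the $t$-energies of the projections over $\theta$. Writing a displacement vector $x-y$ in polar form with angle $\phi(x,y)$ so that $\proj_\theta(x-y) = |x-y| \cos(\theta - \phi(x,y))$, Fubini gives
\[\int_0^\pi I_t(\nu_\theta) d\theta = \iint \left( \int_0^\pi \frac{d\theta}{|\cos(\theta - \phi(x,y))|^t} \right) \frac{d\nu(x) d\nu(y)}{|x-y|^t}.\]
The inner integral is bounded by a finite constant $C_t$ independent of $\phi$, because $|\cos\theta|^{-t}$ is Lebesgue-integrable on $[0,\pi]$ precisely when $t < 1$. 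Hence $I_t(\nu_\theta) < \infty$ for almost every $\theta$, which yields $\dim_H(\proj_\theta(E)) \geq t$ a.e. Letting $t \nearrow \dim_H(E)$ along a countable sequence and combining with the trivial upper bound $\dim_H(\proj_\theta(E)) \leq \dim_H(E)$ from the $1$-Lipschitz property of $\proj_\theta$ produces equality.

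For part (2), fix $1 < s < \dim_H(E)$ and a Frostman measure $\nu$ of exponent $s$ on $E$; since $\nu$ has bounded support, $I_1(\nu) < \infty$ as well. I would then apply the Fourier-analytic identity $I_1(\nu) = c \int_{\rr^2} |\hat\nu(\xi)|^2 |\xi|^{-1} d\xi$, switch to polar coordinates $\xi = r e_\theta$ (which absorbs the $|\xi|^{-1}$ into the Jacobian), and use that $\widehat{\nu_\theta}(r) = \hat\nu(r e_\theta)$ together with Plancherel on $\rr$ to obtain
\[I_1(\nu) = c' \int_0^\pi \|\nu_\theta\|_{L^2(\rr)}^2 d\theta < \infty.\]
Thus $\nu_\theta$ has an $L^2$ density for almost every $\theta$; since $\nu_\theta$ is a probability measure concentrated on $\proj_\theta(E)$, absolute continuity with respect to one-dimensional Lebesgue measure $\mu$ forces $\mu(\proj_\theta(E)) > 0$.

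The hard part is not the two computations, which are routine Fubini and Plancherel manipulations, but securing the initial Frostman measure: for closed sets this is elementary via compactness, but for analytic sets it rests on Choquet's capacitability theorem, and this is precisely the place where the analyticity hypothesis is indispensable. Since the point of the present paper is to show that dropping analyticity can destroy the conclusion entirely, any proof of MPT must rely on an input of comparable descriptive-set-theoretic strength at exactly this step.
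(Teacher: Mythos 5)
The paper cites Marstrand's theorem as a known classical result and does not reprove it; the abstract and introduction explicitly identify Kaufman's capacity/energy approach as the standard modern proof, which is exactly what you have reconstructed. Your sketch is correct: part (1) is Kaufman's energy-averaging argument with the observation that $\int_0^\pi |\cos\theta|^{-t}\,d\theta<\infty$ iff $t<1$, part (2) is the Fourier identity $I_1(\nu)\asymp\int|\hat\nu(\xi)|^2|\xi|^{-1}\,d\xi$ plus the slice relation $\widehat{\nu_\theta}(r)=\hat\nu(re_\theta)$ and Plancherel, and you correctly locate the descriptive-set-theoretic input in Frostman's lemma for analytic sets via Choquet capacitability (which is indeed the step that breaks when analyticity is removed, exactly as the paper's counterexamples demonstrate). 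Minor cosmetic points only: the pushforward $\nu_\theta$ is concentrated on $\proj_\theta(E)$ because $\nu(E)=1$ (you state ``supported on'' a bit loosely), and in part (2) the passage from $\int_0^\infty$ to $\|\cdot\|_{L^2(\rr)}^2$ and from $[0,2\pi)$ to $[0,\pi)$ costs harmless factors of two; neither affects the conclusion.
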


It is noted that \cref{eq:mpt2} is strictly stronger than an assertion about Hausdorff dimension \cite{falconerEtAl}. This follows from the standard fact that, in $\rr^m$, Hausdorff measure generalises $m$-dimensional Lebesgue measure (the proof is a straightforward measure argument \cite[6.1]{edgar}). In particular, if $A \subset \rr$ has positive Lebesgue measure, then $\dim_H(A) = 1$. An easy Corollary of \cref{eq:mpt2} is hence:

\begin{cor}\label{cor:mpt2}
	If $E \subset \rr^2$ is analytic and $\dim_H(E) > 1$ then for almost all $\theta \in [0,\pi)$ we have $\dim_H(\proj_{\theta}(E)) = 1$.
\end{cor}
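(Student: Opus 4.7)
The plan is to derive this corollary directly from \cref{eq:mpt2} of \cref{thm:mpt} combined with the standard measure-theoretic fact cited in the paragraph immediately preceding the statement. The argument has two halves: a lower bound $\dim_H(\proj_\theta(E)) \geq 1$ coming from positivity of Lebesgue measure, and a matching upper bound $\dim_H(\proj_\theta(E)) \leq 1$ coming from the fact that $\proj_\theta(E)$ sits inside a line.

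For the lower bound, I would first apply \cref{eq:mpt2} to conclude that there exists a conull set $\Theta \subset [0,\pi)$ such that $\mu(\proj_\theta(E)) > 0$ for every $\theta \in \Theta$. Then, invoking the cited fact that every subset of $\rr$ of positive one-dimensional Lebesgue measure has Hausdorff dimension $1$, I obtain $\dim_H(\proj_\theta(E)) \geq 1$ for every $\theta \in \Theta$.

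For the upper bound, I would observe that $\proj_\theta$ is a $1$-Lipschitz map from $\rr^2$ onto the line $L_\theta$ through the origin at angle $\theta$. Since Hausdorff dimension does not increase under Lipschitz maps and $L_\theta$ is isometric to $\rr$ (so every subset has Hausdorff dimension at most $1$), we get $\dim_H(\proj_\theta(E)) \leq 1$ for every $\theta \in [0,\pi)$. Intersecting with $\Theta$ gives equality on a conull set, which is the desired conclusion.

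I don't anticipate any real obstacle: this is a one-line consequence of \cref{eq:mpt2} once the lower-bound fact from \cite[6.1]{edgar} is in hand, plus the trivial upper bound from projections being Lipschitz onto a line. The only small point to be careful about is to note explicitly that the conull set of ``good'' angles produced by \cref{thm:mpt}\cref{eq:mpt2} is the same one we use in the conclusion; the upper bound holds for \emph{all} $\theta$, so no extra null set need be discarded.
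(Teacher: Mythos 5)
Your proposal is correct and is essentially the same argument the paper intends: the paragraph before the corollary reduces it to the fact that a subset of $\rr$ of positive Lebesgue measure has Hausdorff dimension exactly $1$, which is precisely your lower bound combined with the (automatic) upper bound from lying inside a line. The paper simply folds both bounds into the single statement ``$\dim_H(A)=1$'', whereas you spell out the $\leq 1$ half via the Lipschitz observation, but there is no substantive difference.
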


\subsection{Our Theorems}
We provide co-analytic counterexamples to both \cref{eq:mpt1,eq:mpt2} of \cref{thm:mpt}. Since analytic sets satisfy \cref{thm:mpt}, our results are sharp. The first theorem is proven in \cref{sec:firstThm}, the second in \cref{sec:secondThm}. Recall that we assume $\VL$.

\theoremstyle{plain}
\newtheorem*{thm:firstThm}{\Cref{thm:firstThm}}
\begin{thm:firstThm}
There exists a co-analytic set $E \subset \rr^2$ such that $\dim_H(E) = 1$ while, for every $\theta \in [0,2\pi)$ we have $\dim_H(\proj_{\theta}(E)) = 0$.
\end{thm:firstThm}

The construction is carried out via a theorem due to Z.\ Vidny\'{a}nszky \cite{zoltan}, which allows us to build co-analytic sets using co-analytic conditions by recursion. Let $\mathbb{D}$ denote the first quadrant of the unit disc, which we shall consider in more detail in \cref{sec:polar}.

Given $F \subset \mathbb{D}^{\leq \omega} \times [0,\pi/2] \times \mathbb{D}$, a set $X = \set{x_{\alpha}}{\alpha < \omega_1}$ is \emph{compatible with $F$} if the following exist:
\begin{itemize}
	\item an enumeration $\set{p_{\alpha}}{\alpha < \omega_1}$ of $B$; and
	\item an enumeration $\set{A_{\alpha}}{\alpha < \omega_1} \subset \mathbb{D}^{\leq\omega}$ such that if $\alpha < \omega_1$ then $A_{\alpha} = X \upharpoonright \alpha$
\end{itemize}
such that for each $\alpha < \omega_1$ we have $(A_{\alpha},p_{\alpha},x_{\alpha}) \in F$.

\theoremstyle{plain}
\newtheorem*{thm:zoltansThm}{\Cref{thm:zoltansThm}}
\begin{thm:zoltansThm}[{\cite{zoltan}}]
	Let $F \subset \mathbb{D}^{\leq \omega} \times [0,\pi/2] \times \mathbb{D}$. If $F$ is co-analytic and if for all $(A,p) \in \mathbb{D}^{\leq \omega} \times [0,\pi/2]$ the section
\begin{align*}
     F(A,p) = \{ x \in \mathbb{D} \mid F(A,p,x) \}                                                                                                                                                                                                   \end{align*}
is cofinal in the Turing degrees, then there exists a co-analytic set $X \subset \mathbb{D}$ that is compatible with $F$.
\end{thm:zoltansThm}

We revisit Vidny\'{a}nszky's theorem in more detail in \cref{sec:zoltan}.

Our second contribution is the following stronger result, which we prove in \cref{sec:secondThm}.

\theoremstyle{plain}
\newtheorem*{thm:secondThm}{\Cref{thm:secondThm}}
\begin{thm:secondThm}
For every $\epsilon \in (0,1)$, there exists a co-analytic set $E \subset \rr^2$ such that $\dim_H(E) = 1 + \epsilon$ while, for every $\theta \in [0,2\pi)$ we have $\dim_H(\proj_{\theta}(E)) = \epsilon$.
\end{thm:secondThm}

The arguments in the proof of this more general theorem are similar to those prior; however, the coding procedure is more involved.


\subsection*{Acknowledgments}

I would like to thank my doctoral advisor Daniel Turetsky for bringing the question around counterexamples to Marstrand's theorem to my attention, as well as for his continuous advice provided throughout the development of this paper, and for his very informative feedback on earlier versions of this manuscript.

\section{Preliminaries}

In this section, we give some necessary preliminaries on Hausdorff measure and dimension, as well as computability theory and the theory of complexity. These are required in order to use the point-to-set principle \cref{thm:pts} in our construction.

\subsection{A review of Hausdorff measure and dimension}
While classically geometric measure theory formed the backbone of fractal geometry, this is not necessarily so today. When working with the point-to-set principle, measure theory is mostly unnecessary. However, we occasionally switch between one and the other; some proofs are expressed and proven more simply in the language of measures than in that of computability theory. Hence we provide a very brief introduction to Hausdorff measure, dimension, and in particular Lipschitz maps below.

Consider a subset $E \subset \rr^2$. In order to define Hausdorff measure, we first consider
\begin{align*}
 \hh^s_{\delta}(E) = \inf\left\{ \sum_{i < \omega} |U_i|^s \ \middle\vert \ E \subset \bigcup_{i < \omega} U_i \land (\forall i < \omega)(|U_i| < \delta) \right\}
\end{align*}
where $d$ is the usual Euclidean distance and $|U| = \sup \{ d(x,y) \mid x,y \in U \}$, the \emph{diameter of $U$}.

\begin{rem}\label{HausdorffMeasureIncreases}
Observe that as $\delta$ increases, we get to include more covers in our infimum, and hence
\begin{center}
 if $0 < \delta < \delta'$ then $\hh^s_{\delta'} < \hh^s_{\delta}$.
\end{center}
So, as $\delta$ decreases, the term $\hh^s_{\delta}(E)$ increases. In particular, the limit $\lim_{\delta \rightarrow 0^+} \hh^s_{\delta}(E)$ always exists (it might be infinite!).
\end{rem}

\begin{dfn}
Let $E \subset \rr^2$. We define the \emph{$s$-dimensional Hausdorff measure of $E$} as follows:
\begin{align*}
\hh^s(E) = \lim_{\delta \rightarrow 0^+} \hh^s_{\delta}(E).
\end{align*}
\end{dfn}

It is easily seen that there must exist a \emph{critical value} for $s$ at which the $s$-dimensional Hausdorff measure changes from $\infty$ to 0---this is the \emph{Hausdorff dimension}.

\begin{dfn}\label{dfn:hausdorffDim}
 Let $E \subset \rr^2$. Then the \emph{Hausdorff dimension $\dim_H(E)$} is defined as
 \begin{align*}
  \dim_H(E) = \sup\{ s \geq 0 \mid \mathcal{H}^s(E) = \infty \} = \inf\{ s \geq 0 \mid \mathcal{H}^s(E) = 0 \}.
 \end{align*}
\end{dfn}

Finally, and quite importantly, Hausdorff dimension is well-behaved under Lipschitz maps. Recall that a map $f \colon \rr^m \rightarrow \rr^m$ is \emph{Lipschitz with constant $M > 0$} if for all $x,y \in \rr^m$ we have $|f(x) - f(y)| \leq M|x-y|$. (Here, $| \cdot |$ denotes the Euclidean norm on $\rr^m$.) Fixing $m=2$ it is noted that Lipschitz maps cannot increase dimension:

\begin{lem}\label{lem:rotationPreservesDimH}
Let $E \subset \rr^2$. If $f \colon \rr^2 \rightarrow \rr^2$ satisfies a Lipschitz condition then $\dim_H(f(E)) \leq \dim_H(E)$.	
\end{lem}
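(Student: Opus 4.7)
The plan is to chase covers through $f$, using the Lipschitz constant to control how diameters grow, and then read off the dimension inequality from the definition. Concretely, fix any $s \geq 0$ and any $\delta > 0$, and suppose $\{U_i\}_{i < \omega}$ is a cover of $E$ by sets of diameter at most $\delta$. I would first observe that $\{f(U_i \cap E)\}_{i < \omega}$ covers $f(E)$, and that the Lipschitz condition forces $|f(U_i \cap E)| \leq M \cdot |U_i \cap E| \leq M \cdot |U_i|$. Hence this is an $(M\delta)$-cover of $f(E)$, and summing gives
\begin{align*}
\sum_{i < \omega} |f(U_i \cap E)|^s \leq M^s \sum_{i < \omega} |U_i|^s.
\end{align*}

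Taking the infimum over all $\delta$-covers on the right, and noting that the left-hand side is an upper bound for the infimum over all $(M\delta)$-covers of $f(E)$, yields $\hh^s_{M\delta}(f(E)) \leq M^s \hh^s_\delta(E)$. Letting $\delta \to 0^+$ (so that $M\delta \to 0^+$ as well, since $M$ is a fixed positive constant), and invoking \cref{HausdorffMeasureIncreases} to pass to the limit on each side, I obtain $\hh^s(f(E)) \leq M^s \hh^s(E)$.

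From here the dimension bound is immediate: if $s > \dim_H(E)$, then by \cref{dfn:hausdorffDim} we have $\hh^s(E) = 0$, so $\hh^s(f(E)) = 0$, whence $\dim_H(f(E)) \leq s$. Taking the infimum over all such $s$ yields $\dim_H(f(E)) \leq \dim_H(E)$.

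There is no real obstacle here; the only mild subtlety is being careful about the case where some $U_i$ fails to meet $E$ (in which case $f(U_i \cap E) = \emptyset$ and contributes zero, so the inequality is unaffected), and noting that restricting to $U_i \cap E$ is legitimate because the definition of $\hh^s_\delta(E)$ is invariant under replacing each cover set by its intersection with $E$. The argument is essentially identical to the standard proof that Hausdorff dimension is preserved by bi-Lipschitz maps, and depends only on the defining formulas for $\hh^s_\delta$ and $\dim_H$ given earlier in this section.
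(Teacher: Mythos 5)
Your proof is correct and follows essentially the same cover-chasing route as the paper's, which isolates the measure inequality $\hh^s(f(E)) \leq M^s\hh^s(E)$ as an intermediate step (\cref{lem:boundOnHausdorffMeasure}) before reading off the dimension bound. Your concluding step is in fact a touch more careful than the paper's: you apply the inequality only for $s > \dim_H(E)$, where $\hh^s(E) = 0$ is guaranteed by \cref{dfn:hausdorffDim}, whereas the paper argues at $s = \dim_H(E)$ and asserts $\hh^s(E) < \infty$, which need not hold at the critical value.
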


We use the following simple lemma.

\begin{lem}\label{lem:boundOnHausdorffMeasure}
Suppose $f \colon \rr^2 \rightarrow \rr^2$ is Lipschitz with constant $M > 0$. Then
\[ 
	\mathcal{H}^s(f(E)) \leq M^s\hh^s(E)
\]	
for all $s \geq 0$.
\end{lem}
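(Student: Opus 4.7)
The plan is to transport a cover of $E$ to a cover of $f(E)$ using $f$ itself, and then estimate diameters using the Lipschitz condition. More precisely, fix $\delta > 0$ and let $\{U_i\}_{i<\omega}$ be any cover of $E$ with $|U_i| < \delta$ for all $i$. Then $\{f(U_i)\}_{i<\omega}$ covers $f(E)$, since every $y \in f(E)$ is $f(x)$ for some $x \in E \subset \bigcup_i U_i$, which lies in some $U_i$ and hence $y \in f(U_i)$.

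The next step is to bound the diameter of each image piece. Given any $u,v \in U_i$, the Lipschitz condition yields $|f(u)-f(v)| \leq M|u-v| \leq M|U_i|$, and so $|f(U_i)| \leq M|U_i| < M\delta$. This shows that $\{f(U_i)\}_{i<\omega}$ is an $M\delta$-cover of $f(E)$, and moreover
\[
  \sum_{i<\omega} |f(U_i)|^s \;\leq\; \sum_{i<\omega} \bigl(M|U_i|\bigr)^s \;=\; M^s \sum_{i<\omega} |U_i|^s.
\]
Taking the infimum over all such $\delta$-covers of $E$ gives $\hh^s_{M\delta}(f(E)) \leq M^s \hh^s_\delta(E)$.

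Finally, we let $\delta \to 0^+$. Since $\delta \to 0^+$ forces $M\delta \to 0^+$, and since by \cref{HausdorffMeasureIncreases} both quantities converge to the respective Hausdorff measures, we obtain $\hh^s(f(E)) \leq M^s \hh^s(E)$, as desired. The only subtle point is ensuring the limit passage is valid, but because $\hh^s_\delta$ is monotone in $\delta$ and the limit defining $\hh^s$ is the supremum over $\delta > 0$, the scaling by the constant factor $M$ inside the subscript causes no issue. This essentially reduces to a routine calculation, and I would not expect any real obstacle; the substantive content is entirely in the elementary diameter bound $|f(U_i)| \leq M|U_i|$.
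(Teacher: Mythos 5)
Your proof is correct and follows essentially the same approach as the paper: map a $\delta$-cover of $E$ through $f$, bound diameters via the Lipschitz condition to get an $M\delta$-cover of $f(E)$, and pass to the limit. You have merely spelled out a few more of the routine steps.
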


\begin{proof}
	Suppose $f$ has Lipschitz constant $M$, and that $(U_i)$ is a $\delta$-cover for $E$. Then $(f(U_i))$ is an $M\delta$-cover for $f(E)$. Thus
	\[
		\hh^s_{M\delta}(f(E)) \leq \sum_{i < \omega} |f(U_i)|^s \leq \sum_{i < \omega} M^s|U_i|^s
	\]
	since, for any $f(x),f(y) \in f(U_i)$ we have $|f(x) - f(y)| \leq M|x-y|$, whence $|f(U_i)| \leq M|U_i|$ follows after taking suprema. Finally, taking $\lim_{\delta \rightarrow 0^+}$ above yields the result.
\end{proof}

\begin{proof}[Proof of Lemma \ref{lem:rotationPreservesDimH}]
	This follows straight from the lemma above: suppose $\dim_H(E) = s$, and assume $f$ is Lipschitz with constant $M > 0$. Then
	\[ 
		\hh^s(f(E)) \leq M^s\hh^s(E) < \infty.
	\]
	Since $\dim_H(E) = \sup \{ s \mid \hh^s(E) = \infty \}$ we have $\hh^s(f(E)) < \infty$. Therefore $\dim_H(f(E)) \leq s$, as needed.
\end{proof}

As a special (but extremely useful) case we note that if $f$ is an isometry then Hausdorff measure is in fact fixed. This yields that:

\begin{cor}\label{cor:rotationPreservesDimH}
Hausdorff dimension is preserved under isometries. In particular, it is preserved under rotation and translation.
\end{cor}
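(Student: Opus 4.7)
The plan is to deduce the corollary directly from Lemma \ref{lem:boundOnHausdorffMeasure} by exploiting the fact that isometries are invertible Lipschitz maps with constant $1$.

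First I would recall that an isometry $f \colon \rr^2 \to \rr^2$ satisfies $|f(x) - f(y)| = |x - y|$ for all $x,y$, so in particular $f$ is Lipschitz with constant $M = 1$. Applying Lemma \ref{lem:boundOnHausdorffMeasure} to $f$ then yields $\hh^s(f(E)) \leq \hh^s(E)$ for every $s \geq 0$.

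Next, observe that any isometry of $\rr^2$ is a bijection whose inverse $f^{-1}$ is again an isometry (rotations, reflections, and translations all have this property, and every isometry of $\rr^2$ is a composition of such). Therefore the same lemma applied to $f^{-1}$ and the set $f(E)$ gives
\[
\hh^s(E) = \hh^s\bigl(f^{-1}(f(E))\bigr) \leq \hh^s(f(E)).
\]
Combining the two inequalities shows $\hh^s(f(E)) = \hh^s(E)$ for every $s \geq 0$, so Hausdorff measure itself (not just dimension) is preserved. Taking the infimum over those $s$ for which $\hh^s(E) = 0$, as in Definition \ref{dfn:hausdorffDim}, then yields $\dim_H(f(E)) = \dim_H(E)$. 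Since rotations about the origin and translations are particular instances of isometries, the ``in particular'' clause follows immediately.

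There is no real obstacle here: the whole statement is essentially a two-line consequence of Lemma \ref{lem:boundOnHausdorffMeasure} once one notes that isometries are invertible with isometric inverses. The only mild care needed is to invoke the lemma twice (once for $f$, once for $f^{-1}$) in order to upgrade the inequality to an equality of Hausdorff measures.
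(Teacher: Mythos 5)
Your proof is correct and matches the approach the paper intends: the paper states (immediately before the corollary) that for an isometry the Hausdorff measure is in fact fixed, which is exactly what you establish by applying Lemma \ref{lem:boundOnHausdorffMeasure} twice with $M=1$, once to $f$ and once to $f^{-1}$. The passage from equality of measures for all $s\geq 0$ to equality of dimensions via Definition \ref{dfn:hausdorffDim} is exactly the intended step.
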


\subsection{Kolmogorov complexity}
Our main arguments rely on the notion of information density, which we describe in terms of \emph{prefix-free Kolmogorov complexity}. Standard, and very comprehensive, references for Kolmogorov complexity are Downey and Hirschfeldt \cite{downey} as well as Li and Vit\'{a}ny \cite{livitanyi}. A very short but very readable introduction is Fortnow \cite{fortnow}. We give a brief account of the most relevant notions and some history below.\\

We choose to express the basic notions of algorithmic information theory using Turing machines. In particular, we pick \emph{universal prefix-free} machine as our \emph{reference machine} $U$. Such a machine exists by construction: the meat of the argument lies in the standard result that every prefix-free p.c.\ (\emph{partial computable}) function has a prefix-free machine that computes it (e.g.\ \cite[3.5]{downey}).
Our universal machine $U$ satisfies the following: every prefix-free p.c.\ function $f \colon \fcant \rightarrow \fcant$ has a program code $p_f$ for which $U(p_f,x) = f(x)$. The machine $U$ has two tapes: an \emph{input tape} and a \emph{condition tape} (it also has a work tape, but this will be of no consequence to our arguments, so we shall ignore this fact). Define a p.c.\ function $h$ such that whenever $p_f$ is a prefix-free program for a prefix-free p.c.\ function $f$ then
\[
	h\left( 0^{|p_f|}1p_fx \right) = U(p_f,x) = f(x).
\]
Observe that $h$ is prefix-free. If $A \in \cant$ is an oracle, let $U^A$ be the universal prefix-free machine that has access to the oracle $A$ (this means, the machine can perform a step of the type ``does $k$ belong to $A$?'' for any $k < \omega$, and branch accordingly), and define $h_A$ analogously. Further, if $\tau \in \fcant$ then let $U^\tau$ be the machine with $\tau$ written on the condition tape.\\

We introduce the following notation: if $\sigma \in \fcant$ let $\len(\sigma)$ denote the length of $\sigma$. To avoid confusion with the absolute value on $\rr$ which we use heavily in later section, we reserve the symbol $|\cdot|$ for the space $\rr$.

\begin{dfn}
Let $\sigma \in \fcant$. The \emph{Kolmogorov complexity of $\sigma$} is
\begin{align*}
	K(\sigma) &= \min\set{\len(\rho)}{h(\rho) = \sigma}.
\intertext{If $\tau \in \fcant$ then the \emph{conditional Kolmogorov complexity} of $\sigma$ given $\tau$ is}
	K(\sigma \mid \tau) &= \min\set{\len(\rho)}{h_\tau(\rho) = \sigma}.
\end{align*}
If $A$ is an oracle, $K^A(\sigma)$ is defined analogously, with $h_A$ in place of $h_{\tau}$. In that case, the condition tape is empty (accessing the oracle does not require any tapes), and hence we can define $K^A(\sigma \mid \tau)$ as above.
\end{dfn}

From now on, we need to take the prefix-freeness of all machines into account. Note that all $\log$ in this paper are $\log_2$. As a result of prefix-freeness we immediately obtain that $K$ is \emph{subadditive up to a constant}: for all strings $\sigma,\tau \in \fcant$ we have $K(\sigma\tau) \leq K(\sigma) + K(\tau) + c$. Similarly, the following fact is basic yet useful.

\begin{lem}
	Let $\sigma \in \fcant$. Then $K(\sigma) \leq \len(\sigma) + 2\log(\len(\sigma)) + c$ for some constant $c$.
\end{lem}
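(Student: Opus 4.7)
The plan is to exhibit, for each $\sigma \in \fcant$, a prefix-free program of the claimed length that, when fed into the reference machine, outputs $\sigma$. Since $U$ is prefix-free, we cannot simply encode $\sigma$ literally: the machine must be able to tell where the description ends. So we need to prepend a self-delimiting encoding of the length $\len(\sigma)$.

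First, I would fix a prefix-free encoding of the natural numbers. For $n \in \omega$, let $b_1 b_2 \cdots b_k$ be its binary representation, where $k = \lfloor \log n \rfloor + 1$. Define $\hat n$ by doubling each bit and flipping the flag on the last pair: $\hat n = 0 b_1 \, 0 b_2 \cdots 0 b_{k-1} \, 1 b_k$. Then the map $n \mapsto \hat n$ has prefix-free range and satisfies $|\hat n| = 2k \leq 2\log(n) + c_0$ for some constant $c_0$.

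Next, I would define a prefix-free partial computable function $f \colon \fcant \to \fcant$ that, on input $\rho$, reads an initial segment of $\rho$ of the form $\hat n$ (possible by prefix-freeness of the encoding), then reads exactly the next $n$ bits to obtain a string $\tau$, and outputs $\tau$. This $f$ is prefix-free by construction. By the universality of $U$ there is a program $p_f$ with $U(p_f, x) = f(x)$ for all $x$, and the corresponding $h$-description has length $|p_f| + 1 + |x|$. Feeding $x = \widehat{\len(\sigma)} \, \sigma$ gives $f(x) = \sigma$, so
\begin{align*}
K(\sigma) \leq |p_f| + 1 + |\widehat{\len(\sigma)}| + \len(\sigma) \leq \len(\sigma) + 2\log(\len(\sigma)) + c
\end{align*}
for $c = |p_f| + 1 + c_0$, which is the desired bound.

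There is no real obstacle here; this is the textbook consequence of prefix-freeness, and the only thing to watch is that the length-encoding is itself self-delimiting so that $f$ knows where to stop reading the prefix before beginning to read $\sigma$. The factor of $2$ in front of $\log(\len(\sigma))$ arises precisely from this self-delimiting requirement (one can sharpen it by iterating logarithms, but the stated bound is what the lemma asks for).
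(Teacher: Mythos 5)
The paper states this lemma as a basic fact and gives no proof, so there is no in-paper argument to compare against. Your proof is the standard textbook argument and is essentially correct: you build a self-delimiting encoding $\widehat{n}$ of length $\leq 2\log n + O(1)$, prepend $\widehat{\len(\sigma)}$ to $\sigma$, and run a prefix-free machine that first decodes the length and then copies exactly that many bits. This yields the $\len(\sigma) + 2\log(\len(\sigma)) + O(1)$ bound, with the factor of $2$ coming precisely from the self-delimiting length header, as you note.

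One small bookkeeping slip: with the paper's convention $h\bigl(0^{\len(p_f)}1\,p_f\,x\bigr) = f(x)$, the $h$-description of $x$ under $f$ has length $2\len(p_f) + 1 + \len(x)$, not $\len(p_f) + 1 + \len(x)$ as you wrote. This only changes the additive constant $c$ and does not affect the stated bound, but it is worth being precise about which reference machine you are charging against. You may also want to say a word about the edge case $\len(\sigma) \in \{0,1\}$, where $\log(\len(\sigma))$ is $-\infty$ or $0$; the constant $c$ absorbs these finitely many cases, but the inequality as literally written needs that observation.
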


\subsection{Randomness}
We give a brief introduction to the basic notions of randomness. For in-depth reviews see Downey and Hirschfeldt \cite[6.2]{downey} and Li and Vit\'{a}nyi \cite[3.5]{livitanyi}. We give one important result here: fortunately, the two definitions given above are equivalent. (This is not the only equivalence: Martin-L\"of \cite{martinloef}, Schnorr \cite{schnorr}, Levin \cite{levin}, and Chaitin \cite{chaitin} all yield equivalent notions of randomness. This is strong evidence that the notion is, in fact, ``correct''.)\\

The central objects of discussion in Kolmogorov complexity on $\cant$ comprise \emph{random sequences}: elements of $\cant$ that defy short descriptions up to a constant for all of its initial segments. This definition goes back to Kolmogorov \cite{kolmogorov} and Solomonoff \cite{solomonoff} who simultaneously and independently developed the early theory of algorithmic information. Another definition of randomness for sequences uses statistical (measure-theoretic) tests, the Martin-L\"of-tests (or ML-tests), and goes back to Martin-L\"of \cite{martinloef}.

Recall that an open subset of $\cant$ is the countable union of basic clopen sets, and that every basic clopen set is determined by some $\sigma \in \fcant$. A set $U \subset \cant$ is \emph{(lightface) $\Sigma_1^0$}, or \emph{effectively open}, if the set of finite strings that determine $U$ is computably enumerable. A sequence $\mathcal{U} = \set{U_n}{n < \omega}$ of sets $U_n \subset \cant$ is \emph{uniformly $\Sigma^0_1$} if every $U_n$ is $\Sigma_1^0$ witnessed by some c.e.\ set $V_n \subset \fcant$, and the sequence $\mathcal{V} = \set{V_n}{n < \omega}$ is uniformly c.e.\ itself. Martin-L\"of randomness uses the natural Lebesgue measure generated by the clopen sets: if $ \sigma \in \fcant$ then $\lambda(\set{\sigma f}{f \in \cant}) = 2^{-\len(\sigma)}$. An ML-test is then a computably enumerable sequence of effectively open sets $U_n$ of decreasing diameter. A \emph{Martin-L\"of-random sequences} is not captured by any ML-test.

The following proposition connects the two notions.

\begin{prp}
	Let $f \in \cant$. The following notions are equivalent:
	\begin{enumerate}
	\item $f \in 2^{\omega}$ is Kolmogorov random: there exists a constant $c$ such that for all $n < \omega$ we have $K(f \upharpoonright n) \geq n - c$;
	\item $f$ is Martin-L\"of-random: for any uniformly $\Sigma_1^0$ sequence $\mathcal{U} = \set{U_n}{n < \omega}$ of sets $U_n \subset \cant$ for which $\lambda(U_n) \leq 2^n$ we have $f \not\in \bigcap \mathcal{U}$ (so $f$ \emph{passes the test}).
\end{enumerate}
\end{prp}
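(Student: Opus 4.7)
The plan is to prove the two implications separately, each by contrapositive, obtaining what is traditionally called the Schnorr--Levin theorem.

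For $(1) \Rightarrow (2)$, I assume $f$ fails some ML-test $\mathcal{U} = \set{U_n}{n < \omega}$, so that $f \in \bigcap_n U_n$. Since each $U_n$ is $\Sigma^0_1$, I fix a prefix-free c.e.\ set $V_n \subset \fcant$ with $U_n = \bigcup_{\sigma \in V_n} [\sigma]$ and $\sum_{\sigma \in V_n} 2^{-\len(\sigma)} = \lambda(U_n) \leq 2^{-n}$. For each $n$, some prefix $\sigma_n$ of $f$ lies in $V_n$, and specifying $\sigma_n$ by its position in the enumeration of $V_n$ costs at most $\len(\sigma_n) - n$ bits. Prepending an $O(\log n)$ prefix-free code for $n$, I build a prefix-free machine that produces $\sigma_n$ from at most $\len(\sigma_n) - n + O(\log n)$ bits, so that $K(f \upharpoonright \len(\sigma_n)) \leq \len(\sigma_n) - n + O(\log n)$. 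Since the shortfall $n - O(\log n)$ is unbounded in $n$, $f$ cannot be Kolmogorov random.

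For $(2) \Rightarrow (1)$, I assume $f$ is not Kolmogorov random, so that for every $c < \omega$ there exists $n$ with $K(f \upharpoonright n) < n - c$. Set $V_c = \set{\sigma \in \fcant}{K(\sigma) < \len(\sigma) - c}$, which is uniformly c.e.\ in $c$ because $K$ is upper semi-computable (enumerate halting computations of the universal prefix-free machine), and let $U_c = \bigcup_{\sigma \in V_c} [\sigma]$. Then $\set{U_c}{c < \omega}$ is uniformly $\Sigma^0_1$. By Kraft's inequality applied to the domain of $h$, $\sum_{\sigma \in \fcant} 2^{-K(\sigma)} \leq 1$; restricting to $\sigma \in V_c$ and using $2^{-\len(\sigma)} < 2^{-c} \cdot 2^{-K(\sigma)}$, I obtain $\lambda(U_c) \leq \sum_{\sigma \in V_c} 2^{-\len(\sigma)} < 2^{-c}$, so $\set{U_c}{c < \omega}$ is an ML-test. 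By assumption $f \in \bigcap_c U_c$, so $f$ fails it.

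The main obstacle is the careful bookkeeping in the first direction: the $O(\log n)$ overhead must be delivered by a genuinely prefix-free encoding of $n$, and the enumeration of $V_n$ must respect the prefix-freeness of $U_n$ (or one must first thin $V_n$ to an antichain before counting). Once these encoding technicalities are handled, both directions reduce to the same underlying fact, namely that Kraft's inequality constrains how many short descriptions a prefix-free machine can issue.
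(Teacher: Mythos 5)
The paper does not actually prove this proposition: it is stated as a known equivalence (the Schnorr--Levin theorem) and the reader is referred to Downey--Hirschfeldt and Li--Vit\'anyi, so there is no in-paper argument to compare against. Your proof is the standard Schnorr--Levin argument. One incidental note: the paper's statement has a typo, writing $\lambda(U_n) \leq 2^n$ where $2^{-n}$ is intended; you have correctly read it as $2^{-n}$.

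Your $(2) \Rightarrow (1)$ direction is correct as written: $V_c = \set{\sigma}{K(\sigma) < \len(\sigma) - c}$ is uniformly c.e.\ by upper semi-computability of $K$, and the bound $\lambda(U_c) \leq \sum_{\sigma \in V_c} 2^{-\len(\sigma)} < 2^{-c}\sum_{\sigma} 2^{-K(\sigma)} \leq 2^{-c}$ via Kraft's inequality is exactly right. Your $(1) \Rightarrow (2)$ direction has the right shape but one step is stated too loosely: the claim that ``specifying $\sigma_n$ by its position in the enumeration of $V_n$ costs at most $\len(\sigma_n) - n$ bits'' is not literally true. What the measure bound $\lambda(U_n) \leq 2^{-n}$ gives you is that $V_n$ contains at most $2^{\len(\sigma_n)-n}$ strings of length $\leq \len(\sigma_n)$, so the position of $\sigma_n$ among strings of its own length is bounded by $2^{\len(\sigma_n)-n}$; its position in the full (length-unrestricted) enumeration of $V_n$ has no such bound. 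Moreover, even after fixing the position bound, a prefix-free decoder must know when to stop reading the position bits, which requires either a self-delimiting code for $\len(\sigma_n)$ (whose $O(\log\len(\sigma_n))$ overhead is not controlled by $n$ and can spoil the shortfall estimate) or a direct appeal to the Kraft--Chaitin machine-existence theorem. The latter is the standard fix: request, for each $n$ and each $\sigma \in V_{2n}$, a code of length $\len(\sigma) - n$; the total weight is $\sum_n 2^n \lambda(U_{2n}) \leq \sum_n 2^{-n} \leq 2$, which after a harmless $+1$ adjustment is $\leq 1$, and Kraft--Chaitin then produces a single prefix-free machine $M$ with $K_M(\sigma) \leq \len(\sigma) - n + O(1)$ for all $\sigma \in V_{2n}$, giving $K(f \upharpoonright \len(\sigma_n)) \leq \len(\sigma_n) - n + O(1)$ and an unbounded shortfall. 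You flag the ``enumeration technicalities'' in your final paragraph, but the specific issue is not an encoding nicety you can defer; it is the point where the Kraft--Chaitin theorem is genuinely needed, and the argument as written does not supply it.
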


These results relativise (see \cite[6.4]{downey}).

\begin{dfn}
	Let $A \in \cant$ be an oracle. A string $f \in \cant$ is \emph{Kolmogorov random relative to $A$} if there exists a constant $c$ such that for all $n < \omega$ we have $K^A(f \upharpoonright n) \geq n-c$.
\end{dfn}

It is a standard result that a universal ML-test $\mathcal{V}$ exists, from which it follows immediately that the Kolmogorov random sequences have measure 1 in $\cant$: every ML-test is Lebesgue null, and hence $\cant \setminus \mathcal{V}$ has measure 1. If $f$ passes the universal test $\mathcal{V}$, it passes all ML-tests, and hence $\cant \setminus \mathcal{V}$ is the set of ML-randoms. Thus every $\sigma \in \fcant$ has a Kolmogorov random extension. Further, if $A \in \cant$ is an oracle, then we can construct a universal ML-test relative to $A$. Hence every $\sigma \in \fcant$ has a Kolmogorov random extension relative to $A$.

\subsection{Coding objects \texorpdfstring{in $\fcant$}{using finite strings}}\label{sec:codings}
While, formally, our arguments take place in $\fcant$, we naturally identify certain finite strings with objects in the domain of discourse; these are usually rational numbers (elements of $\qq$) and natural numbers (elements of $\omega$). As is common, this identification takes place in the meta-theory; however, determining whether a certain string is to be identified as a rational or natural number is computable. We normally denote the string representation of such objects using an overline: if $x$ is an object in the domain of discourse, then $\str{x}$ denotes the string which we shall identify as $x$.

Fixing a particular coding is illustrative---we code objects in the domain of discourse as follows. The implied operation on finite strings below is always concatenation. We remark that these codings are not necessarily optimal.
\begin{itemize}
	\item If $k < \omega$ then let $\str{k}$ be the string whose digits are given by  the binary expansion of $k$.
	\item If $n \in \mathbb{Z}$ then let $w$ be the binary expansion of $n$ with each digit doubled ($n=101$ becomes $w=110011$). Then let $\str{n} = w01$ if $n \geq 0$, and $\str{n} = w10$ otherwise.
	\item If $q \in \qq$ then suppose $q = a/b$. Then let $\str{q} = \str{a}\str{b}$.
	\item If $q = (q_1,\ldots,q_m) \in \qq^m$ then let $\str{q} = \str{q_1}\cdots\str{q_m}$.
	\item If $x \in \rr$, suppose $k < \omega$, and express $x$ in binary. Take the integer part of $x$ and double each digit; denote this string by $w$. Take the first $k$ bits of $x$ after the binary point, denoted by $z$. If $x \geq 0$, let $\str{x}[k] = w01z$; otherwise define $\str{x}[k] = w10z$. \label{item:realDef}
	\item If $x = (x_1,\ldots,x_m) \in \rr^m$, suppose $k < \omega$. Then let $\str{x}[k] = \str{x_1}[k]\cdots\str{x_m}[k]$.
	\item If $x \in \rr$ then let $\str{x} \in \cant$ be the limit of $\str{x}[k]$ in the obvious fashion. If $x = (x_1,\ldots,x_m) \in \rr^m$ then interweave $\str{x_1},\ldots,\str{x_m}$ bit by bit.
\end{itemize}

Observe that, using this coding, if $k < \omega$ then $\len(\str{k}) \leq \log(k) + 1$.

The distinction between strings and objects is particularly important when we discuss real numbers (i.e.\ objects in $\rr$), and their truncated approximations. In other cases we are more casual; for instance, we normally write $K(k)$ and $K(q)$ instead of the formally correct $K(\str{k})$ and $K(\str{q})$.

\subsection{Dimension of points and the point-to-set principle}
Using effective tools in order to answer geometrical and measure-theoretical questions has been an avenue for research for a couple of decades. This development goes back at least to the beginning of this century \cite{lutzGales,mayordomo,lutz2003}. Among others, further discoveries were made by Hitchcock \cite{hitchcockPhD, hitchcockPaper} and Mayordomo \cite{mayordomo}, who related the notion of effective dimension of reals (in $\cant$) away from the notion of gales towards Kolmogorov complexity (the connection between martingales and Hausdorff dimension had previously been investigated by Ryabko \cite{ryabko1,ryabko2}, Staiger \cite{staiger}, and Cai and Hartmanis \cite{caihart}; gales, a generalisation of martingales, are due to Lutz \cite{lutzGalesNew}). As a result of their work, we may now determine the Hausdorff dimension of a set solely form its elements, using randomness notions.

\begin{dfn}
	Let $f \in \cant$. We define the \emph{dimension of $f$} by
	\begin{align*}
		\dim(f) &= \liminf_{r \rightarrow \infty} \frac{K(f \upharpoonright r)}{r}
	\intertext{This relativises: if $A \in \cant$ is an oracle then define}
		\dim^A(f) &= \liminf_{r \rightarrow \infty} \frac{K^A(f \upharpoonright r)}{r}.
	\end{align*}
\end{dfn}
This notion can be naturally extended to Euclidean space: first, consider the complexity of a point.

\begin{dfn}\label{dfn:efffff}
	Let $x = (x_1,\ldots,x_m) \in \rr^m$. Then we define the \emph{Kolmogorov complexity of $x$ at precision $t < \omega$} by
	\begin{align*}
		K_t(x) &= \min\set{K(q)}{q \in \qq^m \cap B_{2^{-t}}(x)}\\
		\intertext{where $B_s(y)$ is the open ball with respect to the Euclidean metric, with radius $s$ and centre $y$. The \emph{effective Hausdorff dimension} of $x$ is then given by}
		\dim(x) &= \liminf_{t \rightarrow \infty} \frac{K_t(x)}{t}.
	\end{align*}
	Both of these notions relativise.
\end{dfn}

The ideas behind the definitions stem from effectivising Hausdorff dimension, an idea that is due to Lutz \cite{lutzGalesNew} and their notion of gales. An investigation of effective packing dimension followed suit \cite{athreyaHitchcockEtAl}, also in terms of gales. The characterisation of effective Hausdorff dimension of reals given in \cref{dfn:efffff} is due to Mayordomo \cite{mayordomo}. We also remark that replacing $\liminf$ by $\limsup$ in \cref{dfn:efffff} yields a characterisation of effective packing dimension \cite{lutzMayor2}, which is denoted by $\Dim(x)$.

We can now state Lutz' and Lutz' point-to-set principle, which forms a cornerstone of our subsequent arguments.

\begin{thm}[Point-to-set Principle, {\cite[Thm.\ 1]{kakeya}}]\label{thm:pts}
	Let $n<\omega$ and $E \subset \rr^n$. Then
	\[
		\dim_H(E) = \min_{A \in \cant} \sup_{x \in E} \dim^A(x).
	\]
\end{thm}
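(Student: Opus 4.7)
The plan is to establish the two inequalities separately. For $\dim_H(E) \leq \min_{A} \sup_{x \in E} \dim^A(x)$, I would show that \emph{every} oracle $A \in \cant$ satisfies $\dim_H(E) \leq \sup_{x \in E} \dim^A(x)$, exploiting the small count of low-$K^A$-complexity rationals to manufacture efficient covers of $E$. For the reverse inequality, I would construct a single optimal oracle $A^*$ that uniformly codes nearly-tight Hausdorff covers of $E$, and verify that $\dim^{A^*}(x) \leq \dim_H(E)$ for every $x \in E$; this simultaneously shows the minimum is attained.

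For the first direction, fix an arbitrary oracle $A$ and let $s > \sup_{x \in E} \dim^A(x)$ and $s' > s$. By \cref{dfn:efffff}, every $x \in E$ lies, for infinitely many precisions $t$, within $2^{-t}$ of some $q \in \qq^n$ with $K^A(q) \leq st$. Define
\[
V_t = \bigcup \Set{ B_{2^{-t}}(q) }{ q \in \qq^n,\ K^A(q) \leq st }, \qquad W_T = \bigcup_{t \geq T} V_t;
\]
then $E \subset W_T$ for each $T$, and Kraft's inequality for the prefix-free machine $U^A$ bounds the number of qualifying rationals at level $t$ by $O(2^{st})$. Summing diameters,
\[
\hh^{s'}_{2^{1-T}}(E) \;\leq\; \sum_{t \geq T} O(2^{st}) \cdot (2^{1-t})^{s'} \;=\; O(1) \sum_{t \geq T} 2^{(s-s')t},
\]
a geometric tail that vanishes as $T \to \infty$. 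Hence $\hh^{s'}(E) = 0$, so $\dim_H(E) \leq s'$, and letting $s', s$ decrease gives $\dim_H(E) \leq \sup_{x \in E} \dim^A(x)$.

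For the second direction, set $s_n = \dim_H(E) + 1/n$ for each $n < \omega$; by \cref{dfn:hausdorffDim}, $\hh^{s_n}(E) = 0$, so for every $k$ there is a cover $\mathcal{U}_{n,k}$ of $E$ by open rational balls with $\sum_{U \in \mathcal{U}_{n,k}} |U|^{s_n} < 2^{-k}$ (replacing arbitrary cover sets by circumscribed rational balls costs only a fixed multiplicative factor, which can be absorbed into $k$). Code the doubly-indexed family $(\mathcal{U}_{n,k})_{n,k < \omega}$ into a single oracle $A^* \in \cant$. The pivotal counting bound is that at most $2^{s_n(t+1)-k}$ balls in $\mathcal{U}_{n,k}$ have diameter in $[2^{-t-1}, 2^{-t})$, since their total $s_n$-mass is at most $2^{-k}$. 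Relative to $A^*$, the ball $B \in \mathcal{U}_{n,k}$ of diameter $\leq 2^{-t}$ containing $x$ may be addressed by its lexicographic index in this finite subfamily, using $s_n(t+1) - k + O(\log t)$ bits for fixed $n,k$. The centre of $B$ is a rational within $2^{-t}$ of $x$, so $K^{A^*}_t(x) \leq s_n t + O(\log t)$, whence $\dim^{A^*}(x) \leq s_n$; since $n$ is arbitrary, $\dim^{A^*}(x) \leq \dim_H(E)$ for all $x \in E$.

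\textbf{Main obstacle.} The delicate step is the counting-and-coding argument in the second direction: one must specify the chosen ball $B$ relative to $A^*$ using only $(s_n + o(1)) t$ bits, and this hinges on the $s_n$-mass bound forcing few balls of large diameter to appear in $\mathcal{U}_{n,k}$. Arranging $A^*$ as a single real uniformly encoding the doubly-indexed family, ensuring the additive overhead $O(\log t)$ is genuinely negligible against the $\liminf$ defining $\dim^{A^*}$, and checking that the passage to rational-ball covers preserves asymptotically tight $s$-mass bounds, are the principal bookkeeping hurdles; once they are handled, the two inequalities combine to yield the point-to-set principle.
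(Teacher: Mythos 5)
The paper does not prove this theorem: it is quoted directly from Lutz and Lutz \cite{kakeya}, so there is no in-paper proof to compare against. Your proposal is, in substance, a correct reconstruction of the original Lutz--Lutz argument. The first direction (Kraft-inequality counting of low-$K^A$-complexity rationals to build covers whose $s'$-mass is a vanishing geometric tail) is fine as stated, and the monotonicity of $\hh^{s'}_\delta$ in $\delta$ then forces $\hh^{s'}(E)=0$. In the second direction the plan is also right, but one point should be tidied: the precision $t$ appearing in ``$K^{A^*}_t(x) \leq s_n t + O(\log t)$'' is not freely chosen but is \emph{determined by} the diameter of the ball $B \in \mathcal{U}_{n,k}$ that contains $x$, so that $B$ really does lie in the finite subfamily of balls of diameter in $[2^{-t-1},2^{-t})$ that you index. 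You should also record explicitly that these witnessing precisions tend to infinity as $k$ does, since $|B|^{s_n} < 2^{-k}$ forces $|B| < 2^{-k/s_n}$ and hence $t > k/s_n - 1$; this is what supplies the infinitely many precisions needed for the $\liminf$ defining $\dim^{A^*}(x)$ to drop to $s_n$. As written, ``the ball $B \in \mathcal{U}_{n,k}$ of diameter $\leq 2^{-t}$'' mildly conflates a pre-chosen $t$ with the one the ball actually dictates, but with the intended reading the proposal is sound and yields both inequalities and the attainment of the minimum at $A^*$.
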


Using this theorem, we are now in a position to control the dimension of a set by focussing on individual points. Lutz and Lutz \cite{kakeya} and Lutz and Stull \cite{effectiveDimension} provide outlines and applications of the point-to-set principle. Recently, the point-to-set principle has been extended to arbitrary separable metric spaces \cite{marsExt}.

Crucial to our arguments in this paper is the following technical lemma, which allows us to work with finite strings instead of approximating rationals. As we will be working with sequences in Cantor space $\cant$ while talking about reals in $\rr$, a convenient identification is useful. Instead of moving back and forth between $\cant$ and $\rr$, we use an observation by Lutz and Stull \cite[p.\ 6]{effectiveDimension}.

\begin{lem}[{\cite[Corollary 2.4]{lutzStullPointsonLine}}]\label{lem:stringsNreals}
	For every $m < \omega$ there exists a constant $c$ such that for all $t < \omega$ and $x \in \rr^m$ we have
	\[
		|K_t(x) - K(\str{x}[t])| \leq K(t) + c.
	\]
\end{lem}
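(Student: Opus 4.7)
The plan is to prove both directions of the absolute-value inequality separately, each with slack $K(t) + O(1)$, and to combine them. The key point is that $\str{x}[t]$ and a shortest rational witness for $K_t(x)$ are two ways of recording the same information—roughly, the first $t$ fractional bits of $x$—and can be computed from each other given $t$, up to a constant number of boundary-case candidates.

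For the direction $K_t(x) \leq K(\str{x}[t]) + K(t) + c$, I would argue that from a shortest program for $\str{x}[t]$, together with a program for $t$ and knowledge of the fixed ambient dimension $m$, one can parse the string $\str{x}[t]$ into its per-coordinate blocks $\str{x_i}[t]$ (finding the first \enquote{01} or \enquote{10} after a doubled-digit block marks off the integer part of each coordinate, and $t$ tells us exactly how many fractional bits to read before moving on to the next coordinate). Reading off the dyadic rational $q^{\ast} \in \qq^m$ whose $i$-th coordinate is the integer part plus $.z_i$ gives $|q^{\ast} - x|_\infty \leq 2^{-t}$, so $|q^{\ast} - x| \leq \sqrt{m}\,2^{-t}$ in Euclidean norm. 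To land inside $B_{2^{-t}}(x)$, repeat the construction with $t' = t + O_m(1)$; this enlarges the string by only a constant number of bits and thus changes the complexity by $O_m(1)$. Combined with the prefix-free pairing inequality $K(y,z) \leq K(y) + K(z) + O(1)$, this yields the bound.

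For the harder direction $K(\str{x}[t]) \leq K_t(x) + K(t) + c$, the obstacle is that a rational $q$ within $2^{-t}$ of $x$ need not share the first $t$ fractional bits of $x$: if $x$ lies close to a dyadic boundary (e.g.\ $x = 0.100\cdots$ and $q = 0.0111\cdots$), those bits flip entirely. The decisive observation is that, given $q$ and $t$, only a \emph{constant} (depending on $m$ alone) number of candidate strings for $\str{x}[t]$ remain: in each coordinate, $\lfloor x_i \rfloor$ is one of at most two integers (namely $\lfloor q_i \rfloor$ or $\lfloor q_i \rfloor \pm 1$), a possible sign flip occurs only when $|q_i| \leq 2^{-t}$, and the first $t$ fractional bits correspond to one of at most three dyadic sub-intervals of length $2^{-t}$ that meet $[q_i - 2^{-t}, q_i + 2^{-t}]$. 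Multiplying across the $m$ coordinates leaves at most $c_m = O_m(1)$ candidate strings, and a constant $\lceil \log_2 c_m \rceil$ bits of advice pinpoint the correct one. Concatenating a shortest program for the minimizer $q$, a shortest program for $t$, and this constant advice (using prefix-free pairing) yields $K(\str{x}[t]) \leq K(q) + K(t) + O(1) = K_t(x) + K(t) + O(1)$.

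The main obstacle is this boundary-case analysis in the second direction; once one checks that the number of candidate truncations is bounded independently of $t$ and $x$ (depending only on the fixed $m$), the rest is routine prefix-free bookkeeping. Adding the two bounds gives $|K_t(x) - K(\str{x}[t])| \leq K(t) + c$ with a constant $c = c(m)$ as required.
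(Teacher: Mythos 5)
Your proof is essentially correct, and it reconstructs the argument that the paper only cites from Lutz and Stull (the paper states this as an imported result and does not prove it, remarking only that the proof rests on $\str{x}[t]$ being a $\sqrt{m}\,2^{-t}$-approximation to $x$). Your second direction is the cleaner of the two: the observation that a minimising rational $q$ at precision $t$, together with $t$ and $O_m(1)$ bits of advice, pins down $\str{x}[t]$ among a constant number of dyadic-boundary candidates is exactly the right counting argument and gives the prefix-free bound $K(\str{x}[t]) \leq K_t(x) + K(t) + O_m(1)$ cleanly.

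The first direction is right in substance but stated a little loosely. ``Repeat the construction with $t' = t + O_m(1)$; this enlarges the string by only a constant number of bits and thus changes the complexity by $O_m(1)$'' glosses over the fact that a program for $\str{x}[t]$ does not by itself yield $\str{x}[t']$: the $m\Delta$ fresh bits of $x$ are genuine new information, and to insert them into the right positions you also need $t$ (since $\str{x}[t]$ is a concatenation of per-coordinate blocks of $t$-dependent lengths). The clean way to say it: build a machine that, given a shortest program for $\str{x}[t]$, a shortest program for $t$, and $m\Delta$ hard-coded-length advice bits, parses $\str{x}[t]$ into coordinates, appends the advice bits, and outputs a rational $q^{**}$ with $|q^{**}-x| \le \sqrt{m}\,2^{-(t+\Delta)} < 2^{-t}$. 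This gives $K_t(x) \leq K(\str{x}[t]) + K(t) + O_m(1)$ with a single $K(t)$ term, which is what you want; if one instead tried to separately bound $K(\str{x}[t'])$ and then parse it anew, the $K(t)$ cost would appear twice. An alternative route, closer in spirit to the machinery the paper actually deploys elsewhere (\Cref{lem:LS31noConditioning}), is to note that $\str{x}[t]$ witnesses $K_{t-\Delta}(x) \leq K(\str{x}[t]) + K(t) + O(1)$ and then apply the Casey--Lutz precision-adjustment bound, but your direct advice-bit argument is equally valid and arguably more elementary.
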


The proof uses the fact that $x[t]$ provides a reasonable approximation to $x$, in the sense that its distance to $x$ is bounded by a function that depends on $m$. In \cref{sec:polar}, we provide a similar identification argument for polar coordinates; see \cref{prp:stringsNpolar}. We note an important corollary right here.

\begin{cor}\label{cor:dimApprox}
	If $m \geq 1$ and $x \in \rr^m$ then $\dim(x) = \liminf_{r \rightarrow \infty} \frac{K(\str{x}[r])}{r}$.
\end{cor}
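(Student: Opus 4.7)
The plan is to obtain the corollary as a direct consequence of \cref{lem:stringsNreals}, together with the fact that the complexity of a natural number grows at most logarithmically.

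First, I would fix $m \geq 1$ and $x \in \rr^m$, and invoke \cref{lem:stringsNreals} to obtain a constant $c$ such that for every $t < \omega$,
\[
    \left| K_t(x) - K(\str{x}[t]) \right| \leq K(t) + c.
\]
Dividing by $t$ gives
\[
    \left| \frac{K_t(x)}{t} - \frac{K(\str{x}[t])}{t} \right| \leq \frac{K(t) + c}{t}.
\]
The second step is to show the error term on the right vanishes as $t \to \infty$. Using the earlier bound $K(\sigma) \leq \len(\sigma) + 2\log(\len(\sigma)) + c'$ applied to $\sigma = \str{t}$, together with $\len(\str{t}) \leq \log(t) + 1$ from the chosen coding of natural numbers, we obtain $K(t) = O(\log t)$. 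Hence $(K(t) + c)/t \to 0$.

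Finally, since two sequences of non-negative reals whose difference tends to $0$ have the same $\liminf$, it follows that
\[
    \dim(x) = \liminf_{t \to \infty} \frac{K_t(x)}{t} = \liminf_{t \to \infty} \frac{K(\str{x}[t])}{t},
\]
which is the required identity (with $r$ in place of $t$). There is no real obstacle here: the entire content of the corollary is in \cref{lem:stringsNreals}, and this step only records that the $K(t)$ error is negligible at the scale $t$ used to compute effective dimension.
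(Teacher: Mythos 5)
Your proof is correct and is precisely the argument the paper leaves implicit: the corollary is stated directly after \cref{lem:stringsNreals} with no explicit proof, and the only content to supply is that $K(t)/t \to 0$, which you justify from the paper's own bound $K(\sigma) \leq \len(\sigma) + 2\log(\len(\sigma)) + c$ together with $\len(\str{t}) \leq \log(t) + 1$. The final step---that a vanishing difference preserves $\liminf$---is a standard fact and is applied correctly.
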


\section{Arguing in polar Coordinates}\label{sec:polar}

In the course of our constructions in both \cref{thm:firstThm,thm:secondThm}, it will be easier to work in polar coordinates than in Euclidean coordinates. A point $(x,y)$ in Euclidean space \emph{has polar coordinates} $(r,\theta)$ if and only if $x = r\cos\theta$ and $y = r\sin\theta$. We will restrict our attention to the first quadrant of the unit disc, which we denote by
\[
	\mathbb{D} = \Set{(x,y) \in \rr^2}{x,y \geq 0 \land \sqrt{x^2 + y^2} \leq 1}.
\]
Thus $r \in [0,1]$ and $\theta \in [0,\pi/2]$. Importantly, all points expressed in the proofs below are given in Euclidean coordinates. When we write $(r,\theta)$ we do \emph{not} mean $(r\cos\theta,r\sin\theta)$.

The following lemma can be considered an analogue to \cref{lem:stringsNreals}; its proof follows the proof of \cref{lem:stringsNreals} from \cite[Corollary 2.4]{lutzStullPointsonLine}.

\begin{prp}\label{prp:stringsNpolar}
	Suppose $(x,y) \in \mathbb{D}$ has polar coordinates $(r,\theta)$. Then
	\[
		\dim(x,y) = \dim(r,\theta).
	\]
	This argument relativises.
\end{prp}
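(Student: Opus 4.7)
The plan is to mimic the proof of \cref{lem:stringsNreals}: I will show that $|K_t(x,y) - K_t(r,\theta)|$ is bounded (up to a small shift in the precision variable $t$) by $K(t) + O(1)$, and then invoke \cref{cor:dimApprox} to conclude equality of the dimensions. The main engine is that the polar-to-Cartesian map $\Phi(r,\theta) = (r\cos\theta, r\sin\theta)$ and its (partial) inverse $\Phi^{-1}(x,y) = (\sqrt{x^2+y^2}, \arctan(y/x))$ are both computable and (locally) Lipschitz on $\mathbb{D}$, so rational approximations transfer back and forth across them.

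For the direction $\dim(x,y) \leq \dim(r,\theta)$, note that $\Phi$ is globally $1$-Lipschitz on $\mathbb{D}$: its Jacobian has operator norm $\max(1,r) \leq 1$. Given a rational pair $(q_r, q_\theta) \in \qq^2$ with $|(q_r, q_\theta) - (r,\theta)| < 2^{-t}$, I would compute, uniformly in $t$, a rational approximation $(p_x, p_y)$ of $\Phi(q_r, q_\theta)$ within $2^{-t}$. By the Lipschitz bound and the triangle inequality, $(p_x, p_y)$ lies within $2 \cdot 2^{-t}$ of $(x,y)$. Encoding the precision parameter $t$ in the program costs $K(t) + O(1)$ extra bits, yielding $K_{t+1}(x,y) \leq K_t(r,\theta) + K(t) + O(1)$.

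For the converse, $\Phi^{-1}$ is only \emph{locally} Lipschitz: its Jacobian has operator norm $\max(1, 1/r)$, so the Lipschitz constant blows up near the origin. However, for the fixed point $(r,\theta)$ with $r>0$, take $t$ large enough that $2^{-t} < r/3$; then any rational $2^{-t}$-approximation $(p_x, p_y)$ of $(x,y)$ satisfies $\sqrt{p_x^2+p_y^2} \geq r/2$, a region on which $\Phi^{-1}$ is $(2/r)$-Lipschitz. Computing a rational approximation of $\Phi^{-1}(p_x, p_y)$ to precision $2^{-t}$ then yields an approximation of $(r,\theta)$ at precision $2^{-(t-c)}$, where $c = O(\log(1/r))$. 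Since a fixed shift in the precision variable (and an additive $K(t)+O(1)$ term) leaves the $\liminf$ defining $\dim(\cdot)$ unchanged, I obtain $\dim(r,\theta) \leq \dim(x,y)$.

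The main obstacle is the singularity of $\Phi^{-1}$ at the origin: if $r = 0$, then $(x,y) = (0,0)$ has dimension $0$, while $\dim(0,\theta)$ could equal $\dim(\theta) > 0$ for an arbitrary angle. I would handle this by fixing the convention that $(0,0)$ has polar coordinates $(0,0)$, making polar representation well-defined on all of $\mathbb{D}$; alternatively, one can restrict attention to $(x,y) \neq (0,0)$, which is all that is used in the subsequent construction. Relativisation is automatic, since both $\Phi$ and $\Phi^{-1}$ are computable without appeal to an oracle, so the same argument with $K^A$ in place of $K$ goes through verbatim.
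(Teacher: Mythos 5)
Your proposal is correct and follows essentially the same route as the paper: both directions hinge on the Lipschitz behaviour of the coordinate-change maps (Lemma~\ref{lem:brlipschitz}), with the inverse map's singularity handled by restricting to points away from it, and a fixed precision shift plus $K(t)+O(1)$ slack absorbed by the $\liminf$ (the paper packages this shift via Lemma~\ref{lem:LS31noConditioning}). Your version is marginally sharper in pinning the Lipschitz constant of $\Phi$ to $1$ via the Jacobian and isolating the genuine singularity at the origin rather than the whole first axis; note only the small sign slip that $2\cdot 2^{-t}=2^{-(t-1)}$ gives $K_{t-1}(x,y)\leq K_t(r,\theta)+K(t)+O(1)$, not $K_{t+1}$, which of course changes nothing in the limit.
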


We provide proofs to both directions of \cref{prp:stringsNpolar} individually below. We require the following result, which is due to Casey and J.\ Lutz \cite{caseLutz}.

\begin{lem}\label{lem:LS31noConditioning}
	There exists a constant $c$ such that for all $m,s,\Delta s < \omega$ and all $x \in \rr^m$ we have
	\[
		K_s(x) \leq K_{s + \Delta s}(x) \leq K_s(x) + K(s) + c_m(\Delta s) + c
	\]
	where $c_m(\Delta s) = K(\Delta s) + m\Delta s + 2\log(\lceil \frac{1}{2}\log(m) \rceil + \Delta s + 3) + (\lceil \frac{1}{2}\log(m) \rceil + 3)m + K(m) + 2\log(m)$.
\end{lem}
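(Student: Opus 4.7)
The left inequality $K_s(x) \le K_{s+\Delta s}(x)$ is immediate from the definitions: since $2^{-(s+\Delta s)} \le 2^{-s}$, the ball $B_{2^{-(s+\Delta s)}}(x)$ is contained in $B_{2^{-s}}(x)$, so the minimum defining $K_{s+\Delta s}(x)$ is taken over a subset of the rationals used to compute $K_s(x)$. For the right-hand inequality, the plan is the standard ``upgrade a coarse rational by a short offset'' strategy: build a prefix-free machine $M$ that, on input the concatenation of prefix-free codes for $s, m, \Delta s$, a program $p$ for some rational $q$ witnessing $K_s(x) = K(q)$, and an integer offset vector $v \in \mathbb{Z}^m$, outputs the rational $q' = q + 2^{-k}v$ for a suitable dyadic scale $k$ chosen so that $q' \in B_{2^{-(s+\Delta s)}}(x)$. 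Invoking universality of $U$ then yields $K_{s+\Delta s}(x) \le K(q') \le K_s(x) + (\text{sum of remaining input lengths}) + c$.

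The geometric content lies in fixing $k = s + \Delta s + \lceil \tfrac{1}{2}\log m \rceil + 3$, so that the nearest point to $x$ on the grid $2^{-k}\mathbb{Z}^m$ lies within Euclidean distance $(\sqrt{m}/2)\cdot 2^{-k} \le 2^{-(s+\Delta s)}$; the extra $\lceil \tfrac{1}{2}\log m \rceil$ padding absorbs the $\sqrt{m}$ factor between max-norm grid spacing and Euclidean distance. Choosing $q'$ as any such grid point and setting $v = 2^k(q' - q)$, each entry of $v$ has integer magnitude at most
\[
2^k(|q - x| + |x - q'|) \le 2^k \cdot 2 \cdot 2^{-s} = 2^{\Delta s + \lceil \frac{1}{2}\log m \rceil + 4},
\]
so each coordinate of $v$ can be written using at most $\Delta s + \lceil \tfrac{1}{2}\log m \rceil + 3$ bits plus a sign bit, contributing the $m\Delta s + (\lceil \tfrac{1}{2}\log m \rceil + 3)m$ term of $c_m(\Delta s)$ across all $m$ coordinates.

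To finish, I assemble the input to $M$ by concatenating: a prefix-free code for $s$ (length $K(s)$); a prefix-free code for $\Delta s$ (length $K(\Delta s)$); a prefix-free code for $m$ (length $K(m)$); a length header of $2\log(\lceil \tfrac{1}{2}\log m \rceil + \Delta s + 3)$ bits so $M$ can parse each offset coordinate; a $2\log m$ header indicating the number of coordinates to read; the program $p$ for $q$ of length $K_s(x)$; and finally $v$. Summing the lengths and invoking universality of $U$ produces exactly $K_{s+\Delta s}(x) \le K_s(x) + K(s) + c_m(\Delta s) + c$, as required. I expect the main obstacle to be bookkeeping rather than substance: the geometry (the $\sqrt{m}$ padding) and the machine design are routine, but matching each logarithmic term in $c_m(\Delta s)$ to a specific prefix-free length header in the input encoding requires careful accounting of delimiters and Elias-style self-delimiting length codes.
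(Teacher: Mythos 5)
The paper does not actually prove this lemma; it imports it verbatim from Case and Lutz \cite{caseLutz}. Your proposal reconstructs what is essentially that standard argument: monotonicity of $K_s$ in $s$ for the left inequality (correct as you state it), and for the right inequality a prefix-free machine that upgrades a witness $q$ for $K_s(x)$ to a rational within $2^{-(s+\Delta s)}$ of $x$ by adding a short integer offset on a dyadic grid, with universality giving the final bound. So the route is the same as the cited proof, and the substance is right.

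Two bookkeeping slips need repair, and in a Kolmogorov-counting lemma whose whole content is the explicit bound $c_m(\Delta s)$ they are worth naming. First, $v = 2^k(q'-q)$ need not be an integer vector: the minimizing rational $q$ is arbitrary (possibly of huge denominator), so ``each entry of $v$ has integer magnitude at most \dots'' fails as written, and encoding a general rational offset is not $O(\Delta s)$ bits per coordinate. The fix is one line: let the machine first round $q$ to the nearest point $u$ of $2^{-k}\mathbb{Z}^m$ (computable from $q$ and $k$) and encode $v = 2^k(q'-u) \in \mathbb{Z}^m$; the extra $2^{-k-1}$ per coordinate is absorbed harmlessly. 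Second, your choice $k = s+\Delta s+\lceil \tfrac12\log m\rceil+3$ overshoots: with it, even your own magnitude bound $2^{\Delta s+\lceil \frac12\log m\rceil+4}$ needs $\Delta s+\lceil \tfrac12\log m\rceil+5$ bits per coordinate plus a sign, exceeding the per-coordinate budget $\Delta s+\lceil \tfrac12\log m\rceil+3$ in $c_m(\Delta s)$ by $\Theta(m)$ bits in total, which cannot be hidden in the universal constant $c$. Taking $k = s+\Delta s+\lceil \tfrac12\log m\rceil$ instead still places the nearest grid point within $\sqrt{m}\,2^{-k-1}\le 2^{-(s+\Delta s)-1}$ of $x$, and then $|v_i|\le 2^{\Delta s+\lceil \frac12\log m\rceil+1}$, i.e.\ $\Delta s+\lceil \tfrac12\log m\rceil+3$ bits per coordinate including the sign, matching $c_m(\Delta s)$ exactly. (Your width and coordinate-count headers are redundant, since the machine already knows $m$ and $\Delta s$ from their prefix-free codes, but they sit inside the allotted $2\log(\cdot)$ terms; and for the paper's application only the independence of $c_m(\Delta s)$ from $s$ matters, as it is used with $m=2$ and constant $\Delta t$.)
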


Observe that the term $c_m(\Delta s)$ does not depend on $s$.

Before we commence with the proofs of the directions of \cref{prp:stringsNpolar}, we require the following lemma. This is a standard result \cite[p.\ 151]{birkhoffRota}).

\begin{lem}\label{lem:brlipschitz}
	Suppose $C \subset \rr^2$ is compact and convex. If the function $f \colon C \rightarrow \rr^2$ sending $(x,y)$ to $f(x,y)$ is continuously differentiable on $C$ then it satisfies a Lipschitz condition on $C$.
\end{lem}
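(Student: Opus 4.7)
The plan is to exploit the compactness of $C$ to bound the derivative of $f$ uniformly, and then use convexity to connect any two points in $C$ by a line segment along which we can integrate.

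First I would note that since $f$ is continuously differentiable on $C$, the Jacobian $Df$ (viewed, say, via the operator norm on $2 \times 2$ matrices) is continuous on $C$. As $C$ is compact, this norm attains a maximum, so there exists $M \geq 0$ with $\|Df(p)\| \leq M$ for every $p \in C$. Strictly speaking, continuous differentiability on a closed set means $f$ extends to a continuously differentiable function on an open neighbourhood of $C$; by shrinking to a slightly larger compact set if necessary, the same bound is obtained.

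Next, fix any two points $p, q \in C$. By convexity, the segment $\gamma(t) = (1-t)p + tq$ lies in $C$ for all $t \in [0,1]$. Define $g \colon [0,1] \to \rr^2$ by $g(t) = f(\gamma(t))$. By the chain rule, $g$ is continuously differentiable with $g'(t) = Df(\gamma(t))(q-p)$, and hence $|g'(t)| \leq M|q-p|$ for every $t \in [0,1]$. Applying the fundamental theorem of calculus componentwise gives
\[
	f(q) - f(p) = g(1) - g(0) = \int_0^1 g'(t)\, dt,
\]
so that
\[
	|f(q) - f(p)| \leq \int_0^1 |g'(t)|\, dt \leq M|q-p|.
\]
This is the desired Lipschitz condition with constant $M$.

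There is no real obstacle here; the only subtlety worth flagging is the usual one about what ``continuously differentiable on a closed set'' means, which is routinely resolved by interpreting it as the restriction of a $C^1$ function on an open neighbourhood (or, equivalently, by requiring continuous one-sided partials at the boundary). Either convention yields a uniform bound on $\|Df\|$ over $C$, and the integration-along-the-segment argument goes through verbatim.
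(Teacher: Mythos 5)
Your argument is correct and is exactly the standard proof that the paper implicitly defers to by citing \cite[p.~151]{birkhoffRota} without giving details: bound $\|Df\|$ by $M$ using compactness, then for any $p,q \in C$ integrate $Df$ along the segment $\gamma(t)=(1-t)p+tq$ (which lies in $C$ by convexity) to get $|f(q)-f(p)| \le M|q-p|$. Your flag about the interpretation of ``continuously differentiable on a closed set'' is the right subtlety and is handled correctly; in the paper's application the map in question is defined and $C^1$ on all of $\mathbb{R}^2$ (or an open neighbourhood of the relevant compact set), so no extension issue arises.
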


The first halves of the proofs below follow the same argument as Lutz and Stull \cite[Lemma 2.3]{lutzStullPointsonLine}. We briefly observe that the map $(r,\theta) \mapsto (r\cos \theta, r\sin\theta)$ is continuously differentiable everywhere, and that $[0,1] \times [0,\pi/2]$ is of course compact and convex.

\begin{lem}[First half of \Cref{prp:stringsNpolar}]\label{lem:firstHalf}
	There exists a constant $c$ such that whenever $(x,y) \in \mathbb{D}$ has polar coordinates $(r,\theta)$ then for all $s < \omega$ we have
	\[
		K_s(x,y) \leq K(\str{r}[s]\str{\theta}[s]) + K(s) + c.
	\]
\end{lem}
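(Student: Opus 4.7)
The plan is to bound $K_s(x,y)$ by producing a short algorithm that outputs a rational point inside (a slight enlargement of) $B_{2^{-s}}(x,y)$, using a shortest description of $\str{r}[s]\str{\theta}[s]$ together with a description of $s$, and then absorbing the resulting constant-factor precision slack via \cref{lem:LS31noConditioning}.

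First I would fix the polar-to-Cartesian map $\Phi\colon (u,v) \mapsto (u\cos v, u\sin v)$ and apply \cref{lem:brlipschitz} on the compact convex domain $[0,1]\times[0,\pi/2]$ to extract a Lipschitz constant $M$ for $\Phi$. Given any program $p$ of length $K(\str{r}[s]\str{\theta}[s])$ that prints $\str{r}[s]\str{\theta}[s]$, I would observe that since the integer parts of $r \in [0,1]$ and $\theta \in [0,\pi/2]$ each admit one of finitely many constant-length codes, the length of the output string determines $s$; so I can unambiguously parse $\str{r}[s]$ and $\str{\theta}[s]$ from the output and decode rationals $\tilde r, \tilde \theta$ with $|\tilde r - r|, |\tilde \theta - \theta| \leq 2^{-s}$. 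Approximating $\Phi(\tilde r, \tilde \theta)$ to within $2^{-s}$ by a rational $q \in \qq^2$ (say, via truncated Taylor expansion of $\sin$ and $\cos$) then yields
\[
|q - (x,y)| \leq |q - \Phi(\tilde r, \tilde \theta)| + M \cdot |(\tilde r,\tilde \theta) - (r,\theta)| \leq (1 + M\sqrt{2}) \cdot 2^{-s},
\]
so $q \in B_{2^{-(s-c_0)}}(x,y)$ for the fixed constant $c_0 := \lceil \log(1+M\sqrt{2}) \rceil + 1$. This witnesses $K_{s-c_0}(x,y) \leq K(\str{r}[s]\str{\theta}[s]) + O(1)$, where the $O(1)$ term absorbs the auxiliary code that performs parsing, decoding, and the Taylor-series approximation.

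To close the remaining gap to $K_s$, I would invoke \cref{lem:LS31noConditioning} with $m = 2$ and $\Delta s := c_0$, which gives $K_s(x,y) \leq K_{s-c_0}(x,y) + K(s - c_0) + c_2(c_0) + c$. Combining with the preceding bound, and noting that $c_0$ and $c_2(c_0)$ are fixed constants while $K(s-c_0) \leq K(s) + O(1)$, produces the desired inequality. The main obstacle is exactly this precision mismatch: the Lipschitz estimate only delivers a $(1+M\sqrt{2}) \cdot 2^{-s}$ approximation, and bridging this constant factor back to precision $2^{-s}$ is what necessitates \cref{lem:LS31noConditioning} and accounts for the $K(s)$ summand in the stated bound. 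Everything else is a routine unpacking of the string coding from \cref{sec:codings} and the standard recipe for turning a short description into a short Kolmogorov program.
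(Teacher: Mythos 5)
Your proposal is correct and takes essentially the same approach as the paper: extract a Lipschitz constant $M$ for the polar-to-Cartesian map on $[0,1]\times[0,\pi/2]$, turn a shortest description of $\str{r}[s]\str{\theta}[s]$ (via Taylor approximation of $\sin$ and $\cos$) into a rational within $(1+M\sqrt{2})2^{-s}$ of $(x,y)$, and bridge the resulting constant precision shift back to $K_s(x,y)$ with \cref{lem:LS31noConditioning}, which is exactly where the $K(s)$ summand originates. The paper names the intermediate dyadic $(x[s],y[s])=((r_s\cos\theta_s)[s],(r_s\sin\theta_s)[s])$ explicitly and works with $\Delta t = \log(1+M\sqrt{2})$ without taking a ceiling, so your $c_0 = \lceil\log(1+M\sqrt{2})\rceil+1$ is, if anything, slightly more careful about the integrality required by \cref{lem:LS31noConditioning}.
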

\begin{proof}
By \cref{lem:brlipschitz}, the map translating polar into Cartesian coordinates satisfies a Lipschitz condition as $[0,1] \times [0,\pi/2]$ is compact and convex: there exists $M > 0$ such that if $(r,\theta),(r',\theta') \in [0,1] \times [0,\pi/2]$ then
\[
	|(r\cos\theta,r\sin\theta) - (r'\cos\theta',r'\sin\theta')| \leq M|(r,\theta) - (r',\theta')|. \tag{$*$}\label{eq:lipCond}
\]

Let $(r,\theta) \in [0,1] \times [0,\pi/2]$, and suppose $(x,y) = (r \cos \theta, r \sin \theta)$. We define the following:
\begin{itemize}
	\item let $r_s = r[s]$, and $\theta_s = \theta[s]$, the truncation of $r$ and $\theta$ to $s$ bits after the binary point;
	\item we will consider the approximation $r_s\cos\theta_s$ of $r\cos\theta$ (and similarly $r_s\sin\theta_s$ for $r\sin \theta$); however, this approximation will in general not yield a finite string; hence we define
	\[
	x[s] = (r_s\cos\theta_s)[s] \qquad \text{ and } \qquad y[s] = (r_s\sin\theta_s)[s]
	\]
	which are the truncations to $s$ bits after the binary point of such approximations.
\end{itemize}
These truncations allow us to approximate the point $(x,y)$ effectively:

\begin{claim}
	$(x[s],y[s]) \in B_{2^{-s}(1 + M\sqrt{2})}(x,y)$
\end{claim}

\begin{cproof}
Recall that $x = r\cos\theta$ and $y = r\sin\theta$, and that $x[s] = (r_s\cos\theta_s)[s]$; hence $|(x[s],y[s]) - (r_s\cos\theta_s,r_s\sin\theta_s)| \leq 2^{-s}$, by construction. Using the nomenclature introduced and the Lipschitz condition (\ref{eq:lipCond}) above we can compute the maximum error when $(x[s],y[s])$ approximates $(x,y)$:
\begin{align*}
	|(x[s],y[s]) - (x,y)| &= |(x[s],y[s]) - (r_s\cos\theta_s,r_s\sin\theta_s) + (r_s\cos\theta_s,r_s\sin\theta_s) - (x,y)|\\
	&\leq |(x[s],y[s]) - (r_s\cos\theta_s,r_s\sin\theta_s)| + |(r_s\cos\theta_s,r_s\sin\theta_s) - (x,y)|\\
	&\leq 2^{-s} + |(r_s\cos\theta_s,r_s\sin\theta_s) - (r\cos\theta,r\sin\theta)|\\
	&\leq 2^{-s} + M|(r_s,\theta_s) - (r,\theta)|\\
	&\leq 2^{-s} + M\sqrt{(r - r_s)^2 + (\theta - \theta_s)^2}\\
	&< 2^{-s} + M\sqrt{(2)2^{-2s}}\\
	&= 2^{-s} + \sqrt{2}M2^{-s}\\
	&= 2^{-s}(1 + M\sqrt{2})
\end{align*}
as required.
\end{cproof}

Observe that $2^{-t} = 2^{-s}(1 + M\sqrt{2})$ if and only if $t = s - \log(1+M\sqrt{2})$.
Therefore, if we can compute $(x[s],y[s])$ then we can compute $(x,y)$ at precision $t = s - \log(1+M\sqrt{2})$. Letting $\Delta t = \log(1 + M\sqrt{2})$ we hence have
\[
	K_{s-\Delta t}(x,y) \leq K(x[s],y[s]) \leq K(\str{x}[s]\str{y}[s]) + c'
\]
where $c'$ is the machine constant that turns the string representations $\str{x}[s]$ and $\str{y}[s]$ into the real approximations, and hence rationals, $x[s]$ and $y[s]$.

Now, $t + \Delta t = s$, so the right-hand side of \cref{lem:LS31noConditioning} implies
\begin{align*}
	K_s(x,y) &\leq K_{s - \Delta t}(x,y) + K(t) + c_2(\Delta t) + c\\
	&\leq K(\str{x}[s]\str{y}[s]) + c' + K(t) + c_2(\Delta t) + c
\end{align*}
where $c_2(\Delta t)$ is as in \cref{lem:LS31noConditioning} and hence does not depend on $t$, and thus not on $s$.


Finally, we prove the following claim:
\begin{claim}\label{claim:taylor}
	$K(\str{x}[s]\str{y}[s]) \leq K(\str{r}[s]\str{\theta}[s]) + c''$ for some constant $c''$.
\end{claim}
\begin{cproof}
	Recall that $r_s = r[s]$ and $\theta_s = \theta[s]$. Then $x[s] = (r_s\cos\theta_s)[s]$, which (as well as $y[s]$) is easily computable via approximations and Taylor's Theorem (multiplication and $\cos$ are obviously computable, with associated machine constant $c''$).
\end{cproof}


To recap, we established so far that
\begin{align*}
	K_s(x,y) &\leq K(\str{x}[s]\str{y}[s]) + c' + K(t) + c_2(\Delta t) + c.
\intertext{By the claim we now have}
	&\leq K(\str{r}[s]\str{\theta}[s]) + c' + c'' + K(t) + c_2(\Delta t) + c.
\end{align*}
Recall that $\Delta t = \log(1 + M\sqrt{2})$ which is constant. Further, $t = s - \Delta t$, and thus there exists a constant $c'''$ (which only depends on $\Delta t$, and hence only on $M$ and not on $s$) for which $K(t) = K(s - \Delta t) \leq K(s) + c'''$. Hence we have
\[
	K_s(x,y) \leq K(\str{r}[s]\str{\theta}[s]) + K(s) + d
\]
where $d = c' + c'' + c''' + c_2(\Delta t) + c$ as required.
\end{proof}

For the second half, we make the following brief observation. The argument below will focus on points in $\mathbb{D}$ that do not lie on the first axis; this is necessary for a bounding argument involving Lipschitz conditions: for each point $(x,y)$ not on the first axis, we can find a nice neighbourhood on which the coordinate transformation map from Euclidean to polar coordinates is nicely behaved. What about the points \emph{on} the first axis? There is nothing to do, for if $x \geq 0$, the polar coordinates and Euclidean coordinates of the point $(x,0)$ coincide. Hence \cref{prp:stringsNpolar} holds on the first axis trivially.

\begin{lem}[Second half of \Cref{prp:stringsNpolar}]\label{lem:secondLemma}
	There exists a constant $c$ such that whenever $(x,y) \in \mathbb{D}$ has polar coordinates $(r,\theta)$ then there exist $N_{(x,y)} < \omega$ and $\Delta < \omega$ such that if $s > N_{(x,y)}$ then
	\[
		K(\str{r}[s - \Delta]\str{\theta}[s - \Delta]) \leq K_s(x,y) + K(s) + c.
	\]
\end{lem}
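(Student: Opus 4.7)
The plan is to mirror the structure of Lemma~\ref{lem:firstHalf} in reverse, working with the inverse coordinate map $\phi\colon (x',y') \mapsto (r',\theta')$ instead of its forward counterpart. Since the Jacobian of $\phi$ is unbounded near the origin, no global Lipschitz constant on $\mathbb{D}$ is available; this is exactly why both $N_{(x,y)}$ and $\Delta$ are allowed to depend on the base point.

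The case $y = 0$ is immediate: here $r = x$ and $\theta = 0$, so $\str{r}[s-\Delta]\str{\theta}[s-\Delta]$ differs from $\str{x}[s-\Delta]$ by a fixed short suffix, and the inequality reduces to Lemma~\ref{lem:stringsNreals} applied to $x$, taking $\Delta = 0$.

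For $y > 0$, I choose a compact convex neighbourhood $C$ of $(x,y)$ contained in $\mathbb{D} \setminus \{(0,0)\}$. On $C$ the map $\phi$ is continuously differentiable---using whichever branch of the argument function is smooth near $(x,y)$---so Lemma~\ref{lem:brlipschitz} yields a Lipschitz constant $M' = M'_{(x,y)}$. Pick $N_{(x,y)}$ so that $B_{2^{-N_{(x,y)}}}(x,y) \subseteq C$ and set $\Delta = \lceil \log(M'+2) \rceil$. For $s > N_{(x,y)}$ and any rational $q \in \qq^2 \cap B_{2^{-s}}(x,y)$, the Lipschitz bound gives $|\phi(q) - (r,\theta)| \leq M' \cdot 2^{-s}$. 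A fixed machine, taking a shortest program for $q$ together with a description of $s$ as input, can compute via rational approximations to $\sqrt{\cdot}$ and $\arctan(\cdot)$ at precision $2^{-s}$ a rational $q''$ with $|q'' - \phi(q)| \leq 2^{-s}$; then $|q'' - (r,\theta)| \leq (M'+1) \cdot 2^{-s} \leq 2^{-(s-\Delta)}$, so
\[
  K_{s-\Delta}(r,\theta) \;\leq\; K(q'') \;\leq\; K_s(x,y) + K(s) + c_1.
\]
Invoking Lemma~\ref{lem:stringsNreals} at precision $s-\Delta$ and recalling that $\str{(r,\theta)}[s-\Delta] = \str{r}[s-\Delta]\str{\theta}[s-\Delta]$ yields $K(\str{r}[s-\Delta]\str{\theta}[s-\Delta]) \leq K_{s-\Delta}(r,\theta) + K(s-\Delta) + c_2$, and the claim follows upon absorbing $K(s-\Delta)$ into $K(s) + O(1)$ (possible since $\Delta$ is a fixed integer once $(x,y)$ is given).

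The main technical obstacle is the precision bookkeeping: each individual step---the Lipschitz transfer, and the translation between $K_{s-\Delta}$ and $K(\str{\cdot}[s-\Delta])$---threatens to introduce its own $K(s)$ overhead, so the argument must be packaged so that $s$ is encoded only once across the chain. This is done in the spirit of Lemma~\ref{lem:LS31noConditioning}, which served the analogous role in the forward direction; all remaining losses, including the dependence on $M'$ and on $\Delta$, are absorbed into the $(x,y)$-dependent constant $c$.
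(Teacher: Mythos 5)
Your approach is essentially the same as the paper's, modulo the order in which the two conversions are performed: you go from $K_s(x,y)$ directly to $K_{s-\Delta}(r,\theta)$ via the Lipschitz bound, and then invoke \cref{lem:stringsNreals} to pass to $K(\str{r}[s-\Delta]\str{\theta}[s-\Delta])$; the paper instead first passes from $K_s(x,y)$ to $K(\str{x}[s]\str{y}[s])$ (via an inline dyadic-counting argument) and only then applies the Lipschitz bound to the polar map. Your $y=0$ case, the choice of compact neighbourhood avoiding the origin, and the derivation $|q''-(r,\theta)| \leq (M'+1)2^{-s} \leq 2^{-(s-\Delta)}$ are all correct, and reusing \cref{lem:stringsNreals} rather than re-deriving the counting argument is a tidy economy.

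However, your chain of inequalities does not quite deliver the stated bound. You obtain $K_{s-\Delta}(r,\theta) \leq K_s(x,y) + K(s) + c_1$, and then \cref{lem:stringsNreals} contributes a \emph{second} term $K(s-\Delta)$. Writing $K(s-\Delta) \leq K(s) + O_{(x,y)}(1)$, the end result is
\[
K(\str{r}[s-\Delta]\str{\theta}[s-\Delta]) \leq K_s(x,y) + 2K(s) + c(x,y),
\]
with a coefficient $2$ on $K(s)$, whereas the lemma asserts coefficient $1$. You flag exactly this danger yourself (``each individual step \ldots\ threatens to introduce its own $K(s)$ overhead, so the argument must be packaged so that $s$ is encoded only once''), but the written chain does not actually implement the packaging: the machine that produces $q''$ consumes $K(s)$, and the witness of $K_{s-\Delta}(r,\theta)$ is an arbitrary rational that does not encode $s-\Delta$, so \cref{lem:stringsNreals} necessarily charges a further $K(s-\Delta)$. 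The fix is to bypass $K_{s-\Delta}(r,\theta)$ entirely: let the machine accept $(q, s, k)$ with $k$ a bounded index (as in the paper's dyadic-counting claim), compute $\phi(q)$ at precision $2^{-s}$, and \emph{output the truncated string} $\str{r}[s-\Delta]\str{\theta}[s-\Delta]$ directly. Since $K(k)=O(1)$, this yields $K(\str{r}[s-\Delta]\str{\theta}[s-\Delta]) \leq K_s(x,y)+K(s)+c(x,y)$. The paper sidesteps the issue differently, routing through $\str{x}[s]\str{y}[s]$, which encodes $s$ in its length and hence costs no second $K(s)$. (Both proofs share the harmless feature that the final constant $c$ depends on $(x,y)$ through $\Delta$; the statement's quantifier order suggests uniformity, but in the application --- taking $\liminf$ --- this dependence is irrelevant.)
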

\begin{proof}
	First, we make an approximating observation.
	
	\begin{claim}
	For $a \in \qq^2 \cap B_{2^{-r}}(x,y)$ we have $(x[r],y[r]) \in B_{2^{-r}(1 + \sqrt{2})}(a)$.
\end{claim}
\begin{cproof}
	By assumption, $|(x,y) - a| < 2^{-r}$, so by the triangle inequality we have
	\begin{align*}
		|(x[r],y[r]) - a| &\leq |(x[r],y[r]) - (x,y)| + |(x,y) - a|\\
		&= \sqrt{(x[r] - x)^2 + (y[r] - y)^2} + |(x,y) - a|\\
		&\leq \sqrt{2(2^{-2r})} + 2^{-r}\\
		&\leq 2^{-r}\sqrt{2} + 2^{-r}\\
		&= 2^{-r}(1 + \sqrt{2})
	\end{align*}
	as required.
\end{cproof}

Further, in the notation of Lutz and Stull \cite{lutzStullPointsonLine}, let $\mathcal{Q}_r^2 = \set{2^{-r}z}{z \in \mathbb{Z}^2}$ denote the set of $r$-dyadics. Observe that $r$-dyadics have at most $r$-many non-zero post-binary-point bits. It is easy to bound the number of $r$-dyadics in any open ball. In particular, we have:

\begin{claim}
       For any $a \in \mathbb{Q}^2$ and $r < \omega$, we have
       \[
       |\mathcal{Q}_r^2 \cap B_{2^{-r}(1 + \sqrt{2})}(a)| \leq (4(1 + \sqrt{2}))^2.
       \]
   \end{claim}
   \begin{cproof}
       Let $C_2$ be the square with side length $2(1+\sqrt{2})2^{-r}$ that is centred at $a$. It is clear that $B_{2^{-r}(1 + \sqrt{2})} \subset C_2$ and thus
       \[
           |\mathcal{Q}_r^2 \cap B_{2^{-r}(1 + \sqrt{2})}| \leq |\mathcal{Q}_r^2 \cap C_2|.
       \]
       Observe that $C_2$ has area $(2(1+\sqrt{2}))^22^{-2r}$. Now, if $x,y \in \mathcal{Q}_r^2$ and $x \neq y$ then $|x-y| \geq 2^{-r}$ (since the elements in $\mathcal{Q}_r^2$ have at most $r$-many non-zero post-binary-point bits). Hence consider a \emph{small square}: a square of side length $2^{-r}$. Such a small square has area $2^{-2r}$ and cannot contain more than $4$ $r$-dyadics: one on each of its vertices. Hence, dividing the area of $C_2$ by the area of a small square and multiplying by $4$ for each vertex gives an upper bound for the number of $r$-dyadics:
       \[
           |\mathcal{Q}^2_r \cap B_{2^{-r}(1 + \sqrt{2})}| \leq \frac{(2(1+\sqrt{2}))^2 2^{-2r}}{2^{-2r}}2^2 = (2(1+\sqrt{2}))^2 2^2 = (4(1 + \sqrt{2}))^2
       \]
       as required.
   \end{cproof}
   
	
	Let $M$ with program $P$ be a machine that does the following: on input $\pi = \pi_1\pi_2\pi_3$ if $h(\pi_1) = \str{k}$ with $k < \omega$, and $h(\pi_2) = \str{t}$ with $t < \omega$, and $h(\pi_3) = \str{a}$ with $a = (p,q) \in \qq^2$, then $M$ outputs the $k$-th dyadic rational in $B_{2^{-t}(1 + \sqrt{2})}(a)$. Suppose $a \in \qq^2$ witnesses the complexity of $K_s(x,y)$; then the claims together imply that $(x[s],y[s])$ is the $k$-th element in $\mathcal{Q}_r^2 \cap B_{2^{-r}(1 + \sqrt{2})}(a)$ for some $k < (4(1 + \sqrt{2}))^2$. Let the programs $\pi_1,\pi_2,\pi_3$ be witnesses for $K(k),K(s)$ and $K(a) = K_s(x,y)$, respectively. Then
	\[
		h\left( 0^{\len(P)}1P\pi_1\pi_2\pi_3 \right) = \str{x}[s]\str{y}[s]
	\]
	and thus
	\begin{align*}
		K(\str{x}[s]\str{y}[s]) &\leq \len(\pi_1) + \len(\pi_2) + \len(\pi_2) + c\\
		&= K(k) + K(s) + K_s(x,y) + c'\\
		&\leq K_s(x,y) + K(s) + c
	\end{align*}
	where $K(k)$ can be bounded above and hence only contributes a constant term.
	
	Let $f \colon \rr^2 \rightarrow \rr^2$ be the computable function mapping a point in Euclidean coordinates to its polar coordinates. On $\mathbb{D}$ (excluding the first axis), this map is given by $(x,y) \mapsto \left( \sqrt{x^2 + y^2}, \tan^{-1}(y/x) \right)$, and is continuously differentiable. Hence take some $\epsilon > 0$ such that the closed ball $B$ of radius $\epsilon$ centred at $(x,y)$ does not intersect the first axis. By \cref{lem:brlipschitz}, the map $f$ satisfies a Lipschitz condition on $B$. Now suppose $s < \omega$ is such that $2^{-s} < \epsilon$; thus $B_{2^{-s}}(x,y) \subset B$. Suppose $(p,q) \in \qq^2 \cap B_{2^{-s}}(x,y)$. Recalling that $f(x,y) = (r,\theta)$ we have
	\begin{align*}
		|(r,\theta) - f(p,q)| &= |f(x,y) - f(p,q)|\\
		&\leq M|(x,y) - (p,q)|\\
		&\leq M2^{-s}\\
		&= 2^{-(s - \log M)}.
	\end{align*}
	
	Thus, computing $(x[s],y[s])$ yields, after applying the machine that computes $f$ with machine constant $c''$ (compare with \cref{claim:taylor} of the proof of \cref{lem:firstHalf}), the polar coordinates $(r,\theta)$ up to precision $s - \log M$. In other words,
	\begin{align*}
		K(\str{r}[s - \log M]\str{\theta}[s - \log M]) &\leq K(\str{x}[s]\str{y}[s]) + c''\\
		&\leq K_s(x,y) + K(s) + c'' + c
	\end{align*}
	as needed.
\end{proof}

\begin{proof}[Proof of \Cref{prp:stringsNpolar}]
	The proof is an easy consequence of the previous two lemmas and the following claim, which is easily seen to be true.
	\begin{claim}
		If $\Delta < \omega$ then $|K(\str{r}[s]\str{\theta}[s]) - K(\str{r}[s - \Delta]\str{\theta}[s - \Delta])| \leq c$ for some constant $c$.
	\end{claim}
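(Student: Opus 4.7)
The plan is to establish the inequality by handling each direction with a short prefix-free machine, exploiting that $\Delta$ is a constant in $s$. A preliminary observation is that the codings $\str{r}[s]$ and $\str{\theta}[s]$ from \cref{sec:codings} are self-delimiting: doubling the bits of the integer part guarantees that the $01$ (or $10$) separator can be located unambiguously, and once the integer part has been identified the post-binary-point block is the remainder. Hence, from any concatenation $\str{r}[s]\str{\theta}[s]$, a fixed machine can recover $r$, $\theta$, and the precision $s$.

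For the upper bound $K(\str{r}[s-\Delta]\str{\theta}[s-\Delta]) \leq K(\str{r}[s]\str{\theta}[s]) + c$, I would fix a prefix-free p.c.\ machine $M_\Delta$ that, on input $\str{r}[s]\str{\theta}[s]$, parses the two coordinates, deletes the last $\Delta$ bits from each post-binary-point block, and outputs the concatenation of the truncated strings. Since $\Delta$ is hard-coded into $M_\Delta$, it has a single fixed program, and the additional cost beyond $K(\str{r}[s]\str{\theta}[s])$ is an $s$-independent constant.

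For the reverse direction $K(\str{r}[s]\str{\theta}[s]) \leq K(\str{r}[s-\Delta]\str{\theta}[s-\Delta]) + c$, let $\rho_s$ denote the concatenation of the two length-$\Delta$ blocks that must be appended to the post-binary-point parts of $\str{r}[s-\Delta]$ and $\str{\theta}[s-\Delta]$ in order to recover $\str{r}[s]\str{\theta}[s]$. A second fixed machine consumes a pair (program for $\str{r}[s-\Delta]\str{\theta}[s-\Delta]$, program for $\rho_s$), parses the shorter coding, splices in the extra bits at the correct positions, and outputs the longer coding. By subadditivity of prefix-free Kolmogorov complexity,
\[
K(\str{r}[s]\str{\theta}[s]) \leq K(\str{r}[s-\Delta]\str{\theta}[s-\Delta]) + K(\rho_s) + O(1).
\]
Although $\rho_s$ itself varies with $s$, it has the fixed length $2\Delta$, so the generic length bound gives $K(\rho_s) \leq 2\Delta + 2\log(2\Delta) + O(1)$, which is a constant depending only on $\Delta$.

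I do not anticipate a genuine obstacle: the only point requiring care is that the parsing of $\str{r}[k]\str{\theta}[k]$ succeeds uniformly in $k$, and this is immediate from the doubled-integer-bit convention. Combining the two one-sided estimates with a suitable $c$ covering both machine constants yields the claim, which, together with \cref{lem:firstHalf,lem:secondLemma}, \cref{lem:stringsNreals}, and the fact that $K(s)/s \to 0$, delivers the two inequalities $\dim(x,y) \leq \dim(r,\theta)$ and $\dim(r,\theta) \leq \dim(x,y)$ needed to complete the proof of \cref{prp:stringsNpolar}.
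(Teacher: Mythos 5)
Your proof is correct and follows essentially the same route as the paper: one fixed machine truncates the last $\Delta$ bits of each coordinate for the easy direction, and for the reverse the missing $O(\Delta)$ bits are supplied as a second prefix-free program whose complexity is bounded by the generic length estimate, yielding a constant in $s$. The only cosmetic difference is that you package the two appended length-$\Delta$ blocks into a single string $\rho_s$, whereas the paper supplies $\str{r}(\Delta)$ and $\str{\theta}(\Delta)$ as two separate prefix-free programs; both variants give the same $\Delta$-dependent constant.
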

	\begin{cproof}
		It is easy to compute $\str{r}[s - \Delta]\str{\theta}[s - \Delta]$ from $\str{r}[s]\str{\theta}[s]$. For the other direction, let $\str{r}(\Delta)$ be such that $\str{r}[s] = \str{r}[s - \Delta]\str{r}(\Delta)$, and equally for $\str{\theta}$. Suppose $h(\pi_1) = \str{r}[s - \Delta]\str{\theta}[s - \Delta]$, and $h(\pi_2) = \str{r}(\Delta)$ and $h(\pi_3) = \str{\theta}(\Delta)$, and all such programs are optimal. Let $p$ be a program that on input $\pi = \pi_1\pi_2\pi_3$, merges the two strings obtained by $\pi_2$ and $\pi_3$ with the string from $\pi_1$ in the obvious way (recall the coding from \cref{sec:codings}). Then
		\[
			h\left( 0^{\len(p)}1p\pi_1\pi_2\pi_3 \right) = \str{r}[s]\str{\theta}[s]
		\]
		and thus
		\begin{align*}			
			K(\str{r}[s]\str{\theta}[s]) &\leq \len(\pi_1) + \len(\pi_2) + \len(\pi_3) + c\\
			&= K(\str{r}[s - \Delta]\str{\theta}[s - \Delta]) + K(\str{r}(\Delta)) + K(\str{\theta}(\Delta)) + c
		\intertext{by optimality. Observe that $\len(\str{r}(\Delta)) = \Delta$, and recall that $K(\sigma) \leq \len(\sigma) + 2\log(\len(\sigma)) + c'$ if $\sigma \in \fcant$; thus}
			K(\str{r}[s]\str{\theta}[s]) &\leq K(\str{r}[s - \Delta]\str{\theta}[s - \Delta]) + 2\len(\str{\Delta}) + 4\log(\len(\str{\Delta})) + c
		\end{align*}
		which, since $\Delta < \omega$ is fixed, is as required.
	\end{cproof}
	
	The claim now yields the result from the previous two lemmas: let $(x,y) \in \mathbb{D}$ with polar coordinates $(r,\theta)$. Suppose $\Delta$ is as in \cref{lem:secondLemma}. Then
	\begin{align*}
		\dim(r,\theta) &= \liminf_{s \rightarrow \infty} \frac{K_s(r,\theta)}{s}\\
		&= \liminf_{s \rightarrow \infty} \frac{K(\str{r}[s]\str{\theta}[s])}{s}\\
		&= \liminf_{s \rightarrow \infty} \frac{K(\str{r}[s - \Delta]\str{\theta}[s - \Delta])}{s}\\
		&= \liminf_{s \rightarrow \infty} \frac{K_s(x,y)}{s}\\
		&= \dim(x,y)
	\end{align*}
	using the fact that $K(s) \leq \log(s) + 2\log(\log(s) + 1) + c$ for some constant.
\end{proof}

We may now pass to polar coordinates as required. In particular, the points of the co-analytic sets we build in \cref{thm:firstThm} will be determined by their radius, which we will construct explicitly.

From now on, if we write $(r,\theta)$ below, we usually mean the point that has Euclidean coordinates $(r\cos\theta,r\sin\theta)$; in such cases, $(r,\theta) \in \mathbb{D}$. We will occasionally return to Euclidean coordinates, however, and we will explicitly mention when we do so.


\subsection{Projections in polar coordinates}
The focus of this paper is on projections of points onto straight lines. We make some simple geometric observations below that will simplify arguments later on. Consider $\theta \in [0,\pi]$, and let $L_{\theta}$ be the straight line that passes through the origin at angle $\theta$ with the first coordinate axis. It is clear that $[0,\pi]$ exhausts all straight lines through the origin. Let $(s,\rho) \in \mathbb{D}$ and denote by $\proj_{\theta}(s,\rho)$ the \emph{projection of $(s,\rho)$ onto $L_{\theta}$}: the unique point of intersection of $L_\theta$ with the unique perpendicular-to-$L_\theta$ line containing $(s,\rho)$. Recall that if $(s,\rho) \in \mathbb{D}$ then $0 \leq s \leq 1$. See \cref{fig:manyProjections}.

There are two cases: either $|\theta - \rho| \leq \pi/2$ or $|\theta - \rho| > \pi/2$. If $|\theta - \rho| \leq \pi/2$ then the length of the projection is given by $|\proj_{\theta}(s,\rho)| = s\cos(\theta - \rho)$; otherwise $|\proj_{\theta}(s,\rho)| = s\cos((\theta + \pi) - \rho)$. Since $\cos(x + \pi) = -\cos(x)$ and $0 \leq s \leq 1$, we conclude:

\begin{lem}\label{lem:polarCoords}
For every $(s,\rho) \in \mathbb{D}$ and every $\theta \in [0,\pi]$ we have
\[
	|\proj_{\theta}(s,\rho)| = s \lvert\cos(\theta - \rho)\rvert.
\]
	In particular, the polar coordinates of the projection of $(s,\rho)$ onto $L_{\theta}$ are
\begin{align*}
	\proj_{\theta}(s,\rho) =
		\begin{cases}
			(s\lvert\cos(\theta - \rho)\rvert, \theta) & \text{if $|\theta - \rho| \leq \pi/2$}\\
			(s\lvert\cos(\theta - \rho)\rvert, \theta + \pi) & \text{otherwise.}
		\end{cases}
\end{align*}
\end{lem}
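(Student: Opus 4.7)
The plan is to compute the projection directly using the inner-product formula in Euclidean coordinates and then convert back to polar form. First I would express the point with polar coordinates $(s,\rho)$ as the Euclidean vector $P = (s\cos\rho, s\sin\rho)$, and note that the line $L_\theta$ through the origin at angle $\theta$ has unit direction vector $\hat{u}_\theta = (\cos\theta, \sin\theta)$. Since orthogonal projection onto a line through the origin is given by the inner product with a unit direction vector, we have
\[
\proj_\theta(s,\rho) = (P \cdot \hat{u}_\theta)\, \hat{u}_\theta.
\]

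Second, I would expand the scalar factor using the cosine angle-subtraction identity:
\[
P \cdot \hat{u}_\theta = s\cos\rho\cos\theta + s\sin\rho\sin\theta = s\cos(\theta - \rho).
\]
Thus the projection vector equals $s\cos(\theta-\rho)\,(\cos\theta,\sin\theta)$, whose Euclidean length is $|s\cos(\theta-\rho)| = s|\cos(\theta-\rho)|$ since $s \geq 0$. This establishes the length formula of \cref{lem:polarCoords}.

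Third, to identify the polar coordinates I would do a case split on the sign of $\cos(\theta-\rho)$. When $\cos(\theta-\rho) \geq 0$, the projection is a non-negative scalar multiple of $\hat{u}_\theta$, so its polar angle is $\theta$, yielding $(s|\cos(\theta-\rho)|,\theta)$. When $\cos(\theta-\rho) < 0$, the projection is a non-negative multiple of $-\hat{u}_\theta = (\cos(\theta+\pi), \sin(\theta+\pi))$, yielding polar angle $\theta+\pi$. Since $\theta \in [0,\pi]$ and $\rho \in [0,\pi/2]$ (because $(s,\rho) \in \mathbb{D}$), we have $\theta - \rho \in [-\pi/2, \pi]$, and on this range $\cos(\theta - \rho) \geq 0$ precisely when $|\theta - \rho| \leq \pi/2$. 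This matches the case boundary in the statement. The argument is entirely elementary; there is no substantive obstacle beyond carefully tracking the sign of the cosine and verifying that the range constraints on $\theta$ and $\rho$ make the split $|\theta-\rho| \lessgtr \pi/2$ coincide with the sign of $\cos(\theta-\rho)$.
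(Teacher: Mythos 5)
Your proposal is correct and follows essentially the same route as the paper: both compute the projection length as $s\lvert\cos(\theta-\rho)\rvert$ via the cosine of the angle between the point's ray and $L_\theta$, and both split on whether $|\theta-\rho| \leq \pi/2$ (equivalently on the sign of $\cos(\theta-\rho)$) to decide whether the projection lands on the $\theta$ or $\theta+\pi$ ray. The only cosmetic difference is that you derive the scalar factor explicitly via the inner-product formula and the angle-subtraction identity, whereas the paper reads it off a figure and uses $\cos(x+\pi)=-\cos(x)$ to merge the two cases.
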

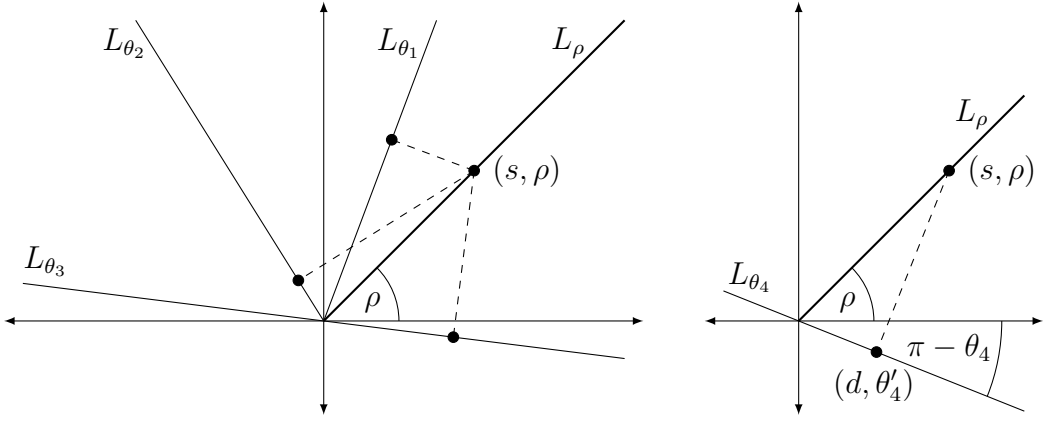
\begin{figure}%
    \centering

\begin{tikzpicture}

\draw[>=latex,<->] (-4.25,0) -- (4.25,0); 
\draw[>=latex,<->] (0,-1.25) -- (0,4.25); 

\draw[thick] (0,0) -- (45:{sqrt(32)});
\node at (3.25,3.7) {$L_{\rho}$};

\filldraw (45:{sqrt(8)}) circle (2pt);
\node at (2.7,2) {$(s,\rho)$};

\draw (0,0) -- (69.44:4.272);
\node at (1,3.7) {$L_{\theta_1}$};

\draw (0,0) -- (122:4.717);
\node at (-2.65,3.7) {$L_{\theta_2}$};

\draw (0,0) -- (-7.12502:4.03113);
\draw (0,0) -- (172.875:4.03113);
\node at (-3.7,0.8) {$L_{\theta_3}$};

\draw[dashed] (45:{sqrt(8)}) -- (69.44:2.575);
\filldraw (69.44:2.575) circle (2pt);

\draw[dashed] (45:{sqrt(8)}) -- (122:0.63626);
\filldraw (122:0.63626) circle (2pt);

\draw[dashed] (45:{sqrt(8)}) -- (-7.12502:1.736486);
\filldraw (-7.12502:1.736486) circle (2pt);

\node at (0.65,0.25) {$\rho$};
\draw (1,0) arc (0:45:1);

\end{tikzpicture}
\hspace{0.8cm}%
\begin{tikzpicture}

\draw[>=latex,<->] (-1.25,0) -- (3.25,0); 
\draw[>=latex,<->] (0,-1.25) -- (0,4.25); 

\draw[thick] (0,0) -- (45:{sqrt(18)});
\node at (2.3,2.75) {$L_{\rho}$};

\filldraw (45:{sqrt(8)}) circle (2pt);
\node at (2.7,2) {$(s,\rho)$};

\draw (-1,0.4) -- (3,-1.2);
\node at (-0.65,0.58) {$L_{\theta_4}$};

\draw[dashed] (45:{sqrt(8)}) -- (-21.8014:1.11417);

\node at (0.65,0.25) {$\rho$};
\draw (1,0) arc (0:45:1);

\node at (2,-0.35) {$\pi - \theta_4$};
\draw (-21.8014:2.7) arc (-21.8014:0:2.7);

\filldraw (-21.8014:1.11417) circle (2pt);
\node at (1,-0.85) {$(d,\theta'_4)$};

\end{tikzpicture}
\caption{With $(s,\rho) \in L_{\rho}$, the projections onto lines with angles $\theta_1,\theta_2$ are straightforward. For $\theta_3,\theta_4$, the projections meet in the fourth quadrant. There, $\theta'_4 = \pi - \theta_4$, and the definition of $\cos$ yields $d = s\cos(\rho + \pi - \theta_4) = s \lvert\cos(\theta_4 - \rho)\rvert$.}
\label{fig:manyProjections}
\end{figure}%
Now suppose $E \subset \mathbb{D}$ and fix some $\theta \in [0,\pi]$. Define
\[
	E(\theta) = \set{s\lvert\cos(\theta - \rho)\rvert}{(s,\rho) \in E} \subset \rr.
\]
We show below that, in fact, $\dim_H(E(\theta)) = \dim_H(\proj_{\theta}(E))$.

We need the following notions: a real number $x \in \rr$ is \emph{computable} if there exists a machine that uniformly on input $k < \omega$ (or rather $\str{k}$) outputs a rational $q \in \qq$ (or rather $\str{q}$) such that $q \in B_{2^{-k}}(x)$; this naturally extends to $\rr^m$ for $m \geq 1$.

\begin{lem}\label{lem:computableRealZeroDim}
Let $m\geq 1$. Every computable real $x \in \rr^m$ has dimension $0$.
\end{lem}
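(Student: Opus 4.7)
The plan is to unwind the definitions and use the universal prefix-free machine construction from the preliminaries. By assumption, there is a Turing machine $M$ which, on input $\str{k}$, outputs $\str{q_k}$ for some $q_k \in \qq^m \cap B_{2^{-k}}(x)$. Let $p_M$ be a prefix-free program for $M$ with respect to the reference machine $U$. Then feeding $U$ the concatenation $0^{\len(p_M)}1 p_M \pi_k$, where $\pi_k$ is an optimal description of $\str{k}$, yields $\str{q_k}$; hence
\[
    K(q_k) \leq \len(p_M) + 1 + \len(\pi_k) + O(1) = K(k) + c
\]
where $c$ depends only on $M$ (not on $k$).

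Since $q_k \in \qq^m \cap B_{2^{-k}}(x)$ is a legitimate witness for $K_k(x)$, we obtain $K_k(x) \leq K(k) + c$ for every $k < \omega$. Recalling the standard bound $K(k) \leq \log(k) + 2\log(\log(k) + 1) + O(1)$ from the preliminaries, we conclude
\[
    \dim(x) = \liminf_{k \to \infty} \frac{K_k(x)}{k} \leq \liminf_{k \to \infty} \frac{\log(k) + 2\log(\log(k) + 1) + O(1)}{k} = 0,
\]
and since $\dim(x) \geq 0$ by definition, equality follows.

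There is essentially no obstacle here; the argument is a direct transcription of ``computable implies low Kolmogorov complexity at every precision.'' The only thing one has to be a little careful about is passing from the informal ``a machine computing $x$'' to an honest program code that can be fed to the fixed universal prefix-free machine $U$, which is handled by the standard $0^{\len(p_M)}1 p_M$ prefix-free encoding already set up in the Kolmogorov complexity section of the preliminaries.
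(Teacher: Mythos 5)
Your proof is correct and follows essentially the same approach as the paper: use the machine computing approximations of $x$ to bound $K_k(x)$ by roughly $\log k$, then conclude that the $\liminf$ of $K_k(x)/k$ vanishes. In fact your version is slightly more careful than the paper's, since you feed the universal machine an honest prefix-free description $\pi_k$ of $k$ (giving $K_k(x)\leq K(k)+c$) rather than the raw string $\str{k}$, whose set of codes is not prefix-free; the extra $\log\log$ term this costs is harmless for the $\liminf$.
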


\begin{proof}
	Suppose $M$ with program $p$ is a machine that on input $\str{s}$ for $s < \omega$ computes $\str{q_s}$ for some $q_s \in \qq^m \cap B_{2^{-s}}(x)$. Then $h\left( 0^{\len(p)}1p\str{s} \right) = \str{q_s} \in \qq^m \cap B_{2^{-s}}(x)$ and so $K_s(x) \leq \len(\str{s}) + c$. Recall that $\len(\str{s}) \leq \log(s) + 1$ and thus
	\[
		\dim(x) \leq \liminf_{s \rightarrow \infty} \frac{\log(s) + 1 + c}{s} = 0
	\]
	as needed.
\end{proof}

\begin{lem}\label{lem:countDimZero}
Every countable set $E \subset \rr^2$ has Hausdorff dimension $0$.
\end{lem}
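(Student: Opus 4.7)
The plan is to apply the point-to-set principle (\cref{thm:pts}) in conjunction with the previous lemma. Enumerate $E = \set{x_n}{n < \omega}$. The first step is to construct a single oracle $A \in \cant$ that computes each $x_n$ uniformly in $n$: for instance, by interleaving the (infinite) string representations $\str{x_n}$ bit by bit, so that from $A$, the index $n$, and a precision $k < \omega$, one can effectively recover a rational $q \in \qq^2 \cap B_{2^{-k}}(x_n)$.

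Next, I would observe that the proof of \cref{lem:computableRealZeroDim} relativises without change: if $x \in \rr^2$ is computable relative to $A$, then $\dim^A(x) = 0$. Applied here, this yields $\dim^A(x_n) = 0$ for every $n < \omega$, and therefore
\[
\sup_{x \in E} \dim^A(x) = 0.
\]
Finally, invoking \cref{thm:pts}, we obtain
\[
\dim_H(E) = \min_{B \in \cant} \sup_{x \in E} \dim^B(x) \leq \sup_{x \in E} \dim^A(x) = 0,
\]
and since Hausdorff dimension is always non-negative, $\dim_H(E) = 0$ as claimed.

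The only mildly technical step is the oracle construction; but since $E$ is countable and each point admits an infinite binary code as described in \cref{sec:codings}, the standard interleaving encoding suffices, and there is no real obstacle. I prefer this PTS-based proof over the classical measure-theoretic cover argument because it fits the computability-theoretic framework developed in the preliminaries and will be the mode of argument in the main constructions.
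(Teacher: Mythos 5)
Your proof is correct and matches the paper's argument essentially verbatim: the paper also takes $X = \bigoplus \str{x_i}$ (the infinite join, i.e.\ exactly your interleaving oracle), notes that each $x_i$ is computable from $X$, invokes the relativized form of \cref{lem:computableRealZeroDim}, and concludes via the point-to-set principle \cref{thm:pts}. You have simply spelled out the relativization step that the paper leaves implicit.
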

\begin{proof}
Suppose $E = \set{x_i}{i < \omega}$, and let $X = \bigoplus \str{x_i}$, the infinite join. Let $M$ with program $p$ be a machine with oracle to access to $X$ that on input $(\str{i},\str{s})$ computes $\str{x_i}[s]$. Then it is clear that $M$ computes all $x_i$, and hence by \cref{lem:computableRealZeroDim} and the point-to-set principle \cref{thm:pts} we have
\[
	\dim_H(E) \leq \sup_{x \in E} \dim^X(x) = 0
\]
as needed.
\end{proof}

\begin{lem}\label{lem:oneTwoDim}
	Let $r \in \rr$. Then for every oracle $A \in \cant$ the following hold.
	\begin{enumerate}
		\item $\dim^A(r) = \dim^A(r,0)$ \label{item:pairDim}
		\item $\dim^A(r) = \dim^A(-r)$ \label{item:negDim}
	\end{enumerate}
\end{lem}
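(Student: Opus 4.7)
The plan is to show that in both parts, the two points in question are related by a computable bijection, so their precision-$t$ complexities agree up to an additive constant that is independent of $t$. Once this is established, dividing by $t$ and taking $\liminf_{t \to \infty}$ makes the constant vanish, yielding the equalities of effective dimensions. Relativisation to an oracle $A$ adds nothing substantive, since the auxiliary machines used do not require oracle access.

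For part (\ref{item:pairDim}), I would establish that there is a constant $c$ with $|K^A_t(r) - K^A_t(r,0)| \leq c$ for all $t$. For the inequality $K^A_t(r,0) \leq K^A_t(r) + O(1)$, take an optimal rational $q \in \qq \cap B_{2^{-t}}(r)$ witnessing $K^A_t(r)$; then $(q,0) \in \qq^2 \cap B_{2^{-t}}(r,0)$, and there is a fixed prefix-free machine that, given a program for $\str{q}$, outputs a program for the pair $\str{(q,0)}$ (concatenate the coding of $0$ according to Section \ref{sec:codings}). For the reverse, take an optimal $(p,q) \in \qq^2 \cap B_{2^{-t}}(r,0)$ witnessing $K^A_t(r,0)$; then $|p - r| \leq |(p,q) - (r,0)| < 2^{-t}$, so $p \in \qq \cap B_{2^{-t}}(r)$, and a fixed machine can extract the first coordinate from the pair coding. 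Dividing by $t$ and letting $t \to \infty$ gives $\dim^A(r) = \dim^A(r,0)$.

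For part (\ref{item:negDim}), the argument is almost identical: negation on $\qq$ is computable (just flip the sign bit in the coding from Section \ref{sec:codings}), and it is an isometry of $\rr$, so $q \in B_{2^{-t}}(r)$ if and only if $-q \in B_{2^{-t}}(-r)$. Hence a fixed machine converts a program for $\str{q}$ into a program for $\str{-q}$, and vice versa, giving $|K^A_t(r) - K^A_t(-r)| \leq c$ for some constant $c$ independent of $t$. Taking $\liminf_{t \to \infty}$ after dividing by $t$ yields $\dim^A(r) = \dim^A(-r)$.

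No step is really an obstacle here; the only point requiring minor care is being explicit about the coding of pairs and negatives from Section \ref{sec:codings} so that the extraction and sign-flip machines are genuinely fixed (and in particular their description lengths do not depend on $t$ or $r$). Everything else is bookkeeping on additive constants that disappear in the limit.
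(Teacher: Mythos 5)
Your proof is correct and takes essentially the same approach as the paper: exhibit a constant-overhead computable conversion between the relevant objects and take $\liminf$ so the constant vanishes. The only cosmetic difference is that you argue directly with the precision-$t$ complexity $K^A_t$, while the paper works with string approximations $K(\str{x}[s])$ via \cref{cor:dimApprox} and, for part (\ref{item:pairDim}), invokes subadditivity together with the computability of $0$ (\cref{lem:computableRealZeroDim}); your version gives the slightly tighter $O(1)$ bound in place of the paper's $K(\str{0}[s])$ term, but both suffice for the $\liminf$.
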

\begin{proof}
	It is easily seen, modulo machine constants, that
	\[
		K(\str{r}[s]) \leq K(\str{r}[s]\str{0}[s]) \leq K(\str{r}[s]) + K(\str{0}[s]).
	\]
	Since $0$ is computable, \cref{lem:computableRealZeroDim} implies $\lim_{s \rightarrow \infty} \frac{K(\str{0}[s])}{s} = 0$. Applying $\liminf$ yields \cref{item:pairDim}. For \cref{item:negDim}, observe that it is easy to compute $\str{-r}[s]$ from $\str{r}[s]$, from which the result follows immediately. Both arguments relativise.
\end{proof}

\begin{lem}\label{lem:projPth}
	Let $\theta \in [0,\pi)$. If $E \subset \mathbb{D}$ then
	\begin{align*}
		\dim_H(\proj_\theta(E)) = \dim_H(E(\theta)).
	\end{align*}
\end{lem}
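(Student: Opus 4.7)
The plan is to identify both $\proj_\theta(E)$ and $E(\theta)$ with subsets of $\rr$ and then relate them via the absolute value map. First, I would parametrize the line $L_\theta\subset\rr^2$ by signed distance from the origin, assigning positive values to points in direction $\theta$ and negative values to points in direction $\theta+\pi$. To see this identification preserves Hausdorff dimension, I would apply the rotation $R_{-\theta}$ of $\rr^2$ (an isometry, so \cref{cor:rotationPreservesDimH} applies) to send $L_\theta$ onto the first coordinate axis, then invoke \cref{thm:pts} together with \cref{lem:oneTwoDim} to drop the (now trivially zero) second coordinate without changing dimension. Combining this with \cref{lem:polarCoords} yields $\dim_H(\proj_\theta(E))=\dim_H(F)$, where
\[
	F := \set{\varepsilon(\theta,\rho)\,s\lvert\cos(\theta-\rho)\rvert}{(s,\rho)\in E}\subset\rr,
\]
with $\varepsilon(\theta,\rho)\in\{+1,-1\}$ determined by whether $|\theta-\rho|\leq\pi/2$.

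For one inequality, I would observe that the absolute value map $\lvert\cdot\rvert\colon\rr\to\rr$ is Lipschitz with constant $1$ and sends $F$ onto $E(\theta)$; \cref{lem:rotationPreservesDimH} then gives $\dim_H(E(\theta))\leq\dim_H(F)=\dim_H(\proj_\theta(E))$. For the reverse inequality, I would note $F\subset E(\theta)\cup(-E(\theta))$, and since $x\mapsto -x$ is an isometry of $\rr$, \cref{cor:rotationPreservesDimH} yields $\dim_H(-E(\theta))=\dim_H(E(\theta))$. Finite stability of Hausdorff dimension under unions (immediate from countable subadditivity of $\hh^s$) then gives $\dim_H(\proj_\theta(E))=\dim_H(F)\leq\dim_H(E(\theta))$.

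I do not foresee a real obstacle: the argument reduces to isometry invariance and $1$-Lipschitz invariance of Hausdorff dimension, together with the trivial observation that $\dim_H$ is finitely stable under unions. The only delicate point is the identification of subsets of $L_\theta\subset\rr^2$ with subsets of $\rr$, and for this the combination of a rotation in $\rr^2$, the point-to-set principle \cref{thm:pts}, and \cref{lem:oneTwoDim} is exactly the tool the paper has already developed.
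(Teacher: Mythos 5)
Your proof is correct, and your opening steps (rotate $L_\theta$ onto the first axis, then use \cref{thm:pts} with \cref{lem:oneTwoDim}\,\eqref{item:pairDim} to drop the zero coordinate) mirror the paper's, just traversed in the opposite direction. Where you diverge is in how the sign ambiguity $\varepsilon(\theta,\rho)\in\{\pm 1\}$ is dispatched. The paper stays entirely within the point-to-set framework: it defines $p^*(s,\rho)=\pm p(s,\rho)$, invokes \cref{lem:oneTwoDim}\,\eqref{item:negDim} to see that $\dim^A(p,0)=\dim^A(p^*,0)$ pointwise for every oracle $A$, and then applies \cref{thm:pts} once more to equate $\dim_H(P_\theta(E))$ with $\dim_H(P_\theta^*(E))$. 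You instead argue setwise with classical tools: one inequality via the $1$-Lipschitz map $|\cdot|$ collapsing $F$ onto $E(\theta)$ (through \cref{lem:rotationPreservesDimH}, which is stated for maps $\rr^2\to\rr^2$ but whose proof via \cref{lem:boundOnHausdorffMeasure} clearly works for $\rr\to\rr$), and the other via the inclusion $F\subset E(\theta)\cup(-E(\theta))$ together with isometry invariance and finite stability of $\dim_H$ under unions. Both are valid; the paper's version is more in the spirit of the rest of the manuscript (one uniform appeal to \cref{thm:pts} and effective dimension of points), while yours is slightly more self-contained in that it does not need \cref{lem:oneTwoDim}\,\eqref{item:negDim} at all and uses only elementary Hausdorff-measure facts for the sign step.
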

\begin{proof}
	Fix $\theta \in [0,\pi]$ and suppose $(s,\rho) \in \mathbb{D}$. For brevity, define $p(s,\rho)$ so that
	\[
		p(s,\rho) = |\proj_{\theta}(s,\rho)| = s\lvert\cos(\theta - \rho)\rvert
	\]
	by \cref{lem:polarCoords}. Now \cref{item:pairDim} of \cref{lem:oneTwoDim} implies
	\[
		\dim^A(p(s,\rho)) = \dim^A(p(s,\rho),0)
	\]
	for every oracle $A \in \cant$. Hence let
	\[
		P_{\theta}(E) = \set{(p(s,\rho),0)}{(s,\rho) \in E} \subset \rr^2.
	\]
	It is now easy to see that $\dim_H(E(\theta)) = \dim_H(P_{\theta}(E))$ by the point-to-set principle \cref{thm:pts}.
	
	We now aim to appeal to \cref{cor:rotationPreservesDimH}: Hausdorff dimension is invariant under rotations. However, rotating $P_{\theta}(E)$ by $\theta$ anti-clockwise is not necessarily equal to $\proj_{\theta}(E)$: if there exists $(s,\rho) \in E$ for which $|\theta - \rho| > \pi/2$ then $\proj_{\theta}(s,\rho) = (p(s,\rho),\theta + \pi)$, not $(p(s,\rho),\theta)$. This is easily accounted for: whenever $(s,\rho) \in E$ and $|\theta - \rho| > \pi/2$ then, passing to Euclidean coordinates, consider $(-p(s,\rho),0)$ instead. To this end, let
	\begin{align*}
		p^*(s,\rho) &= \begin{cases}
			p(s,\rho) &\text{if $|\theta - \rho| \leq \pi/2$}\\
			-p(s,\rho) &\text{otherwise.}
		\end{cases}
	\intertext{and hence define, in Euclidean coordinates, the set}
		P_{\theta}^*(E) &= \set{(p^*(s,\rho),0)}{(s,\rho) \in E};
	\end{align*}
	see \cref{fig:pThetaProjections}. By \cref{item:pairDim,item:negDim} of \cref{lem:oneTwoDim}, it is immediate that $\dim^A(p(s,\rho),0) = \dim^A(p^*(s,\rho),0)$ for all oracles $A \in \cant$. Hence the point-to-set principle implies that $\dim_H(P_{\theta}(E)) = \dim_H(P_{\theta}^*(E))$. Further, rotating $P_{\theta}^*(E)$ by $\theta$ yields $\proj_{\theta}(E)$. Hence applying \cref{cor:rotationPreservesDimH} shows
	\[
		\dim_H(E(\theta)) = \dim_H(P_{\theta}(E)) = \dim_H(P^*_{\theta}(E)) = \dim_H(\proj_{\theta}(E))
	\]
as required.
\end{proof}

\begin{figure}%
    \centering
\begin{tikzpicture}

\draw[>=latex,<->] (-1.25,0) -- (3.25,0); 
\draw[>=latex,<->] (0,-1.25) -- (0,4.25); 

\draw[thick] (0,0) -- (60:4.618802);
\node at (1.7,3.65) {$L_{\theta_1}$};

\draw (0,0) -- (75:4.141105);
\node at (0.57,3.65) {$L_{\rho_2}$};

\draw (0,0) -- (20:3.363102);
\node at (2.85,1.45) {$L_{\rho_1}$};

\filldraw (20:1.5) circle (2pt); 
\filldraw (75:2.85) circle (2pt);

\filldraw (60:1.299038) circle (2pt);
\draw[dashed] (20:1.5) -- (60:1.299038);
\filldraw (60:2.752889) circle (2pt);
\draw[dashed] (75:2.85) -- (60:2.752889);

\draw (0:1.299038) arc (0:60:1.299038);
\draw (0:2.752889) arc (0:60:2.752889);

\filldraw[draw=black,fill=white] (0:1.299038) circle (2pt);
\filldraw[draw=black,fill=white] (0:2.752889) circle (2pt);

\node at (1.4,1.25) {$(r_1,\theta_1)$};
\node at (2.15,2.45) {$(r_2,\theta_1)$};

\draw [
    decorate, 
    decoration = {brace,mirror}]
        (0.1,-0.1) -- (1.2,-0.1)
	node[pos=0.5,below=0.05,black]{$r_1$};
\draw [
    decorate, 
    decoration = {brace,mirror}]
        (0.1,-0.65) -- (2.65,-0.65)
	node[pos=0.5,below=0.05,black]{$r_2$};

\end{tikzpicture}
\hspace{0.8cm}%
\begin{tikzpicture}

\draw[>=latex,<->] (-4.25,0) -- (4.25,0); 
\draw[>=latex,<->] (0,-1.25) -- (0,4.25); 

\draw[thick] (0,0) -- (150:4.618802);
\draw[thick] (0,0) -- (-30:2);
\node at (-3.7,2.5) {$L_{\theta_2}$};

\draw (0,0) -- (30:4.618802);
\node at (3.6,2.5) {$L_{\rho_3}$};

\draw (0,0) -- (80:4.061706);
\node at (1,3.45) {$L_{\rho_4}$};

\filldraw (30:3.25) circle (2pt); 
\filldraw (80:3.25) circle (2pt); 

\filldraw (-30:1.625) circle (2pt); 
\filldraw (150:1.111565) circle (2pt); 

\draw[dashed] (30:3.25) -- (-30:1.625);
\draw[dashed] (80:3.25) -- (150:1.111565);

\draw (0:1.111565) arc (0:150:1.111565);
\draw (180:1.625) arc (180:200:1.625);
\draw[dashed] (200:1.625) arc (200:225:1.625);
\draw[dashed] (315:1.625) arc (315:320:1.625);
\draw (320:1.625) arc (320:330:1.625);

\filldraw[draw=black,fill=white] (180:1.625) circle (2pt);
\filldraw[draw=black,fill=white] (0:1.111565) circle (2pt);

\draw [
    decorate, 
    decoration = {brace,mirror}]
        (-1.525,-0.1) -- (-0.1,-0.1)
	node[pos=0.5,below=0.05,black]{$r_3$};
	
\node at (2.7,-0.6) {$(r_3,\theta_2 + \pi)$};
\node at (-1.25,1.25) {$(r_4,\theta_2)$};

\end{tikzpicture}
\caption{If $|\rho - \theta| \leq \pi/2$ then it suffices to consider the length of the projections on the first axis, and rotate (see $\rho_1,\rho_2$ and $\theta_1$). Otherwise, we need to mirror along the second axis and then rotate by $\theta$; see $\theta_2$ and $\rho_3$.}
\label{fig:pThetaProjections}
\end{figure}
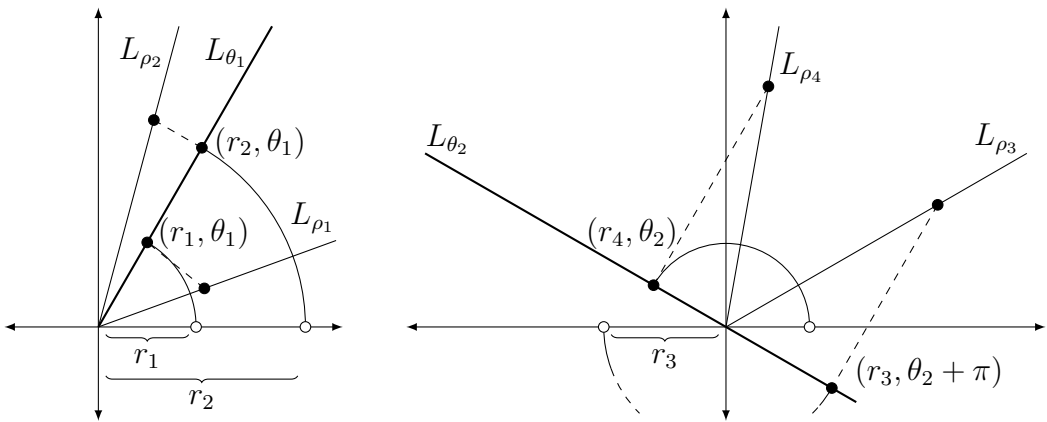%


\section{Constructing co-analytic sets by recursion}\label{sec:zoltan}
Z.\ Vidny\'{a}nszky \cite{zoltan} has developed a general recursive construction of co-analytic sets using co-analytic conditions, assuming $\VL$. While their result is very general (it uses descriptive set theoretical tools and applies to all Polish spaces), we only focus on a special case, which we describe below. The result is a generalisation of a method used by A.\ Miller \cite{miller}. The earliest version of the method is due to Erd\H{o}s, Kunen, and Mauldin \cite{erdosEtAl}.

For notational simplicity, if $X = \{ x_{\alpha} \mid \alpha < \omega_1 \}$ we define $X \upharpoonright \alpha = \{ x_{\beta} \mid \beta < \alpha \}$.

\begin{dfn}\label{dfn:compatibleF}
Given $F \subset \mathbb{D}^{\leq \omega} \times [0,\pi/2] \times \mathbb{D}$, a set $X = \set{x_{\alpha}}{\alpha < \omega_1}$ is \emph{compatible with $F$} if the following exist:
\begin{itemize}
	\item an enumeration $\set{p_{\alpha}}{\alpha < \omega_1}$ of $[0,\pi/2]$; and
	\item an enumeration $\set{A_{\alpha}}{\alpha < \omega_1} \subset \mathbb{D}^{\leq\omega}$ such that if $\alpha < \omega_1$ then $A_{\alpha} = X \upharpoonright \alpha$
\end{itemize}
such that for each $\alpha < \omega_1$ we have $(A_{\alpha},p_{\alpha},x_{\alpha}) \in F$.
\end{dfn}

Observe that since $A_{\alpha} \in \mathbb{D}^{\leq\omega}$ each $A_{\alpha}$ has order type $\leq \omega$. This is well-defined since $\omega_1$ is the least uncountable ordinal, hence for every $\omega \leq \beta < \omega_1$ there is a bijection between $\beta$ and $\omega$, providing the bounded ordering of order type $\omega$. 

\begin{dfn}\label{dfn:turingCofinal}
	A set $X \subset \cant$ is \emph{cofinal in the Turing degrees} if it is cofinal in the partial ordering of Turing degrees. If $m \geq 1$ and $X \subset \rr^m$ then $X$ is cofinal in the Turing degrees if the set $\set{\str{x}}{x \in X}$ is.
\end{dfn}

\begin{thm}[{\cite[Thm.\ 1.3]{zoltan}}, $\VL$]\label{thm:zoltansThm}
Let $F \subset \mathbb{D}^{\leq \omega} \times [0,\pi/2] \times \mathbb{D}$. If $F$ is co-analytic and if for all $(A,p) \in \mathbb{D}^{\leq \omega} \times [0,\pi/2]$ the section
\begin{align*}
     F(A,p) = \{ x \in \mathbb{D} \mid F(A,p,x) \}                                                                                                                                                                                                  \end{align*}
is cofinal in the Turing degrees, then there exists a co-analytic set $X \subset \mathbb{D}$ that is compatible with $F$.
\end{thm}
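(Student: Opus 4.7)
The plan is to construct $X$ by transfinite recursion of length $\omega_1$, leveraging the $\Sigma^1_2$ good well-ordering $<_L$ of $\mathbb{D}$ available under $\VL$. First, I would fix (inside $L$) an enumeration $\{p_\alpha : \alpha < \omega_1\}$ of $[0,\pi/2]$. At stage $\alpha$, with $X \upharpoonright \alpha = \{x_\beta : \beta < \alpha\}$ already chosen, I would let $x_\alpha$ be the $<_L$-least element of the section $F(X \upharpoonright \alpha, p_\alpha)$ whose Turing degree strictly computes the join of the $x_\beta$ ($\beta < \alpha$) together with enough parameters to describe the construction up to stage $\alpha$. The cofinality-in-Turing-degrees hypothesis guarantees such an $x_\alpha$ always exists, and by construction the resulting $X$ is compatible with $F$ in the sense of \cref{dfn:compatibleF}.

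The substantive step is proving that $X$ is $\Pi^1_1$. I would characterise membership via a coded-construction argument: $x \in X$ iff there exists a real $c$ coding a countable well-ordered sequence $(y_\beta)_{\beta \leq \gamma}$ such that (i) $c$ codes a well-ordering, a $\Pi^1_1$ condition via the standard set $WO$; (ii) at each stage $\beta \leq \gamma$, $y_\beta$ is precisely the $<_L$-least element of $F(\{y_\delta : \delta < \beta\}, p_\beta)$ meeting the Turing-degree requirement, which is a $\Pi^1_1$ condition because $F$ is co-analytic and $<_L$-minimality is $\Sigma^1_2$ and $L$-absolute; and (iii) $y_\gamma = x$. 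The existential quantifier over $c$ would naively put the statement in $\Sigma^1_2$; the reduction to $\Pi^1_1$ leverages the canonicity of the $<_L$-selection, which makes the code $c$ essentially unique given $\gamma$, together with Kondo's $\Pi^1_1$-uniformisation (or equivalently a direct Shoenfield-style absoluteness argument) to re-express $x \in X$ as the negation of an analytic statement asserting the non-existence of any construction-trace terminating at $x$.

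The hardest step will be this complexity calculation, where $\VL$ is indispensable: the $\Sigma^1_2$ good well-ordering makes the selection deterministic and ensures $\omega_1^L = \omega_1$, allowing the existential quantifier over countable construction codes to be traded for a universal quantifier over analytic counter-witnesses. Without $\VL$, the natural complexity of $X$ is $\Sigma^1_2$ in $V$, not co-analytic. The remaining verification of \cref{dfn:compatibleF} from the construction is straightforward, as the enumerations $\{p_\alpha\}$ and $\{X \upharpoonright \alpha\}$ serve as the required witnesses by definition of the recursion.
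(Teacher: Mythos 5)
This statement is \emph{cited} from Vidny\'anszky~\cite{zoltan} and is not proved in the paper: \cref{sec:zoltan} only quotes it, describes its intuition, and uses it as a black box, so there is no internal argument to compare against. Your sketch is recognisably the right genre---the Erd\H{o}s--Kunen--Mauldin and Miller style of $\Pi^1_1$ recursion under $\VL$, which the paper explicitly says Vidny\'anszky's theorem generalises.

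On the substance, the complexity-reduction step is where your sketch is underspecified and partly mislabelled. You appeal to ``Kondo's $\Pi^1_1$-uniformisation (or equivalently a direct Shoenfield-style absoluteness argument)''; these are two different theorems, and neither is quite the device used. The usual mechanism is to arrange the recursion so that a \emph{fixed} recursive procedure applied to $x_\alpha$ yields a well-ordering $R$ of $\omega$ of order type $\gamma$, where $L_\gamma$ already correctly computes (by $\Sigma^1_2$-goodness of $<_L$ and absoluteness of $\Pi^1_1$ sections) the whole construction through stage $\alpha$. Then $x \in X$ is: the procedure applied to $x$ produces a well-ordering (a $\Pi^1_1$ predicate via $WO$), and---decoding the countable model $L_\gamma$ from that well-ordering---$L_\gamma$ verifies that $x$ occurs in the construction (an arithmetic statement in $x$ and $R$). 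The existential real quantifier over traces disappears because the trace is \emph{recovered from $x$ itself}, which bounds it. For this you need $x_\alpha$ to code an entire $L$-level, not merely to ``strictly compute the join of the $x_\beta$''; ``enough parameters to describe the construction'' is a gesture at the right thing but needs to be made precise, and the Turing-cofinality hypothesis on the sections is precisely what guarantees candidates of arbitrarily high degree are always available.
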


It should be noted that the theorem above has been proven to hold for all Polish spaces and all uncountable Borel subsets of an arbitrary Polish space \cite{zoltan}.

\Cref{thm:zoltansThm} proves a type of recursion principle: if $[0,\pi/2] = \set{p_{\alpha}}{\alpha < \omega_1}$ is considered the set of \emph{conditions}, then $\mathbb{D}$ is the set of \emph{candidates}. The theorem guarantees the existence of a set $X = \{ x_{\alpha} \mid \alpha < \omega_1 \}$ which, a posteriori, can be considered as being constructed in stages: at each stage a particular condition is satisfied (by picking a suitable candidate) without violating already satisfied conditions.\\

We outline the intuition: at stage $\alpha$ we have access to $A_{\alpha}$ (the set of elements we have already enumerated into $X$) and to the current condition to be satisfied, $p_{\alpha}$. The section $F(A_{\alpha},p_{\alpha})$ now gives the set of \emph{suitable candidates} which both satisfy condition $p_{\alpha}$ and respect $A_{\alpha}$. Since $A_{\alpha} = X \upharpoonright \alpha$, we see that $X$ satisfies all conditions and is coherent (in a sense similar to that of coherence in inverse limits in category theory).

In our particular case, we think of $\mathbb{D}$ as the set of candidates, and of $[0,\pi/2]$ as the set of conditions: these are the angles we project on. We satisfy conditions differently in \cref{thm:firstThm,thm:secondThm}: let $\theta \in [0,\pi/2]$ be the current condition to be satisfied. In \cref{thm:firstThm}, we pick a suitable candidate on the straight line through the origin at angle $\theta$; in \cref{thm:secondThm}, we use $\theta$ as an oracle to find a point (lying on a different line) that is sufficiently complicated relative to it.

We then show that the sections containing suitable candidates are cofinal in the Turing degrees. We prove that for every $X \in \cant$ there exists $x \in F(A,p)$ such that $x$ codes some $Y \geq_T X$ via some $m$-reduction; we incorporate those reductions via what we call \emph{folding maps} in \cref{sec:provingCofinality}. (For an introduction to computability theoretic notions such as $m$-reductions see Soare \cite{soare}.)\\

Vidny\'{a}nszky and Medini have provided applications of \cref{thm:zoltansThm} \cite{zoltan,mediniVid}: among others, they construct a co-analytic two-point set as well as a co-analytic Hamel basis and MAD (maximally almost disjoint) family of sets. All of these results had been obtained previously by A.\ Miller \cite{miller}.


\section{The Proof of \texorpdfstring{\Cref{thm:firstThm}}{our first Theorem}}\label{sec:firstThm}

In this section, we construct the following counterexample: a plane set of Hausdorff dimension $1$, all of whose projections have dimension $0$. Using the results from \cref{sec:polar}, we will argue in polar coordinates.

\begin{thm}[$\VL$]\label{thm:firstThm}
	There exists a co-analytic set $E \subset \rr^2$ such that $\dim_H(E) = 1$ while, for every $\theta \in [0,\pi]$ we have $\dim_H(\proj_{\theta}(E)) = 0$.
\end{thm}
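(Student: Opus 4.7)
The plan is to apply Vidny\'anszky's theorem (\cref{thm:zoltansThm}) with a carefully chosen co-analytic condition $F \subset \mathbb{D}^{\leq \omega} \times [0, \pi/2] \times \mathbb{D}$. The resulting compatible co-analytic set $X \subset \mathbb{D}$, transferred to Cartesian coordinates via the computable polar-Cartesian homeomorphism, will be the desired co-analytic $E \subset \rr^2$. All dimension calculations are carried out in polar coordinates: \cref{prp:stringsNpolar} translates complexity between the two coordinate systems, and \cref{lem:projPth} reduces $\dim_H(\proj_\theta(E))$ to the Hausdorff dimension of the set of projection radii $E(\theta) = \{s_\alpha |\cos(\theta - p_\alpha)| : \alpha < \omega_1\} \subset \rr$. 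At stage $\alpha$ with condition $p_\alpha \in [0, \pi/2]$, the condition $F$ forces the new candidate $x_\alpha$ to lie on the line $L_{p_\alpha}$ through the origin, so $x_\alpha = (s_\alpha, p_\alpha)$ in polar form, and the radius $s_\alpha$ is the key object to construct.

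The radius $s_\alpha$ will be produced via a ``folding map'', i.e.\ an $m$-reduction embedding a target real $Y$ (of arbitrarily high Turing degree) into a sparse subset of the bit positions of $s_\alpha$. The folding is designed to guarantee three simultaneous properties: (i) $\dim(s_\alpha) = 0$, which follows from sparsity of the folded positions; (ii) $s_\alpha \geq_T Y$, securing the cofinality hypothesis of \cref{thm:zoltansThm} by letting $Y$ range above any prior-stage data $A \oplus p$; and (iii) for every $\theta \in [0, \pi]$, $\dim^\theta(s_\alpha |\cos(\theta - p_\alpha)|) = 0$, so that each projection value has effective dimension zero relative to the projection angle. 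Condition (iii) is universally quantified over $\theta$ of an arithmetical effective-dimension-zero statement and is therefore $\Pi^1_1$; conditions (i)--(ii) are arithmetical. Hence $F$ is co-analytic, and cofinality of the sections $F(A, p)$ in the Turing degrees is witnessed by folding any prescribed $Y \geq_T A \oplus p$ of arbitrarily high degree into the sparse positions.

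With \cref{thm:zoltansThm} providing a co-analytic $X \subset \mathbb{D}$ compatible with $F$, the dimensional verifications go through the point-to-set principle (\cref{thm:pts}). For the upper bound $\dim_H(E) \leq 1$: from (i), together with the packing dimension of $s_\alpha$ being $0$ and the subadditivity of Kolmogorov complexity, one obtains $\dim(x_\alpha) = \dim(s_\alpha, p_\alpha) \leq \dim(p_\alpha) \leq 1$ unconditionally, so $\sup_\alpha \dim(x_\alpha) \leq 1$ and \cref{thm:pts} with trivial oracle yields the bound. For $\dim_H(E) \geq 1$: since $(p_\alpha)_{\alpha < \omega_1}$ enumerates $[0, \pi/2]$, for every oracle $B$ some $p_\alpha$ is Martin-L\"of random relative to $B$, giving $\dim^B(x_\alpha) \geq \dim^B(p_\alpha) = 1$. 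For $\dim_H(\proj_\theta(E)) = 0$: by \cref{lem:projPth} this reduces to $\dim_H(E(\theta)) = 0$, which follows from (iii) via \cref{thm:pts} applied with oracle $\theta$.

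The main obstacle is the design of the folding map so that condition (iii) holds simultaneously for all $\theta \in [0, \pi]$. The tension is sharp: to enforce $\dim(s_\alpha) = 0$ the bit encoding must be sparse, but sparsity dilutes the precision at which $p_\alpha$ (needed inside the cosine factor) can be recovered from prefixes of $s_\alpha$, and computing $s_\alpha |\cos(\theta - p_\alpha)|$ to precision $r$ ostensibly requires $p_\alpha$ to comparable precision. A naive sparse embedding yields Kolmogorov complexity bounds of $O(r)$, not the required $o(r)$. Closing this gap is the technical heart of the construction, exploiting the interplay between $p_\alpha$, its coded copy inside $s_\alpha$, and the multiplier $\cos(\theta - p_\alpha)$; the $V = L$ hypothesis enters through the $\omega_1$-transfinite recursion underpinning Vidny\'anszky's theorem.
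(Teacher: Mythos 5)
Your framework matches the paper's at a high level — Vidnyánszky's theorem, polar coordinates, Lemma~\ref{lem:projPth}, the point-to-set principle, $\dim_H(E) \geq 1$ via a random angle relative to each oracle — but the central condition you impose on $F$ is not achievable, and this is a genuine gap rather than a technical detail left to fill. Your condition (iii) requires $\dim^\theta\bigl(s_\alpha\,|\cos(\theta - p_\alpha)|\bigr) = 0$ for \emph{every} $\theta \in [0,\pi]$, relativized only to $\theta$. Fix $s_\alpha > 0$ and $p_\alpha$, and take any $\theta$ for which $p_\alpha$ is Martin-Löf random relative to $\theta \oplus \str{s_\alpha}$ (such $\theta$ exist in abundance: e.g.\ any computable $\theta$ when $p_\alpha$ is sufficiently random). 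Then $a := |\cos(\theta - p_\alpha)|$ is random relative to $\theta \oplus \str{s_\alpha}$ as well, since $\cos$ is locally bi-Lipschitz and computable from $\theta$. Multiplication by the fixed positive real $s_\alpha$ is a bi-Lipschitz map computable from $\str{s_\alpha}$, so $\dim^{\theta\oplus\str{s_\alpha}}(s_\alpha a) = \dim^{\theta\oplus\str{s_\alpha}}(a) = 1$, and therefore $\dim^\theta(s_\alpha a) \geq \dim^{\theta\oplus\str{s_\alpha}}(s_\alpha a) = 1$, not $0$. No choice of folding map can rescue this: the obstruction has nothing to do with the sparsity of the coding of $s_\alpha$. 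So the section $F(A,p)$ you propose is empty, and \cref{thm:zoltansThm} is inapplicable.

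The paper avoids this by making $F$ quantify only over the \emph{countably many} previously enumerated angles $\theta' \in \ran(A)$ (and their $\pi/2$-shifts), a condition realizable by \cref{prp:turingCofinality}; this yields a Borel $F$. In the verification, the split at stage $\delta$ (where $\varphi_\delta = \varphi$ or $\varphi - \pi/2$) is essential: for $\beta \leq \delta$ there are only countably many points, and the oracle $X$ explicitly codes all the resulting projection values (making their $\dim^X = 0$ trivially, not by any relativized-to-$\theta$ property of $r_\beta$); for $\beta > \delta$ the condition $F$ gives $\dim(r_\beta|\cos(\varphi - \theta_\beta)|) = 0$ \emph{unrelativized}, because $\varphi_\delta$ was already in the past of stage $\beta$. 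You should also replace your upper-bound argument for $\dim_H(E) \leq 1$ (which assumes $\Dim(s_\alpha)=0$, a property not guaranteed by the construction of $r$ in \cref{par:construction}, where the blocks $\rho_k$ need not be zero strings) with the paper's much cleaner \cref{cor:optimalDim}: since all projections have dimension $0$, $\dim_H(E) \leq 1$ follows immediately.
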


We make use of the following classical theorem. We give a proof using effective dimension, and the point-to-set principle \cref{thm:pts}.

\begin{lem}\label{lem:meetsEveryLine}
	If $E \subset \rr^2 \setminus \{ 0 \}$ intersects every line through the origin in $\mathbb{D}$, then $\dim_H(E) \geq 1$.
\end{lem}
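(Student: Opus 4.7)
The strategy is to invoke the point-to-set principle (Theorem \ref{thm:pts}): to show $\dim_H(E) \geq 1$ it suffices to establish that for every oracle $A \in \cant$ there exists a point $x \in E$ with $\dim^A(x) \geq 1$. Fix such an oracle $A$.

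First I would choose an angle $\theta \in [0,\pi/2]$ satisfying $\dim^A(\theta)=1$. Such angles are abundant: via the computable bijection $t \mapsto \tfrac{\pi}{2}t$ from $[0,1]$ to $[0,\pi/2]$, which preserves effective dimension, any $t \in [0,1]$ whose binary expansion is Martin-L\"of random relative to $A$ produces such a $\theta$, and this set has full Lebesgue measure. By the hypothesis on $E$, the line $L_\theta$ meets $E$ inside $\mathbb{D}$; pick $x \in E \cap L_\theta \cap \mathbb{D}$, which in polar coordinates is $x=(r,\theta)$ for some $r \in (0,1]$.

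By (the relativisation of) Proposition \ref{prp:stringsNpolar}, $\dim^A(x) = \dim^A(r,\theta)$, so it remains to show $\dim^A(r,\theta) \geq \dim^A(\theta)$. Corollary \ref{cor:dimApprox} gives
\[
  \dim^A(r,\theta) = \liminf_{s \rightarrow \infty} \frac{K^A(\str{r}[s]\str{\theta}[s])}{s}, \qquad \dim^A(\theta) = \liminf_{s \rightarrow \infty} \frac{K^A(\str{\theta}[s])}{s}.
\]
Since both $r$ and $\theta$ lie in a bounded interval, the codings $\str{r}[s]$ and $\str{\theta}[s]$ from Section \ref{sec:codings} have length $s + O(1)$, so $\str{\theta}[s]$ can be parsed from the concatenation $\str{r}[s]\str{\theta}[s]$ by a single fixed machine. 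Hence $K^A(\str{\theta}[s]) \leq K^A(\str{r}[s]\str{\theta}[s]) + O(1)$; dividing by $s$ and taking $\liminf$ yields $\dim^A(\theta) \leq \dim^A(r,\theta) = \dim^A(x)$.

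Chaining the inequalities, $\dim^A(x) \geq \dim^A(\theta) = 1$, so $\sup_{x \in E} \dim^A(x) \geq 1$ for every oracle $A$; the point-to-set principle then gives $\dim_H(E) \geq 1$. There is no substantive obstacle in this argument. The only mildly delicate point is the uniform extractability of $\str{\theta}[s]$ from $\str{r}[s]\str{\theta}[s]$, which is immediate from the predictable length of the coding on a bounded range; everything else is an unfolding of definitions together with the observation that one can always find reals of maximal effective dimension relative to a fixed oracle.
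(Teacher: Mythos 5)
Your proposal is correct and follows essentially the same route as the paper: fix an oracle $A$, select an angle $\theta$ with $\dim^A(\theta)=1$ (the paper does this by setting $\str{\theta}=0001B$ for $B$ Martin-L\"of random relative to $A$, avoiding your rescaling step $t\mapsto\tfrac{\pi}{2}t$ and the attendant need to note that multiplication by a computable constant preserves effective dimension), pick a point of $E$ on $L_\theta$, and observe that $\str{\theta}[s]$ is uniformly recoverable from $\str{r}[s]\str{\theta}[s]$ so the point's relative dimension is at least $\dim^A(\theta)$.
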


\begin{proof}
	Let $A \in \cant$ be an oracle. There exists $B \in \cant$ random relative to $A$. Thus $\str{\theta} = 0001B \in \cant$ codes a real $\theta \in (0,1)$. Since $B$ is random relative to $A$, we know $K^A(B \upharpoonright s) \geq s-c$ for some constant $c$. As $B \upharpoonright s$ is easily computable from $\str{\theta}[s]$ we have
	\[
		s-c \leq K^A(B \upharpoonright s) \leq K^A(\str{\theta}[s]) + c'
	\]
	for some machine constant $c'$. Thus
	\[
		\dim^A(\theta) = \liminf_{s \rightarrow \infty} \frac{K^A(\str{\theta}[s])}{s} \geq \liminf_{s \rightarrow \infty} \frac{K^A(B \upharpoonright s)}{s} \geq \liminf_{s \rightarrow \infty} \frac{s-c}{s} = 1.
	\]
	Since $E$ intersects the line with angle $\theta$, there exists $r > 0$ such that $(r,\theta) \in E$. Therefore
	\[
		\dim^A(r,\theta) = \liminf_{s \rightarrow \infty} \frac{K^A(\str{r}[s]\str{\theta}[s])}{s}.
	\]
	We can easily compute $\str{\theta}[s]$ from $\str{r}[s]\str{\theta}[s]$, so $K^A(\str{\theta}[s]) \leq K^A(\str{r}[s]\str{\theta}[s]) + c''$ for some machine constant $c''$. Hence
	\begin{align*}
		\dim^A(r,\theta) = \liminf_{s \rightarrow \infty} \frac{K^A(\str{r}[s]\str{\theta}[s])}{s} \geq \liminf_{s \rightarrow \infty} \frac{K^A(\str{\theta}[s])}{s} = \dim^A(\theta) = 1.
	\end{align*}
	Since $A$ was arbitrary, the result follows.
\end{proof}

\subsection{The roadmap towards a proof}\label{sec:roadmap}
We assume $\VL$, and hence let $B = \set{\theta_{\alpha}}{\alpha < \omega_1}$ be an enumeration of $[0,\pi/2]$. We want to argue by induction on $\omega_1$ and hence build $E \subset \mathbb{D}$ satisfying \cref{thm:firstThm} in stages; we think of the angles in $B$ as the \emph{conditions} (or \emph{requirements}) which need to be satisfied. During our construction, when considering condition $\varphi$, we also handle $\varphi + \pi/2$ at the same time. By \cref{thm:zoltansThm}, at stage $\alpha$ we have access to all points $(r_i,\theta_i)$ already enumerated into $E$. We aim to satisfy condition $\theta_{\alpha}$. As a shorthand, denote $\theta = \theta_{\alpha}$. We argue as follows:
\begin{enumerate}[label=(\arabic*)]
	\item Let $A_{\alpha} = \set{(r_i,\theta_i)}{i < \omega}$, the set of points already enumerated into $E$. For each $i < \omega$ the angular coordinate $\theta_i$ tells us which condition we have already satisfied.
	\item Construct $r \in (0,1)$ such that $\dim(r\lvert\cos(\theta - \theta_i)\rvert) = 0$ and $\dim(r\lvert\cos(\theta + \pi/2 - \theta_i)\rvert) = 0$ for all $i < \omega$. This suffices by \cref{lem:projPth}.\label{item:realExists}
	\item Enumerate the pair $(r,\theta)$ into $E$.
\end{enumerate}

Observe that the set of reals in \cref{item:realExists} must be cofinal in the Turing degrees for \cref{thm:zoltansThm} to apply. The following proposition is essential.

\begin{prp}\label{prp:turingCofinality}
	Suppose $a_i \in (0,1)$ for all $i < \omega$. There exists $r \in (0,1)$ such that $\dim(a_ir) = 0$ for all $i < \omega$. The set of such $r$ is cofinal in the Turing degrees.
\end{prp}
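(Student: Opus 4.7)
The plan is to build $r \in (0,1)$ by an inductive block construction that, at a subsequence of precisions $s_k \to \infty$, aligns $a_{i(k)}r$ with a dyadic rational of short description, while reserving one free bit per block to code an arbitrary target $X \in \cant$. Fix a function $i:\omega\to\omega$ hitting each $j$ infinitely often (in such a way that only ``sufficiently handleable'' $j$ appear at each stage, to cope with $a_j$ that are close to $0$), and a fast-growing sequence $P_0 < s_1 < P_1 < s_2 < P_2 < \cdots$ with $s_k / P_{k-1} \to \infty$. Start with $r_0 = \tfrac12$, and at stage $k \ge 1$ with $r_{k-1}$'s binary expansion fixed on $[1,P_{k-1}]$, let $v_k = a_{i(k)}r_{k-1}$, pick a dyadic rational $q_k$ (of denominator roughly $2^{P_{k-1} + \lceil\log(1/a_{i(k)})\rceil}$) with $0 < q_k - v_k < a_{i(k)} \cdot 2^{-P_{k-1}-1}$, and set
\[
  r_k = r_{k-1} + c_k + X(k) \cdot 2^{-P_k},
\]
where $c_k \in [0, 2^{-P_{k-1}-1})$ is the best binary approximation of $(q_k - v_k)/a_{i(k)}$ supported on positions in $(P_{k-1}, P_k)$. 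For the pure existence claim take $X = 0$.

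Set $r = \lim_k r_k$. Convergence in $(\tfrac12, 1)$ follows from $\delta_k := r_k - r_{k-1} \in (0, 2^{-P_{k-1}})$ being geometrically summable. The central verification is that for every $k$ the first $s_k - O(1)$ binary digits of $a_{i(k)}r$ coincide with those of $q_k$: the approximation error $|a_{i(k)}r_k - q_k|$ is at most $a_{i(k)}\cdot 2^{-P_k + 1}$ from how well $c_k$ encodes $(q_k - v_k)/a_{i(k)}$ in the block, and the tail contribution $|a_{i(k)}(r - r_k)|$ is at most $a_{i(k)}\sum_{\ell > k} 2^{-P_{\ell-1}} < 2^{-s_k}$. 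Selecting each $q_k$ with a constant-sized buffer away from dyadic boundaries of depth $s_k$ then prevents any carry from reaching the first $s_k - O(1)$ positions.

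Granting the preservation claim, for every $i$ the subsequence $\{k : i(k) = i\}$ is infinite, and along it
\[
  K_{s_k}(a_ir) \;\leq\; K(q_k) \;\leq\; P_{k-1} + \lceil \log(1/a_i)\rceil + O(\log s_k),
\]
so $K_{s_k}(a_ir)/s_k \to 0$ by the choice $P_{k-1}/s_k \to 0$, giving $\liminf_s K_s(a_ir)/s = 0$ and hence $\dim(a_ir) = 0$ by \cref{cor:dimApprox}. For Turing cofinality, the same carry bounds ensure that the bit of $r$ at the computable position $P_k$ equals $X(k)$ (the supports of $c_k$ and of all future $\delta_\ell$ miss position $P_k$, and the cumulative tail stays below $2^{-P_k}$), so $X \leq_T r$ by direct readout of $(r(P_k))_{k<\omega}$, establishing cofinality.

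The principal technical obstacle is the carry-propagation analysis in the preservation step: verifying that the $\omega$-sequence of successive tiny corrections cannot cascade, via carries, into either the first $s_k$ digits of $a_{i(k)}r$ or the coding position $P_k$ of $r$. Handling this rigorously requires a deliberate buffer when choosing each $q_k$ and a sufficiently fast growth of both $s_k - P_{k-1}$ (to kill $K(q_k)/s_k$) and $P_k - s_k$ (to geometrically suppress the tail of $(\delta_\ell)_{\ell > k}$ below the relevant thresholds).
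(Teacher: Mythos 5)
Your approach is correct in outline and, at heart, the same as the paper's: build $r$ as a limit of block-wise extensions, where at stage $k$ you steer $a_{i(k)}r$ into the $2^{-s_k}$-neighbourhood of a dyadic rational $q_k$ whose description length $\approx P_{k-1}$ is dwarfed by the precision $s_k$, and reserve one bit per block at position $P_k$ to code an arbitrary oracle $X$. This is exactly the role played by the paper's folding map $\nu(k) = 2^{2^k}$, the string $\tau = \tau'0^s$ produced by \cref{lem:extExist}, and the coding bit $d = A(k)$. What differs is the packaging: the paper works intrinsically with nested dyadic intervals ($a[\real{\rho}]\subset[\real\tau]$) and must then argue, via \cref{lem:eventuallySpace}, that the extension $\rho_k$ eventually fits inside the gap $\nu(k+1)-\nu(k)$; you instead fix the block positions $(P_k)$ up front and compute the additive correction $c_k$ directly. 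Your version avoids having to show ``the extension eventually fits,'' at the cost of carrying around explicit error bookkeeping for the correction and the tail.

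One remark that would streamline your argument: the carry-propagation analysis you flag as the main obstacle is only needed for the coding bit, and even there it is routine. For the dimension bound, recall the definition $K_t(x)=\min\{K(q): q\in\qq\cap B_{2^{-t}}(x)\}$; you only need $|a_ir - q_k| < 2^{-s_k}$, which follows directly from the error estimates (truncation of $c_k$ gives $O(2^{-P_k})$, the tail gives $O(2^{-P_k})$, and $P_k \geq s_k + O(1)$), with no need to track bit-level agreement across dyadic boundaries. Bit agreement is never invoked: $K_{s_k}(a_ir)\le K(q_k)$ holds by definition once $q_k$ lies in the ball. For the coding, since $r_k$ has all zeros in positions $> P_k$ and $\sum_{\ell>k}\delta_\ell < 2^{-P_k}$ with support entirely in positions $> P_k$, no carry can reach position $P_k$; so $r(P_k) = X(k)$ and, since $(P_k)$ is a fixed computable sequence, $X\le_T r$. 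Two small technical fixes: take the denominator exponent of $q_k$ a couple of bits larger than $P_{k-1}+\lceil\log(1/a_{i(k)})\rceil$ so the target interval $(v_k, v_k + a_{i(k)}2^{-P_{k-1}-1})$ is guaranteed to contain a dyadic of that depth; and the ``sufficiently handleable'' hedge on the scheduling function $i(\cdot)$ is unnecessary, since for each fixed $i$ the term $\lceil\log(1/a_i)\rceil$ is an additive constant that disappears in the limit $P_{k-1}/s_k\to 0$.
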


We will postpone the proof of \cref{prp:turingCofinality} to \cref{sec:provingCofinality}. However, having it in hand we may already give a proof of \cref{thm:firstThm}. One additional lemma is needed before we do so.

\begin{lem}\label{lem:dimIsBorelFunc}
	For every $A \in \cant$ and $a \in \mathbb{R}$ the set $\set{x \in \rr}{\dim^A(x) = a}$ is Borel.
\end{lem}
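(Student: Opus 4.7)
The plan is to show that the map $x \mapsto \dim^A(x)$ is a Borel measurable function from $\rr$ to $[0,1]$, so that $\{x : \dim^A(x) = a\}$ is the preimage of a singleton under a Borel function and therefore Borel. Since $\dim^A(x) = \liminf_{t \to \infty} K^A_t(x)/t$, it suffices to show that each of the functions $x \mapsto K^A_t(x)$ is Borel, because a pointwise $\liminf$ of a countable sequence of Borel functions is Borel.

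First I would fix $t \in \omega$ and argue that $x \mapsto K^A_t(x)$ is upper semicontinuous. Unpacking \Cref{dfn:efffff} (relativised to $A$), we have $K^A_t(x) \leq k$ if and only if there exists some $q \in \qq$ with $K^A(q) \leq k$ and $q \in B_{2^{-t}}(x)$, equivalently $x \in B_{2^{-t}}(q)$. Therefore
\[
    \{x \in \rr : K^A_t(x) \leq k\} = \bigcup \{B_{2^{-t}}(q) : q \in \qq,\ K^A(q) \leq k\},
\]
which is a union of open balls and hence open in $\rr$. This exhibits every sublevel set of $K^A_t$ as open, so $K^A_t$ is upper semicontinuous and in particular Borel measurable, as is $K^A_t/t$.

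Taking the $\liminf$ over $t$ of the Borel functions $x \mapsto K^A_t(x)/t$ yields a Borel function $\dim^A \colon \rr \to [0,1]$, and so $\{x \in \rr : \dim^A(x) = a\}$ is Borel. Note that if $a \notin [0,1]$ the set is empty, which is trivially Borel. I do not anticipate any substantive obstacle here; the key observation is simply the openness of the sublevel sets of $K^A_t$, which follows directly from the definition of $K^A_t$ as an infimum over rational approximations inside an open ball.
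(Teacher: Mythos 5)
Your argument is correct and matches the paper's proof essentially step for step: both reduce to showing $x \mapsto K^A_t(x)$ is Borel by writing the sublevel set as a countable union of open balls $B_{2^{-t}}(q)$ over rationals $q$ of low complexity, then invoke the fact that a countable $\liminf$ of Borel functions is Borel. The only cosmetic difference is that you phrase the key step in terms of upper semicontinuity and use $\leq k$ where the paper uses $< c$.
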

\begin{proof}
We need to show that $\dim$ as a function of $x$ is Borel measurable. Recall its definition
\[
	\dim(x) = \liminf_{n \rightarrow \infty} \frac{K_n(x)}{n} = \liminf_{n \rightarrow \infty} \frac{\min \set{K(q)}{q \in \qq \cap B_{2^{-n}}(x)}}{n}.
\]
Since the $\liminf$ of a sequence of Borel measurable functions is itself Borel measurable, it suffices to show that $K_n(x) = \min \set{K(q)}{q \in \qq \cap B_{2^{-n}}(x)}$ is Borel measurable. This is easily seen: observe that
\begin{align*}
	K_n(x) < c &\iff \exists q \in \qq \cap B_{2^{-n}}(x) (K(q) < c)\\
	&\iff x \in \bigcup_{q \in \mathcal{K}(c)} B_{2^{-n}}(q)
\end{align*}
where $\mathcal{K}(c) = \set{p \in \qq}{K(p) < c}$. Hence $K_n$ is Borel measurable, and thus so is $\dim$. This argument relativises.
\end{proof}

\begin{proof}[Proof of \Cref{thm:firstThm}]
	We use \cref{thm:zoltansThm} and define $F \subset \mathbb{D}^{\leq \omega} \times [0,\pi/2] \times \mathbb{D}$ such that
\begin{center}
$(A,\varphi,(r,\theta)) \in F$ if and only if\\
$\varphi = \theta$ and for all $(r',\theta') \in \ran(A)$ we have
\begin{align*}
	\dim(r\lvert\cos(\varphi - \theta')\rvert) = \dim(r\lvert\cos(\varphi + \pi/2 - \theta')\rvert) = 0.
\end{align*}
\end{center}
In particular, observe that every point witnessing that condition $\varphi$ is satisfied lies on the line $L_{\varphi}$.

In order to apply \cref{thm:zoltansThm}, we must show that $F$ is co-analytic; but this follows immediately from \cref{lem:dimIsBorelFunc}. Hence let $\varphi \in [0,\pi/2]$. We now focus on the sections of $F$: by definition, given $\alpha < \omega_1$ we have
\[
	F(A,\varphi) = \set{(r,\theta)}{(A,\varphi,(r,\theta)) \in F}.
\]
Suppose $A = \set{(r_i,\theta_i)}{i < \omega} \in \mathbb{D}^{\leq\omega}$, and hence countable. Let
\begin{align*}
	a_i = \lvert\cos(\varphi - \theta_i)\rvert \text{ and } b_i = \left\lvert\cos\left(\varphi + \pi/2 - \theta_i\right)\right\rvert.
\end{align*}
Observe that, by construction, we have $(r,\theta) \in F(A,\varphi)$ if and only if $\theta = \varphi$ and $\dim(ra_i) = \dim(rb_i) = 0$ for all $i < \omega$. Now \cref{prp:turingCofinality} implies that this section is cofinal in the Turing degrees. Therefore, using \cref{lem:dimIsBorelFunc}, we see that \cref{thm:zoltansThm} is applicable: there exists a co-analytic set
\[
	E = \set{(r_{\alpha},\theta_{\alpha})}{\alpha < \omega_1} \subset \rr^2
\]
which is compatible with $F$. In particular, there exist enumerations $\set{\varphi_{\alpha}}{\alpha < \omega_1} = [0,\pi/2]$ and $\set{A_{\alpha}}{\alpha < \omega_1}$ of $A_{\alpha} = \set{(r_{i}, \theta_{i})}{i < \omega} = E \upharpoonright \alpha$ such that for each $\alpha < \omega_1$,
\[
	(r_{\alpha},\theta_{\alpha}) \in F(A_{\alpha},\varphi_{\alpha}).
\]
In particular, $\theta_{\alpha} = \varphi_{\alpha}$.

For the verification, let $\varphi \in [0,\pi]$. We show that $\dim_H(\proj_{\varphi}(E)) = 0$. By \cref{lem:projPth}, it suffices to show that $\dim_H(E(\varphi)) = 0$, where $E(\varphi) = \set{r \lvert\cos(\varphi - \theta)\rvert}{(r,\theta) \in E}$. This is what we show below.

Firstly, observe that either $\varphi = \varphi_{\delta} \in [0,\pi/2]$ for some $\delta < \omega_1$; or $\varphi = \varphi_{\delta} + \pi/2 \in (\pi/2,\pi]$ for some $\varphi_{\delta} \in [0,\pi/2]$. Let $\delta$ be such, and recall that $E = \set{(r_{\alpha},\theta_{\alpha})}{\alpha < \omega_1}$. We consider the points that were enumerated before condition $\varphi_\delta$ and those enumerated after $\varphi_\delta$ separately.

\begin{itemize}
	\item[$\leq \delta$:] At condition $\varphi_\delta$, define (analogous to \cref{lem:countDimZero}) the oracle
	\[
		X = \bigoplus \Set{\str{r_{\beta}\lvert\cos(\varphi_{\delta} - \theta_{\beta})\rvert},\str{r_{\beta}\lvert\cos(\varphi_{\delta} + \pi/2 - \theta_{\beta})\rvert}}{\beta \leq \delta}.
	\]
	Then $X$ computes $r_{\beta}\lvert\cos(\varphi_{\delta} - \theta_{\beta})\rvert$ and $r_{\beta}\lvert\cos(\varphi_{\delta} + \pi/2 - \theta_{\beta})\rvert$ for all $\beta \leq \delta$. Since either $\varphi = \varphi_{\delta}$ or $\varphi = \varphi_{\delta} + \pi/2$, \cref{lem:computableRealZeroDim} implies in particular that
	\[
		\dim^X(r_{\beta}\lvert\cos(\varphi - \theta_{\beta})\rvert) = 0
	\]
	for all $\beta \leq \delta$.
	\item[$> \delta$:] We show that for every $\beta > \delta$ we have $\dim(r_{\beta}\lvert\cos(\varphi - \theta_{\beta})\rvert) = 0$. Let $\delta < \beta < \omega_1$. Then $(r_{\beta},\theta_{\beta}) \in F(A_{\beta},\varphi_{\beta}) = F(E \upharpoonright \beta, \varphi_{\beta})$. But the conditions we have already attended to at stage $\beta$ are exactly the angular coordinates of the points enumerated into $E \upharpoonright \beta$; in particular, $E \upharpoonright \beta = \set{(r_{\alpha},\varphi_{\alpha})}{\alpha < \beta}$. So for all $\gamma < \beta$, again by definition of $F$, we have
	\[
		\dim(r_{\beta}\lvert\cos(\varphi_{\gamma} - \theta_{\beta})\rvert) = \dim(r_{\beta}\lvert\cos(\varphi_{\gamma} + \pi/2 - \theta_{\beta})\rvert) = 0.
	\]
	Since $\delta < \beta$ and either $\varphi = \varphi_{\delta}$ or $\varphi = \varphi_{\delta} + \pi/2$ we have in particular
	\[
		\dim(r_{\beta}\lvert\cos(\varphi - \theta_{\beta})\rvert) = 0.
	\]
	We picked $\delta < \beta < \omega_1$ arbitrarily, hence this holds for all such $\beta$, as required.
\end{itemize}
Thus, by the point-to set principle \cref{thm:pts} and \cref{lem:projPth}, we have
\begin{align*}
\dim_H(\proj_{\varphi}(E)) &= \dim(E(\varphi))\\
	 &= \min_{A \in 2^{\omega}} \sup_{\alpha < \omega_1} \dim^A(r_{\alpha}\lvert\cos(\varphi - \theta_{\alpha})\rvert)\\
	&\leq \sup_{\alpha < \omega_1} \dim^X(r_{\alpha}\lvert\cos(\varphi - \theta_{\alpha})\rvert)\\
	&= 0.
\end{align*}
Now $\dim_H(E) \geq 1$ by \cref{lem:meetsEveryLine}.
\end{proof}

Finally, the fact that $\dim_H(E) = 1$ is a consequence of the following corollary.

\begin{cor}\label{cor:optimalDim}
	Suppose $E \subset \mathbb{D}$. Then $\dim_H(\proj_{\theta}(E)) \geq \dim_H(E) - 1$.
\end{cor}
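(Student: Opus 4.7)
The plan is to reduce the set-level inequality to a pointwise complexity bound via two applications of the point-to-set principle \Cref{thm:pts}. Concretely, I would establish that for every $x \in \mathbb{D}$ and every oracle $A$,
\[
\dim^{A \oplus \theta}(x) \leq \dim^{A \oplus \theta}(\proj_\theta(x)) + 1,
\]
and then combine this pointwise bound with \Cref{thm:pts} applied to both $E$ and $\proj_\theta(E)$.

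For the pointwise inequality, I would decompose $x$ in the orthonormal frame $u = (\cos\theta,\sin\theta)$ and $v = (-\sin\theta,\cos\theta)$ adapted to $L_\theta$: write $x = s u + h v$, where $s = \langle x,u\rangle$ and $h = \langle x, v\rangle$ both lie in $[-1,1]$ because $x \in \mathbb{D}$. Since $u$ and $v$ are computable from any oracle containing $\theta$, standard Lipschitz-style computations (of exactly the kind carried out in \Cref{lem:firstHalf} and \Cref{lem:secondLemma}) show that $K^{A\oplus\theta}_t$ of $x$ and of the pair $(s,h)$ agree up to an additive $O(\log t)$, and similarly that $K^{A\oplus\theta}_t$ of $s$ and of $\proj_\theta(x) = su$ agree up to $O(\log t)$. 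Using subadditivity of $K$ applied to the string encodings of \Cref{sec:codings},
\[
K^{A\oplus\theta}_t(s,h) \leq K^{A\oplus\theta}_t(s) + K^{A\oplus\theta}_t(h) + O(\log t),
\]
and the trivial upper bound $K^{A\oplus\theta}_t(h) \leq t + O(\log t)$ coming from $h \in [-1,1]$, I combine these into
\[
K^{A\oplus\theta}_t(x) \leq K^{A\oplus\theta}_t(\proj_\theta(x)) + t + O(\log t).
\]
Dividing by $t$ and taking $\liminf_{t\to\infty}$ yields the desired pointwise bound.

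To finish, fix any oracle $B$. Applying \Cref{thm:pts} to $E$ with oracle $B \oplus \theta$ and using the pointwise bound,
\[
\dim_H(E) \leq \sup_{x \in E} \dim^{B\oplus\theta}(x) \leq 1 + \sup_{x \in E} \dim^{B\oplus\theta}(\proj_\theta(x)) \leq 1 + \sup_{y \in \proj_\theta(E)} \dim^{B}(y),
\]
where the last inequality uses that enlarging the oracle can only decrease relative effective dimension. Taking the infimum over $B$ and applying \Cref{thm:pts} to $\proj_\theta(E)$ gives $\dim_H(E) \leq \dim_H(\proj_\theta(E)) + 1$, as required. The only technical nuisance I anticipate is tracking the $O(\log t)$ overheads when passing back and forth between $x$, $(s,h)$, and $\proj_\theta(x)$ through the $\theta$-computable rotation; but these are exactly the routine bookkeeping steps already developed in \Cref{sec:polar} for the polar/Cartesian equivalence, and they vanish upon dividing by $t$ and taking the $\liminf$.
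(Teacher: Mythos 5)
Your proof is correct and substantially more carefully argued than the paper's, but the core idea is the same: recovering a point from its projection onto $L_\theta$ requires at most one additional real number of effective dimension $\leq 1$, so the pointwise dimension can drop by at most $1$ under projection, and the point-to-set principle \cref{thm:pts} lifts this to Hausdorff dimension. The difference is the choice of decomposition. The paper stays in polar coordinates and uses the relation $r = s\lvert\cos(\theta - \rho)\rvert$ from \cref{lem:polarCoords}, treating the original angle $\rho$ as the missing piece of information so that $\dim(s,\rho) \leq \dim(r,\theta) + 1$. You instead pass to the orthonormal frame $u = (\cos\theta,\sin\theta)$, $v = (-\sin\theta,\cos\theta)$ adapted to $L_\theta$, write $x = su + hv$, and treat the perpendicular offset $h$ as the missing coordinate. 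Your decomposition is arguably the more natural one --- it is literally the orthogonal complement of the projection --- and it keeps all the Lipschitz and $O(\log t)$ bookkeeping uniform, whereas the paper's polar relation has a degeneracy at $\cos(\theta-\rho)=0$ that goes unremarked. You are also more explicit about two steps the paper leaves implicit: relativizing the complexity bounds to $\theta$ (necessary since the rotation, respectively the polar relation, depends on $\theta$), and invoking the point-to-set principle on both $E$ and $\proj_\theta(E)$ to pass from the pointwise inequality to the set-level one, including the observation that enlarging the oracle to $B\oplus\theta$ can only decrease $\sup_{y}\dim^{B\oplus\theta}(y)$. Both proofs are sound; yours would serve well as a cleaned-up replacement.
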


\begin{proof}
	Suppose $(r,\theta) \in \proj_{\theta}(E)$. By \cref{lem:polarCoords}, we know that $r = s \lvert \cos(\theta - \rho) \rvert$ for some $(s,\rho) \in E$. But this means there is only one piece of information missing: from $(r,\theta)$, we can compute $s$ from $\rho$, and vice versa. Hence suppose $\dim(r,\theta) = \epsilon$. Since $\dim(s),\dim(\rho) \leq 1$ we see that $\dim(s,\rho) \leq \dim(r,\theta) + 1$, which is as desired.
\end{proof}

\subsection{Proving \texorpdfstring{\Cref{prp:turingCofinality}}{the Turing cofinality proposition}}\label{sec:provingCofinality}

An interval is \emph{(open) dyadic} if it is of the form $(j/2^k,(j+1)/2^k)$. Intervals of the form $[j/2^k,(j+1)/2^k]$ are \emph{closed dyadic}. Observe that if $x \in (j/2^k,(j+1)/2^k)$ then $|x - j/2^k| \leq 2^{-k}$, and hence $x$ and $j/2^k$ agree on the first $k$ bits in their binary expansion: both start with the binary expansion of $j$.

In the results below, we work with open intervals in $(0,1)$. All reals are expressed in binary. Instead of manipulating intervals directly, we will argue in terms of dyadic reals, which we will express by their finite binary expansion. For this, we introduce the following notation. If $\sigma \in 2^{\leq\omega}$, let $\real{\sigma} = 0.\sigma \in \rr$. If $\sigma \in \fcant$, let $\realo{\sigma} = 0.\sigma1^{\infty} \in \rr$; let $[\real{\sigma}]$ denote the open interval $(\real{\sigma},\realo{\sigma})$. If $a \in \rr$ then $a[\real{\sigma}] = (a\real{\sigma},a\realo{\sigma})$.

Some basic facts that follow directly from our definitions are:
\begin{itemize}
	\item If $\sigma \in \fcant$ then $[\real{\sigma}]$ is a dyadic interval; so if $x \in [\real{\sigma}]$ then $x$ and $\real{\sigma}$ agree on the initial segment of length $\len(\sigma)$. We can think of \emph{$x$ extending $\real{\sigma}$}.
	\item Conversely, if $I$ is dyadic and $\sigma \in \fcant$ is such that $\real{\sigma}$ is the left-end point of $I$, then $I = [\real{\sigma}]$.
	\item If $I$ is dyadic and $\sigma \in \fcant$ is such that $\real{\sigma} \in I$ then $[\real{\sigma}] \subset I$.
	\item In particular, if $\sigma,\rho \in \fcant$ then $\sigma \prec \rho$ if and only if $\real{\rho} \in [\real{\sigma}]$.
\end{itemize}

\begin{lem}\label{lem:extExist}
	Let $\sigma \in \fcant$ and $a \in (0,1)$. Suppose $0 < \epsilon < 1$. There exist $\rho,\tau \in \fcant$ such that:
	\begin{enumerate}
		\item $\sigma \prec \rho$
		\item $a[\real{\rho}] \subset [\real{\tau}]$
		\item $K(\tau)/\len(\tau) < \epsilon$
	\end{enumerate}
\end{lem}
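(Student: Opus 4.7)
The plan is to construct $\tau$ as a short ``selector'' $\xi$ padded with many zeros, and $\rho$ as a long extension of $\sigma$. The zero-padding forces $K(\tau)/\len(\tau)$ to be small, while taking $\rho$ long makes $a[\real{\rho}]$ narrow enough to fit inside the tiny dyadic interval $[\real{\tau}]$ dictated by $\xi$.

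First, I would choose a string $\xi \in \fcant$ such that $[\real{\xi}]$ is a dyadic interval strictly inside $a[\real{\sigma}]$. Since $a[\real{\sigma}]$ is an open interval of length $a \cdot 2^{-\len(\sigma)} > 0$, picking any $k$ large enough that $2 \cdot 2^{-k} < a \cdot 2^{-\len(\sigma)}$ guarantees some integer $j$ with $[j/2^k,(j+1)/2^k] \subset a[\real{\sigma}]$; I would then let $\xi$ be the $k$-bit binary representation of $j$, so $\real{\xi} = j/2^k$ and $[\real{\xi}]$ is that dyadic interval. The length $k$ of $\xi$ depends only on $\len(\sigma)$ and $\log(1/a)$.

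Second, I would pad by setting $\tau = \xi 0^N$ for $N$ to be chosen. Then $\real{\tau} = \real{\xi}$ and $\realo{\tau} = \real{\xi} + 2^{-k-N}$, so $[\real{\tau}]$ sits at the left end of $[\real{\xi}]$ with length $2^{-k-N}$. Using $K(\tau) \leq K(\xi) + K(N) + O(1) \leq k + O(\log k) + O(\log N)$, choosing $N$ sufficiently large relative to $k$ makes $K(\tau)/(k+N) < \epsilon$, securing the third condition.

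Third, I would locate a suitable $\rho$. The required containment $a[\real{\rho}] \subset [\real{\tau}]$ rewrites as $[\real{\rho}] \subset \bigl(\real{\xi}/a,\,(\real{\xi} + 2^{-k-N})/a\bigr)$, a target interval of length $2^{-k-N}/a$ that sits inside $[\real{\sigma}]$ because $[\real{\xi},\,\real{\xi} + 2^{-k-N}] \subset [j/2^k,(j+1)/2^k] \subset a[\real{\sigma}]$. Taking $\len(\rho) \geq k + N + \lceil \log(1/a) \rceil + 2$ guarantees that some level-$\len(\rho)$ dyadic interval fits entirely inside this target; letting $[\real{\rho}]$ be any such interval simultaneously yields $a[\real{\rho}] \subset [\real{\tau}]$ and $[\real{\rho}] \subset [\real{\sigma}]$, the latter giving $\sigma \prec \rho$.

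The main difficulty is purely bookkeeping: the parameters must be fixed in the correct dependency order ($k$ first, from $\sigma$ and $a$; then $N$, large enough to suppress $K(\tau)/\len(\tau)$; finally $\len(\rho)$, large enough to admit a dyadic interval inside the scaled target), and one must keep the strict-vs-non-strict inclusions straight when passing between the open intervals $[\real{\xi}]$, $[\real{\tau}]$, $a[\real{\rho}]$, and $[\real{\sigma}]$. Once the parameters are laid out in this order, the three conditions follow directly from the construction.
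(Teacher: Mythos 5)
Your proof is correct and follows essentially the same strategy as the paper's: locate a dyadic interval inside $a[\real{\sigma}]$ whose left-endpoint string is the seed, pad with zeros to drive down the complexity ratio, and then pull the resulting dyadic interval back through $a^{-1}$ to find $\rho$ with $[\real{\rho}]\subset a^{-1}[\real{\tau}]\subset[\real{\sigma}]$. The only differences are presentational: you pick explicit parameter thresholds (the bounds on $k$, $N$, and $\len(\rho)$) and spell out the estimate $K(\xi 0^N)\leq k+O(\log k)+O(\log N)$, whereas the paper instead selects the \emph{largest} closed dyadic interval at each step and cites the existence of a suitable $s$ as a standard fact.
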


\begin{proof}
	Let $\sigma,a$ and $\epsilon$ be given. Consider $a[\real{\sigma}]$. Since $[\real{\sigma}]$ is open, so is $a[\real{\sigma}]$, and thus it contains a closed dyadic interval. Take the largest (in diameter) such interval $I$, and pick $\tau' \in \fcant$ such that $\real{\tau}'$ is the left end-point of $I$. By closedness, $\real{\tau}' \in a[\real{\sigma}]$. By standard results on Kolmogorov complexity, there exists a smallest $s < \omega$ such that $\tau = \tau'0^s$ satisfies
	\[
		\frac{K(\tau)}{\len(\tau)} < \epsilon.
	\]
	In particular, $\real{\tau}' = \real{\tau} \in I$. Now consider $[\real{\tau}]$, which is open and hence so is $a^{-1}[\real{\tau}]$. Let $J$ denote the largest closed dyadic interval contained in $a^{-1}[\real{\tau}]$, and call its left end-point $d$. Again by closedness, $d \in a^{-1}[\real{\tau}]$. Let $\rho \in \fcant$ be such that $\real{\rho} = d$.
	
	\begin{figure}
		\includegraphics{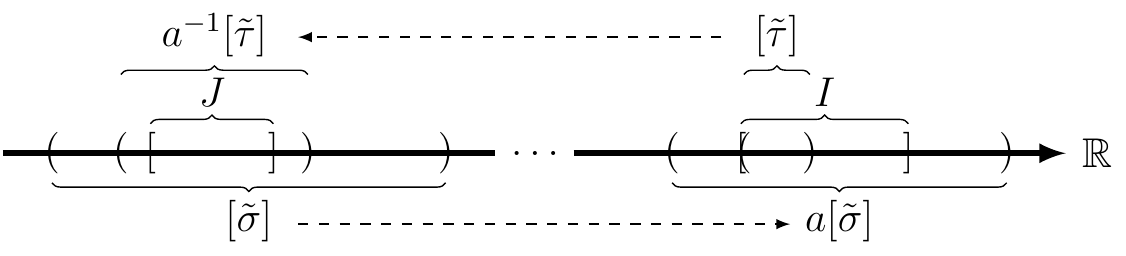}
		\caption{We start on the left and argue anti-clockwise: considering $a[\real{\sigma}]$ yields an open interval; the largest closed dyadic interval inside is $I$. Picking a suitable $\real{\tau} \in I$ yields $a^{-1}[\real{\tau}]$. The largest dyadic interval contained in it is $J$ with left end-point $d = \real{\rho}$. Hence $[\real{\rho}] \subset J$, where in fact the interior of $J$ equals $[\real{\rho}]$.}\label{fig:miamiFig}
	\end{figure}
	
	Now $\sigma \prec \rho$: by construction, $\real{\rho} = d \in J \subset a^{-1}[\real{\tau}]$. The string $\tau$ properly extends $\tau'$, thus $[\real{\tau}] \subset [\real{\tau}']$. Since $\real{\tau}'$ is the left end-point of $I$, the interior of $I$ equals $[\tilde{\tau}']$. Hence $[\real{\tau}] \subset [\real{\tau}'] \subset I \subset a[\real{\sigma}]$, and so $\real{\rho} \in a^{-1}[\real{\tau}] \subset a^{-1}(a[\real{\sigma}]) = [\real{\sigma}]$ as needed. Further, $a[\real{\rho}] \subset [\real{\tau}]$, since $[\real{\rho}] \subset J \subset a^{-1}[\real{\tau}]$ with all inclusions proper. This completes the argument.
\end{proof}

In order to achieve cofinality in the Turing degrees when constructing a suitable $r \in (0,1)$, we need to satisfy each condition (as per \cref{item:realExists} in \cref{sec:roadmap}) while coding a given oracle $A \in \cant$ into $r$. Let
\[
	\nu(k) = 2^{2^k}
\]
determine at which bits of $r$ to code $A$. We will use the gaps in between the range of $\nu$ to satisfy the conditions. We call $\nu$ the \emph{folding map}.

\subsubsection{The construction of $r$}\label{par:construction}
Suppose $(a_i)$ is the set of conditions, where $a_i \in (0,1)$ for all $i < \omega$. We construct $r \in (0,1)$ in stages, by determining its binary expansion, which is given by successive extensions $x_0 \prec x_1 \prec x_2 \prec \ldots$ with $x_i \in \fcant$. We argue by induction on $\omega$.
\begin{enumerate}[label=(\arabic*)]
	\item Let $A \in \cant$ be given.
	\item Let $x_0 = \emptyset$, the empty string.
	\item Let $x_k$ be given. At stage $k + 1$, decode $k + 1 = \langle i,n \rangle$ via Cantor's pairing function, for instance, and attend to requirement $i$. Hence we attend to each requirement infinitely often.
	\item Apply \cref{lem:extExist} with $a = a_i$ and $\epsilon = \frac{1}{k}$ to obtain a suitable extension $\rho_k \succ x_k$.
	\item Let $t = \nu(k+1) - \len(\rho_k) - 1$ and $d = A(k)$ and define
	\[
		x_{k+1} = \begin{cases}
			\rho_k0^td & \text{if $\len(\rho_k) < \nu(k+1)$}\\
			(\rho_k \upharpoonright (\nu(k+1) - 1))d & \text{otherwise.}
		\end{cases}\label{eq:defR}
	\]
	Therefore, if $k > 0$ then $\len(x_k) = \nu(k)$ by induction. \label{item:lengthXk}
	\item Define $x = \bigcup_{k < \omega} x_k$, and hence let $r = \real{x}$.
	\item Observe that $A$ is computably \emph{folded into $x$}: for all $k < \omega$, we have $x(\nu(k+1)-1) = A(k)$.
\end{enumerate}

In order to complete the proof of \cref{prp:turingCofinality}, we need to ensure that the second case in the equation in \cref{eq:defR} only occurs finitely often for each requirement $a_i$. The following lemma assures us that this is indeed the case.

Before we proceed with the proof, another couple of useful facts about intervals follow. Let $(x,y) \subset (0,1)$.
\begin{enumerate}[label=(\roman*)]
	\item By $\diam((x,y)) = y - x$ we denote the diameter of $(x,y)$. If $\sigma \in \fcant$ then $\diam([\real{\sigma}]) = 2^{-\len(\sigma)}$. In particular, $-\log(\diam([\real{\sigma}])) = \len(\sigma)$. \label{item:logLength1}
	\item If $k < \omega$ is such that $k \geq -\log(\diam((x,y))) + 2$ then there exists $j < \omega$ such that the closed dyadic interval $[j/2^k,(j+1)/2^k] \subset (x,y)$. \label{item:logLength2}
\end{enumerate}

\begin{lem}\label{lem:eventuallySpace}
	For each $a_i \in (0,1)$ there exists $M_i < \omega$ such that if $k + 1 > M_i$ and $k + 1 = \langle i,n \rangle$ attends to requirement $a_i$, then $\len(\rho_k) < \nu(k + 1)$.
\end{lem}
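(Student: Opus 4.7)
The plan is to trace carefully through the proof of Lemma~\ref{lem:extExist}, at the stage $k+1$ attending to requirement $a_i$, so as to obtain an explicit bound on $\len(\rho_k)$ of the form $\text{poly}(k)\cdot \len(x_k)$, and then to exploit the doubly exponential growth of $\nu$ (recall $\nu(k+1) = 2^{2 \cdot 2^k} = \nu(k)^2$) to conclude. Write $C_i = \log(1/a_i) + 2$; this depends only on $i$.

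Concretely, I would bound the three objects produced by the lemma in turn, relying on items~(i) and~(ii) of the basic facts about dyadic intervals stated just before the lemma. Since $\real{\tau'_k}$ is the left endpoint of the largest closed dyadic interval inside $a_i[\real{x_k}]$, which has diameter $a_i \cdot 2^{-\len(x_k)}$, item~(ii) yields
\[
  \len(\tau'_k) \leq \len(x_k) + C_i.
\]
Next, since $\tau_k = \tau'_k 0^{s_k}$ with $s_k$ minimal making $K(\tau_k)/\len(\tau_k) < 1/k$, the bound $K(\tau_k) \leq \len(\tau'_k) + 2\log(\len(\tau'_k)) + 2\log(s_k) + O(1)$ from prefix-free subadditivity shows that it suffices to take $s_k$ of order $k \cdot \len(\tau'_k)$, so
\[
  \len(\tau_k) \leq (2k+1)(\len(x_k) + C_i)
\]
once $k$ is large enough that the logarithmic overhead fits. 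Finally $[\real{\rho_k}]$ is the largest closed dyadic interval inside $a_i^{-1}[\real{\tau_k}]$, whose diameter is $a_i^{-1} \cdot 2^{-\len(\tau_k)}$, so $\len(\rho_k) \leq \len(\tau_k) + C_i$, giving
\[
  \len(\rho_k) \leq (2k+1)(\len(x_k) + C_i) + C_i.
\]

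Now I would substitute $\len(x_k) = \nu(k) = 2^{2^k}$ (which holds for $k \geq 1$ by item~\ref{item:lengthXk} of the construction) and compare with $\nu(k+1) = (2^{2^k})^2$. The left side grows like $k \cdot 2^{2^k}$ while the right grows like $2^{2 \cdot 2^k}$, and the ratio $\nu(k+1)/(k \cdot \nu(k)) = 2^{2^k}/k$ tends to infinity, so $\len(\rho_k) < \nu(k+1)$ for all $k$ beyond some threshold $M_i$ that depends only on $C_i$, hence only on $i$. Since the pairing function $\langle i, \cdot \rangle$ yields cofinally many indices $k+1$ attending to requirement $a_i$, this suffices. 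The main obstacle is the Step~2 estimate on $s_k$: one must verify that the prefix-free complexity overhead is dominated by $\len(\tau'_k) \geq \nu(k)$; but this is comfortable because $\nu(k)$ already grows doubly exponentially in $k$, leaving plenty of slack for the $O(\log s_k) + O(\log \len(\tau'_k))$ terms.
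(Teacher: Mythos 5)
Your proof is correct and follows essentially the same route as the paper's: bound $\len(\tau'_k)$, $\len(\tau_k)$, and $\len(\rho_k)$ via the dyadic-interval facts (giving additive $O(1)$ losses at each transfer), show via prefix-free subadditivity that $s_k$ of order $k\cdot\len(\tau'_k)$ forces $K(\tau_k)/\len(\tau_k) < 1/k$ because the logarithmic overhead is dwarfed by $\len(\tau'_k)\geq\nu(k)$, and then use $\nu(k+1)=\nu(k)^2$ to absorb the resulting $O(k\nu(k))$ bound. The paper tracks the constants a bit more explicitly (splitting into three terms each $<1/(3k)$ and taking $s=12k\nu(k)+1$), but the structure and key estimates are the same.
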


\begin{proof}
	Fix some $a_i = a$ and suppose we are at stage $k + 1 = \langle i,n \rangle$. Let $\rho = \rho_k$. Recall that $\real{\rho} \in J \subset a^{-1}[\real{\tau}]$. Observe that $\diam(a^{-1}[\real{\tau}]) = a^{-1}2^{-\len(\tau)}$. Now, since $J$ is defined to be the maximal (in diameter) closed dyadic interval inside $a^{-1}[\real{\tau}]$, and since $\real{\rho}$ is the left end-point of $J$, \cref{item:logLength1,item:logLength2} imply
	\begin{align*}
		\len(\rho) &\leq -\log(\diam(a^{-1}[\real{\tau}])) + 2\\
		&= \log(a) - \log\left(2^{-\len(\tau)}\right) + 2 = \log(a) + \len(\tau) + 2.
		\intertext{Recall that $\tau =\tau'0^s$, and hence}
		\len(\rho) &\leq \log(a) + \len(\tau') + s + 2.
		\intertext{Recall that $\rho$ is an extension of $x_k$ (so $x_k = \sigma$ in \cref{lem:extExist}). By construction, $\real{\tau}' \in I \subset a[\real{x}_k]$, where $I$ is dyadic maximal in $a[\real{x}_k]$. Therefore}
		\len(\tau') &\leq -\log(\diam(a[\real{x}_k])) + 2 = -\log(a) + \len(x_k) + 2
		\intertext{from which we obtain via \cref{item:lengthXk} that}
		\len(\rho) &\leq \len(x_k) + s + 4 = \nu(k) + s + 4
	\end{align*}
	Consider some stage $k+1$. We are building $x_{k+1} \succ x_k$, where $\len(x_{k+1}) = \nu(k+1)$. Our construction is successful if we need not truncate $\rho$ (as in the latter case in \cref{eq:defR}). In such a case, $\len(\rho) < \len(x_{k+1}) = \nu(k+1)$. Hence it suffices to show that, eventually, $s < \nu(k+1) - \nu(k) - 4$.
	
	Recall that $s$ is chosen so that $\frac{K(\tau)}{\len(\tau)} = \frac{K(\tau'0^s)}{\len(\tau') + s} < \frac{1}{k}$. Simplify this as follows:
	\begin{align*}
		\frac{K(\tau'0^s)}{\len(\tau') + s} &\leq \frac{K(\tau') + K(0^s) + c'}{s}\\
		&\leq \frac{K(\tau')}{s} + \frac{K(s)}{s} + \frac{c''}{s}\\
		&\leq \frac{\len(\tau') + 2\log(\len(\tau'))}{s} + \frac{\log(s) + 2\log(\log(s) + 1)}{s} + \frac{c}{s}
	\end{align*}
	for a sum of machine constants $c$.
	
	These terms are easily bounded. Clearly, $\frac{c}{s} < \frac{1}{3k}$ if $s > 3kc$. For the middle term, observe that if $s \geq 2$ then $\log(s) + 2\log(\log(s) + 1) < 3\log(s)$. Hence,
	\[
		\frac{\log(s) + 2\log(\log(s) + 1)}{s} < \frac{3\log(s)}{s}.
	\]
	Since $\log(s)/s$ is monotonically decreasing, if $s > 2^k$ then 
	\[
		\frac{3\log(s)}{s} < \frac{3\log\left(2^k\right)}{2^k} = \frac{3k}{2^k}.
	\]
	Then $\frac{3k}{2^k} < \frac{1}{3k}$ if $9k^2 < 2^k$ which holds for $k \geq 10$. Hence, for large enough $k$, the bound $s > 2^k$ suffices.
	
	For the first term, recall that $\len(\tau') \leq -\log(a) + \nu(k) + 2$. Since $a \in (0,1)$ we know $-\log(a) > 0$. So, for large enough $k$, it follows that
	\begin{align*}
		\frac{\len(\tau') + 2\log(\len(\tau'))}{s} &\leq \frac{-\log(a) + \nu(k) + 2 + 2\log(-\log(a) + \nu(k) + 2)}{s}\\
		&\leq \frac{-\log(a) + 3\nu(k)}{s}
		\end{align*}
	Since $a$ is fixed we have, for large enough $k$, that
	\begin{align*}
		\frac{\len(\tau') + 2\log(\len(\tau'))}{s} \leq \frac{-\log(a) + 3\nu(k)}{s} \leq \frac{4\nu(k)}{s}.
	\end{align*}
	Now observe that $\frac{4\nu(k)}{s} \leq \frac{1}{3k}$ if $s > 12k\nu(k)$. Choosing a large enough $k$ we hence see that $s > \max \left\{ 3kc, 2^k, 12k\nu(k) \right\}$ suffices, which, again, reduces to $s > 12k\nu(k)$ once $k$ is large enough.
	
	Finally, note that $12k\nu(k) + 1 < \nu(k+1) - \nu(k) - 4$ for $k \geq 3$. Thus, once $k$ is sufficiently large to satisfy all conditions above, $s = 12k\nu(k) + 1$ satisfies $\frac{K(\tau'0^s)}{s} < \frac{1}{k}$ while $s < \nu(k+1) - \nu(k) - 4$. So, eventually, $\len(\rho)$ is small enough, as required.
\end{proof}

\begin{proof}[Proof of \Cref{prp:turingCofinality}]
	Let $A \in 2^{\omega}$ be given, and suppose $(a_i)$ is the countable sequence of requirements. Construct $x = \bigcup_{k < \omega} x_k$ as in \cref{par:construction}. Let $r = \real{x}$. From \cref{sec:codings} and \cref{dfn:turingCofinal} it is easily seen that $A$ can be obtained computably from the binary expansion of $r$. Hence we only need to show that the dimension of $a_ir$ is minimal. Fix $i < \omega$ and consider $a_i$. By \cref{lem:eventuallySpace} there exists $M$ such that if $k > M$ and $k = \langle i,n \rangle$ then $\rho_k \prec x$. For each such $k$, let $\tau_k$ be as obtained from \cref{lem:extExist} alongside $\rho_k$. Now
	\begin{align*}
		\dim(a_ir) &= \liminf_{s \rightarrow \infty} \frac{K(\str{a_ir}[s])}{s}
		\intertext{by \cref{cor:dimApprox}. By construction, $a_i[\real{\rho}_k] \subset [\real{\tau}_k]$ and $a_i\real{\rho}_k \in [\real{\tau}_k]$. Thus $a_ir = a_i\real{x} \in [\real{\tau}_k]$. Further, $K(\tau_k)/\len(\tau_k) < 1/k$. Let $D = \set{k > M}{ k = \langle i,n \rangle \text{ for some $n$}}$. Then}
		\dim(a_ir) &\leq \liminf_{k \rightarrow \infty,\, k \in D} \frac{K(\str{a_ir}[\len(\tau_k)])}{\len(\tau_k)}\\
		&\leq \liminf_{k \rightarrow \infty,\, k \in D} \frac{K(\tau_k) + c}{\len(\tau_k)}\\
		&\leq \liminf_{k \rightarrow \infty,\, k \in D} \frac{1}{k}\\
		&= 0
	\end{align*}
	where $c$ is the machine constant obtaining $\tau_k$ from $\str{a_ir}[\len(\tau_k)]$ (as per \cref{sec:codings}). This completes the proof.
\end{proof}


\section{The Proof of \texorpdfstring{\Cref{thm:secondThm}}{our second Theorem}}\label{sec:secondThm}
In this section provide a proof to the following result, which is optimal by \cref{cor:optimalDim}.

\begin{thm}\label{thm:secondThm}
	For every $0 < \epsilon < 1$, there exists a co-analytic set $E \subset \rr^2$ such that $\dim_H(E) = 1 + \epsilon$ while, for every $\theta \in [0,2\pi)$ we have $\dim_H(\proj_{\theta}(E)) = \epsilon$.
\end{thm}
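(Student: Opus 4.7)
The approach parallels Theorem~\ref{thm:firstThm} with the dimensional calibrations lifted from $0$ to $\epsilon$; as before, I work in polar coordinates via Proposition~\ref{prp:stringsNpolar} and build $E$ using Theorem~\ref{thm:zoltansThm}. I would define $F_\epsilon \subset \mathbb{D}^{\leq\omega} \times [0,\pi/2] \times \mathbb{D}$ by declaring $(A,\varphi,(r,\theta)) \in F_\epsilon$ exactly when $\theta = \varphi$ and, for every $(r',\theta') \in \ran(A)$, both $\dim(r\lvert\cos(\varphi - \theta')\rvert) \leq \epsilon$ and $\dim(r\lvert\cos(\varphi + \pi/2 - \theta')\rvert) \leq \epsilon$. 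The co-analyticity of $F_\epsilon$ follows from Lemma~\ref{lem:dimIsBorelFunc} just as for the analogous $F$ in the previous section.

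The main technical input will be an $\epsilon$-analogue of Proposition~\ref{prp:turingCofinality}: for any countable $(a_i) \subset (0,1)$, the set of $r \in (0,1)$ with $\dim(a_i r) \leq \epsilon$ for every $i$ is cofinal in the Turing degrees. I would adapt the folding construction of Section~\ref{sec:provingCofinality}, replacing the target $K(\tau)/\len(\tau) < 1/k$ with $K(\tau)/\len(\tau) < \epsilon + 1/k$. Such $\tau$ are produced by extending $\tau'$ not by a block of zeros but by a long block of the form $R 0^m$ where $R$ is Kolmogorov-random of length $\epsilon m$, so that $K(\tau) \approx \epsilon \len(\tau)$. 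The padding-length bookkeeping of Lemma~\ref{lem:eventuallySpace} survives with $1/k$ replaced by $\epsilon + 1/k$, since the dominating bound $\len(\tau') \leq 3\nu(k)$ is unchanged and is still absorbed by the gap $\nu(k+1) - \nu(k)$.

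With cofinality in hand, Theorem~\ref{thm:zoltansThm} yields a co-analytic $E = \{(r_\alpha,\theta_\alpha) : \alpha < \omega_1\}$ compatible with $F_\epsilon$ and a conditions-enumeration $\{\varphi_\alpha : \alpha < \omega_1\}$ of $[0,\pi/2]$. The verification that $\dim_H(\proj_\varphi(E)) \leq \epsilon$ for every $\varphi \in [0,\pi]$ is a direct transcription of the verification in Theorem~\ref{thm:firstThm}: writing $\varphi = \varphi_\delta$ or $\varphi_\delta + \pi/2$, the $\beta \leq \delta$ contribution is $0$ via a countable oracle (Lemma~\ref{lem:computableRealZeroDim}), and for $\beta > \delta$ the membership $(r_\beta,\theta_\beta) \in F_\epsilon(A_\beta,\varphi_\beta)$ applied at index $\delta$ gives $\dim(r_\beta\lvert\cos(\varphi - \theta_\beta)\rvert) \leq \epsilon$. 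Lemma~\ref{lem:projPth} and the point-to-set principle then deliver $\dim_H(\proj_\varphi(E)) \leq \epsilon$, and Corollary~\ref{cor:optimalDim} forces $\dim_H(E) \leq 1 + \epsilon$.

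The hard part will be the matching lower bound $\dim_H(E) \geq 1 + \epsilon$, strengthening Lemma~\ref{lem:meetsEveryLine}. Given an oracle $A$, I would pick $\theta \in [0,\pi/2]$ random relative to $A$ so that $\dim^A(\theta) = 1$; the point $(r_\theta,\theta) \in E$ enumerated at the stage with condition $\theta$ should then satisfy $\dim^A(r_\theta,\theta) \geq 1 + \epsilon$. Via a symmetry-of-information-style estimate for effective dimension, this reduces to securing $\dim^{A,\theta}(r_\theta) \geq \epsilon$. To arrange this, the reinforcement of the Proposition~\ref{prp:turingCofinality}-analogue must guarantee that the folded real $r$ not only Turing-codes the prescribed oracle but also carries effective dimension $\epsilon$ robustly, by sourcing the bits used in the folding from a sequence random relative to $A \oplus \overline{\theta}$ that has been diluted to density $\epsilon$. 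Designing this reinforcement so that $\dim(a_i r) \leq \epsilon$ is still met uniformly in $i$ and the resulting predicate remains co-analytic with Turing-cofinal sections is the most delicate point of the argument; once it is in place, the matching projection lower bound $\dim_H(\proj_\varphi(E)) \geq \dim_H(E) - 1 \geq \epsilon$ is immediate from Corollary~\ref{cor:optimalDim}.
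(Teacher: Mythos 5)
Your proposal captures the upper-bound half of the argument well: the modified predicate $F_\epsilon$, its co-analyticity via \cref{lem:dimIsBorelFunc}, the $\epsilon$-analogue of \cref{prp:turingCofinality}, and the transcription of the first theorem's verification of $\dim_H(\proj_\varphi(E)) \leq \epsilon$ are all in the right spirit, and you correctly flag the matching lower bound $\dim_H(E) \geq 1 + \epsilon$ as the delicate step. However, the structural choice you carry over from the first theorem---forcing $\theta = \varphi$, so that the point enumerated to satisfy condition $\varphi$ lies on $L_\varphi$---makes that lower bound unreachable, and this is a genuine gap rather than a technicality to be filled in.

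For the point-to-set lower bound you need: for every oracle $A$ there is $(r,\theta) \in E$ with $\dim^A(r,\theta) \geq 1 + \epsilon$. Your plan is to take $\theta$ random relative to $A$, so that $\dim^A(\theta) = 1$, and then to secure $\dim^{A,\str{\theta}}(r_\theta) \geq \epsilon$ for the radius $r_\theta$ enumerated at the stage where condition $\theta$ is active. But the construction at that stage sees only the already-enumerated set $A_\alpha$ and the condition $\varphi_\alpha = \theta$; it never sees the external $A$. Your proposed repair---sourcing the folded bits from a sequence random relative to $A \oplus \str{\theta}$---presupposes access to $A$, which the construction does not have, and since $\theta$ is random \emph{relative to} $A$, $\theta$ cannot compute $A$ either. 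The requirement ``$\dim^{A,\str{\theta}}(r) \geq \epsilon$'' quantifies over an arbitrary external oracle and cannot be encoded in $F_\epsilon$, whose data is only $(A_\alpha, \varphi_\alpha)$. This is precisely why the paper \emph{abandons} $\theta = \varphi$: at stage $\alpha$ the condition $\varphi_\alpha$ is decoded into an oracle $X$, a fresh angular coordinate $\theta$ is chosen random relative to $X$ (so in general $\theta \neq \varphi_\alpha$), and $\varphi_\alpha$ is folded into $r$ so that the conditions already attended to remain recoverable from the enumerated points. The lower bound then follows because every oracle $A$ appears as $X(\varphi_\alpha)$ for some $\alpha$; at that stage the construction genuinely knows $A$ and can source the coded sequence $T$ from a dilution of a sequence random relative to $Y \oplus \str{\theta}$ with $Y \geq_T X = A$, giving $\dim^{A,\str{\theta}}(r) \geq \dim^{Y,\str{\theta}}(r) \geq \epsilon$ and hence, by \cref{lem:supAdditive}, $\dim^A(r,\theta) \geq 1 + \epsilon$. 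Without this decoupling of the condition from the angular coordinate, there is no stage at which the construction has the data needed to make $r$ complicated against an arbitrary external oracle.

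A secondary but real concern: coding $T$ into $ra_i$ in monolithic segments $R0^m$ does not readily deliver the lower bound $\dim^{Y,\str{\theta}}(r) \geq \epsilon$, because under multiplication by the real $a_i$ it is hard to control how many bits of $r$ are determined by an initial segment of $ra_i$ and vice versa. The paper handles this by coding $T$ in carefully-sized blocks via a separate block map $\mu$, and by proving a cost lemma (\cref{lem:atMostSeven}) bounding the per-block overhead of pulling a coded block in $ra_i$ back to $r$, so that \emph{every} initial segment of $r$---not just those at the ends of stages---is shown to carry enough information. Any completed version of your construction would need some analogue of this bookkeeping.
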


Observe that this is in fact a generalisation of \cref{thm:firstThm}: the case $\epsilon=0$ is covered there. Secondly, the case $\epsilon = 1$ is trivial: if $E \subset \rr^2$ satisfies $\dim_H(E) = 2$ then \cref{cor:optimalDim} implies $1 \leq \dim_H(\proj_\theta(E)) \leq 1$ for each $\theta$. Hence our theorems exhaust all cases.

\subsection{Roadmap towards a proof.} Let $0 < \epsilon < 1$. Assuming $\VL$, we argue as follows.
\begin{enumerate}
	\item Fix an enumeration $\set{\varphi_{\alpha}}{\alpha < \omega_1}$ of $[0,\pi/2]$.
	\item At stage $\alpha$, let $A_{\alpha} = \set{(r_i,\theta_i)}{i < \omega}$, the set of all points already enumerated into our set.
	\item Let $X \in \cant$ be the sequence whose bits are made up of the binary expansion of $\varphi_{\alpha}$. In particular, $X$ is $\str{\varphi}_{\alpha}$ with its first four bits removed (cf.\ \cref{sec:codings}).
	\item We will \emph{not} satisfy condition $\varphi_{\alpha}$ by enumerating a point on $L_{\varphi_{\alpha}}$ into our set. Instead, we recover the already satisfied conditions by first coding them into $r$ using a suitable folding map: if $(r_i,\theta_i)$ was enumerated into our set at stage $\beta$, then $\varphi_{\beta}$ is folded into $r_i$, and can hence be recovered computably. Let $\set{\varphi_i}{i < \omega}$ be the set of the conditions already satisfied.
	\item Pick $\theta \in [0,\pi/2]$ such that $\str{\theta}$ is random relative to $X$.
	\item Let $(a_i)$ be an enumeration of all $\lvert\cos(\theta-\varphi_i)\rvert$ and $\lvert\cos(\theta +\pi/2 -\varphi_i)\rvert$. Let $Y$ be the join of $X$ and all $\str{a}_i$. We also assume that $Y$ can compute $\epsilon$.
	\item Construct $r \in (0,1)$ such that:
	\begin{enumerate}
		\item the binary expansion of $\varphi_\alpha$ is folded computably into the binary expansion of $r$; \label{item:foldPhi}
		\item $\dim(ra_i) = \epsilon$ for all $i < \omega$; \label{item:controlProj}
		\item $\dim^{Y,\str{\theta}}(r) = \epsilon$. \label{item:controlRad}
	\end{enumerate}
\end{enumerate}

\begin{rem}
	At first, it appears difficult how to control \cref{item:controlProj,item:controlRad}. In practice, construct $r$ so that $\dim(ra_i) \leq \epsilon$ and $\dim^{Y,\str{\theta}}(r) \geq \epsilon$. Equality then follows immediately from \cref{cor:optimalDim}.
\end{rem}

We will give some insight into the verification below. Let $(a_i)$ be an enumeration of all $\lvert\cos(\theta - \varphi_i)\rvert$ and $\lvert\cos(\theta + \pi/2 - \varphi_i)\rvert$.

In our construction of a suitable $r$, we adapt the methods used in the proof of \cref{thm:firstThm}. However, instead of inserting long strings of zeroes into the binary expansions of $ra_i$, we pick a suitable oracle $T \in \cant$ and fold it into $ra_i$.

The oracle $T$ is suitable if it is random relative to $Y \oplus \str{\theta}$ (and hence all $\str{a}_i$). Now suppose $r$ is as constructed. Then $Y$ (which computes all $\str{a}_i$) can compute an initial segment of $T$ from an initial segment of $r$: just compute an initial segment of $ra_i$ for the correct $i$. Since $T$ is random relative to $Y \oplus \str{\theta}$, we can force $\dim^{Y,\str{\theta}}(r)$ to not dip too low by coding $T$ not too sparsely.

The details can be found in \cref{sec:verificationThm2}, and the theorem then follows by \cref{cor:optimalDim}. Further, we use the following simple result, which follows from symmetry of information.

\begin{lem}\label{lem:supAdditive}
	Let $A \in \cant$ be an oracle. For any $x,y \in \rr$ we have $\dim^A(x,y) \geq \dim^A(x) + \dim^{A,\str{x}}(y)$.
\end{lem}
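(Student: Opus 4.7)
The plan is to prove the inequality first at the level of Kolmogorov complexities at each precision $t$, then divide by $t$ and pass to $\liminf$. By \cref{lem:stringsNreals} relativized to $A$ (respectively to $A \oplus \str{x}$), together with the coding convention that $\str{(x,y)}[t]$ interleaves $\str{x}[t]$ and $\str{y}[t]$, the quantity $K^A_t(x,y)$ differs from $K^A(\str{x}[t]\str{y}[t])$ by at most $K(t) + O(1)$, and analogously for $K^A_t(x)$ and $K^{A, \str{x}}_t(y)$. So it suffices to establish
\[
    K^A(\str{x}[t]\str{y}[t]) \geq K^A(\str{x}[t]) + K^{A, \str{x}}(\str{y}[t]) - O(\log t).
\]

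The central tool is the relativized symmetry of information for prefix-free Kolmogorov complexity,
\[
    K^A(\sigma\tau) \geq K^A(\sigma) + K^A(\tau \mid \sigma) - O(\log K^A(\sigma\tau)),
\]
so for strings of length $O(t)$ the error is $O(\log t)$. Applied with $\sigma = \str{x}[t]$ and $\tau = \str{y}[t]$, and using that conditional complexity on a finite string differs from oracle complexity relative to that string by only a constant, this gives the inequality with $K^{A, \str{x}[t]}(\str{y}[t])$ in place of $K^{A, \str{x}}(\str{y}[t])$. To promote the finite conditioning $\str{x}[t]$ to the full oracle $\str{x}$, observe that $\str{x}$ computes its own prefix $\str{x}[t]$ given $t$, so
\[
    K^{A, \str{x}}(\str{y}[t]) \leq K^{A, \str{x}[t]}(\str{y}[t]) + K(t) + O(1),
\]
and this slack absorbs into the $O(\log t)$ error.

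Combining with the string-versus-real conversion yields
\[
    K^A_t(x, y) \geq K^A_t(x) + K^{A, \str{x}}_t(y) - O(\log t).
\]
Dividing by $t$, taking $\liminf_{t \to \infty}$, and invoking the elementary bound $\liminf(a_t + b_t) \geq \liminf a_t + \liminf b_t$ for non-negative sequences (while the error $O(\log t)/t$ tends to zero) delivers the claim.

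The main obstacle is bookkeeping the various logarithmic slacks---one from symmetry of information, one from \cref{lem:stringsNreals}, and one from promoting the finite condition $\str{x}[t]$ to the full oracle $\str{x}$---and confirming that every such term scales as $O(\log t)$ so that it washes out after division by $t$. No single step is deep in isolation, but care is needed to verify that no term is hiding an unwanted factor of $t$.
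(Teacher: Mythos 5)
Your proof is correct and takes essentially the same route as the paper, which simply cites Lutz and Lutz and points to the key fact $K(x \mid y) \geq K^{\str{y}}(x)$; your argument fills in the details via relativized symmetry of information and the conversion in \cref{lem:stringsNreals}, exactly the ingredients behind the cited result. (One small slip: in the paper's coding the truncation $\str{(x,y)}[t]$ is the \emph{concatenation} $\str{x}[t]\str{y}[t]$, not an interleaving, though your argument already works with the concatenation so nothing is affected.)
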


For an in-depth account of the interplay between relativised dimension and \emph{conditional dimension} of elements of $\rr^n$ see Lutz and Lutz \cite[4.3, 4.4]{kakeya}, who introduced the latter notion ibidem. The previous lemma is also a consequence of their arguments, and a proof can be found there, too; it follows from the fact that $K(x \mid y) \geq K^{\str{y}}(x)$. 

Recall that $Y$ computes $X$, and hence $\dim^X(r) \geq \dim^Y(r)$ for all $r$. In particular, we now see that
\[
	\dim^X(r,\theta) \geq \dim^X(\theta) + \dim^{X,\str{\theta}}(r) \geq \dim^X(\theta) + \dim^{Y,\str{\theta}}(r) \geq 1 + \epsilon
\]
since $\str{\theta}$ is random relative to $X$, and by our construction of $r$. The final steps of this high-level verification are then as follows: suppose we construct $E$ broadly as in the proof of \cref{thm:firstThm}. By the same argument as in said proof, using \cref{thm:pts} we see that
\[
	\dim_H(\proj_{\theta}(E)) = \dim_H(E(\theta)) \leq \epsilon
\]
since allowing oracles can only decrease the dimension of points. On the other hand, every oracle $X$ appears through some $\varphi_{\alpha} \in [0,\pi/2]$. Hence there exists a point $(r_{\alpha},\theta_{\alpha})$ for which $\str{\theta}$ is random relative to $X$. Since such a point exists for \emph{every} oracle, \cref{thm:pts} implies
\[
	\dim_H(E) \geq \dim^X(r_{\alpha},\theta_{\alpha}) \geq \dim^X(\theta) + \dim^{X,\str{\theta}}(r) \geq 1 + \epsilon
\]
by \cref{lem:supAdditive}. Then the conclusion follows from \cref{cor:optimalDim}.

We flesh out the details in the subsequent sections.

\subsection{Folding a suitable oracle into $r$.} Fix $\epsilon \in (0,1)$, and let
\[
	Z = Y \oplus \str{\theta}
\]
recalling that $Y$ already contains all $a_i$.
Instead of constructing a $T \in \cant$ random relative to $Z$ and then coding it sparsely to obtain dimension $\epsilon$, we use a result first proved by Athreya, Hitchcock, Lutz, and Mayordomo \cite[Thm.\ 6.5]{athreyaHitchcockEtAl}: for every $0 \leq \alpha \leq 1$ there exists $x \in \rr$ such that $\dim(x) = \Dim(x) = \alpha$, which they obtain precisely by sparsely coding a random sequence, interleaved with strings of zeroes. Their result relativises by choosing a sequence random to the oracle we desire. Even more so, their construction shows that we  have $\dim^Z(\str{x}) = \dim(\str{x}) = \epsilon$, so letting $T = \str{x}$ for a suitable $x \in \rr$ is as required.\\

In the construction of \cref{thm:firstThm} we focused on satisfying requirements: we demanded a particular number of consecutive zeroes to appear in the image in order to push the complexity down sufficiently far; and in our verification, we showed that, eventually, the gap between conditions will be large enough so that enough zeroes (read, a sufficiently large $s$) can be accommodated. In the present argument, we need to be more careful as we must at all times be able to give a good bound on how many bits of $T$ can be computed. Hence we fix the number of bits to be appended so that there is no ``overspill''. The following lemma yields such a bound.

\begin{lem}\label{lem:thm2notTooLong}
In the argument of \cref{lem:extExist}, if $s = \nu(k+1) - \nu(k) - 5$ then $\len(\rho_k) < \nu(k+1)$.
\end{lem}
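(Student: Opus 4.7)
The plan is to reuse the explicit length bound on $\rho_k$ already derived inside the proof of \cref{lem:eventuallySpace}. There, tracing through the construction of $\tau_k = \tau'_k 0^s$ and $\rho_k$ from \cref{lem:extExist} via two applications of the elementary fact that a maximal closed dyadic interval inside an open interval $I$ has defining string of length at most $-\log(\diam(I)) + 2$, it was shown that
\[
\len(\rho_k) \;\leq\; \log(a_i) + \len(\tau'_k) + s + 2 \;\leq\; \log(a_i) + \bigl(-\log(a_i) + \len(x_k) + 2\bigr) + s + 2 \;=\; \len(x_k) + s + 4.
\]
Since $\len(x_k) = \nu(k)$ by \cref{item:lengthXk} of the construction in \cref{par:construction}, this simplifies to $\len(\rho_k) \leq \nu(k) + s + 4$.

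The proof then consists of a single substitution: taking $s = \nu(k+1) - \nu(k) - 5$ yields
\[
\len(\rho_k) \;\leq\; \nu(k) + \bigl(\nu(k+1) - \nu(k) - 5\bigr) + 4 \;=\; \nu(k+1) - 1 \;<\; \nu(k+1),
\]
as required.

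The only point worth emphasising is that the above bound is purely geometric: it uses only the diameter estimates on the dyadic intervals $a_i[\real{\sigma}]$ and $a_i^{-1}[\real{\tau}]$, and in particular does \emph{not} depend on the Kolmogorov-complexity inequality $K(\tau)/\len(\tau) < 1/k$ that was used in \cref{lem:eventuallySpace} to select $s$. Hence the inequality continues to hold when $s$ is specified in advance (as here), rather than chosen to meet a complexity requirement. There is no real obstacle; the lemma is essentially a bookkeeping consequence of the length estimate already in hand, isolated and stated separately because the construction in \cref{sec:secondThm} will need to prescribe $s$ in advance (to control the amount of the oracle $T$ folded into $r$) rather than to drive complexity downward.
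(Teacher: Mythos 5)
Your proof is correct and takes essentially the same route as the paper: both reuse the bound $\len(\rho_k) \leq \nu(k) + s + 4$ established inside the proof of \cref{lem:eventuallySpace} and then substitute $s = \nu(k+1) - \nu(k) - 5$. Your additional remark—that this bound is a purely geometric consequence of the dyadic-interval diameter estimates and does not depend on the complexity-driven choice of $s$ in \cref{lem:eventuallySpace}, so it remains valid when $s$ is prescribed in advance—is a point the paper leaves implicit and is worth making explicit.
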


\begin{proof}
This follows from the proof of \cref{lem:eventuallySpace}: with $a, \rho, \tau'$ as in said argument, we have
\begin{align*}
	\len(\rho_k) &\leq \log(a) + \len(\tau') + s + 2 \leq \nu(k) + s + 4.
\end{align*}
Equating the term to $\nu(k+1)$ and demanding strict inequalities yields the result.
\end{proof}

In particular, the following corollary shows that, if we have space for $s$ bits to encode, we can code $s-5$ bits into the image.

\begin{cor}\label{cor:thm2notTooLongGeneral}
In the argument of \cref{lem:extExist}, with $a \in (0,1)$: if $\len(\rho) = m$ and $n > m$ then if $s = n - m - 5$ we have that $\len(\rho') < n$, where $\len(\rho')$ is the extension of $\rho$ that codes $s$ bits into $r\real{\rho}'$.
\end{cor}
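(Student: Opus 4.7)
The plan is to follow the proof of Lemma \ref{lem:thm2notTooLong} essentially verbatim, observing that the length bound derived there depends neither on the specific values $\nu(k)$ and $\nu(k+1)$ nor on the fact that the appended bits form a string of zeros. Replacing $\nu(k)$ by $m = \len(\rho)$ and $\nu(k+1)$ by $n$ yields the desired inequality, and since the construction in Lemma \ref{lem:extExist} is purely geometric, appending $s$ arbitrary bits in place of $0^s$ leaves every diameter and interval computation unchanged.

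More concretely, I would re-examine the chain of inequalities established in the proof of Lemma \ref{lem:eventuallySpace}. There, the key bound
\[
\len(\rho') \leq \log(a) + \len(\tau') + s + 2
\]
follows from the maximality of the closed dyadic interval $J \subset a^{-1}[\real{\tau}]$ together with the identity $\diam(a^{-1}[\real{\tau}]) = a^{-1}2^{-\len(\tau)}$, where $\len(\tau) = \len(\tau') + s$. Combining this with the analogous estimate $\len(\tau') \leq -\log(a) + m + 2$ (which uses $\len(\rho) = m$ in the role previously played by $\len(x_k) = \nu(k)$) gives
\[
\len(\rho') \leq m + s + 4.
\]
Substituting $s = n - m - 5$ now yields $\len(\rho') \leq n - 1 < n$, which is exactly what is claimed.

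I do not anticipate any genuine obstacle here: the statement is a direct re-parametrization of the bound already obtained in Lemma \ref{lem:thm2notTooLong}, and the only additional remark required is that, since neither the geometric argument of Lemma \ref{lem:extExist} nor the subsequent length estimates ever refer to the content of the $s$-bit suffix of $\tau$, the same bound persists when that suffix carries arbitrary information (as it will in the subsequent construction, where those bits encode a sparse copy of a sequence random relative to $Z$) instead of a block of zeros.
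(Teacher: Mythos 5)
Your proposal is correct and matches the paper's intended argument: the paper presents \cref{cor:thm2notTooLongGeneral} as an immediate re-parametrization of \cref{lem:thm2notTooLong} (itself derived from the bounds $\len(\rho') \leq \log(a) + \len(\tau') + s + 2$ and $\len(\tau') \leq -\log(a) + \len(\sigma) + 2$ established in the proof of \cref{lem:eventuallySpace}), and substituting $m$ for $\nu(k)$, $n$ for $\nu(k+1)$, and $s = n-m-5$ yields $\len(\rho') \leq n-1 < n$ exactly as you compute. Your additional remark that the interval/diameter computations never inspect the content of the appended $s$-bit block is also the key observation licensing the replacement of $0^s$ by arbitrary bits of $T$ in the subsequent construction.
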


We choose the folding map
\[
	\nu(k) = 2^{2^k} + k.
\]
We introduce the shift summand $k$ so as to make sure that the gaps between $\nu(k)$ and $\nu(k+1)$ have length $2^{2^{k+1}} + k + 1 - 2^{2^k} - k = 2^{2^{k+1}} - 2^{2^k} + 1$; the last bit is reserved to code a bit of $\varphi_\alpha$ into $ra_i$ (as per \cref{item:foldPhi}). Now, the gap we have available to extend is exactly of length
\begin{align}
	\nu(k+1) - \nu(k) - 1 = 2^{2^{k+1}} - 2^{2^k} = 2^{2^k}\left( 2^{2^k} - 1 \right) \label{eq:gapSizeForm}
\end{align}
which is divisible by $2^{(2^k - k)}$. This fact will be useful in the following subsection.

\subsection{Coding and saving blocks}
Na\"{i}vely, our argument should work as follows: at each stage, we construct a radius $r$ that, together with a suitable angle $\theta$, satisfies the requirement at hand. In order to preserve the high dimension of $r$ (relative to $Z$) and of the points $ra_i$, we code segments of $T$ into each $ra_i$. In particular: if $a_j$ is attended to right after $a_i$, and the last bit of $T$ coded into $ra_i$ is $T(k)$ for some $k < \omega$, then the first bit of $T$ coded into $ra_j$ at that stage is $T(k+1)$. Hence, with a long enough initial segment of $r$, the oracle $Z$ can compute a long initial segment of $T$ by just picking the correct $a_i$ (which $Z$ computes), computing $ra_i$, and picking out the coded bits of $T$.

For the sake of exposition, suppose $T \in \cant$ and consider
\[
	T^{(m,n)} = \langle T(m), T(m+1), \ldots, T(n-1) \rangle.
\]
In particular, observe that $\len(T^{(m,n)}) = n-m$, and that $T(n)$ does not appear in $T^{(m,n)}$. Now, the dimension of $ra_i$ is bounded above by the dimension of $T$: taking a sufficiently long initial segment $\str{ra_i}[t]$ of $ra_i$, we easily find a long string of the form $T^{(m,n)}$ coded into it. Provided that $\len(T^{(m,n)}) = n-m$ is large enough compared to $t$, this will force the dimension down---this latter condition is easily ensured by choosing a sparse enough folding map.

However, it is now difficult to show that the dimension of $ra_i$ does not drop properly below the dimension of $T$. The problem is that it is in general hard to tell how many bits in the multiplication of reals are determined by a single bit: e.g.\ if $a = 1/\pi$ and $\real{\sigma} = 0.\sigma$ for some $\sigma \in \fcant$, and $\tau \succ \sigma$, there is no bound on how many bits of the product $a\real{\tau}$ are correct in the sense that every extension yields the same initial segment.\footnote{For an extreme case, one considers $a$ and $\sigma$ so that $a\real{\sigma} = 1/2 - \epsilon$ for some very small $\epsilon$, and $\diam(a[\real{\sigma}]) > 1/4$. Then each bit of precision added to $\sigma$ shifts the interval a little bit to the right, and halves it. So the number of bits it takes until the first bit is determined, i.e.\ until $1/2$ does not appear in the interval, depends on $\epsilon$.}

We circumvent this issue as follows: as we extend $r$, we \emph{save blocks} of bits that are coded into $ra_i$ throughout the stage. We do this by pulling back the interval, as seen in \cref{lem:extExist}. Hence we define the \emph{block map} $\mu \colon \omega \rightarrow \omega$ by
\[
	\mu(k) = 2^{(2^k - k)}.
\]

Recall that our folding map is given by $\nu(k) = 2^{2^k} + k$. Hence, at stage $k$ with $r_k$ at hand, we have $\nu(k+1) - \nu(k)$ many bits to extend $r_k$. In particular, the \emph{number of blocks fitting into the gap of stage $k+1$} is given by
\begin{align}
	\xi(k) = \frac{\nu(k+1) - \nu(k) - 1}{\mu(k)} = \frac{2^{2^k}\left( 2^{2^k} - 1 \right)}{2^{(2^k - k)}} = 2^k \left( 2^{2^k} - 1 \right). \label{eq:howManyBlocks}
\end{align}
Note that, by our choices, we hence have $\xi(k)\mu(k) = \nu(k+1) - \nu(k) - 1$.

Now, a few lemmas are needed. Firstly, we need to have a good bound on how many bits we can code into $ra_i$ at each stage $k$, and in each block. And secondly, it is not clear that saving blocks does not cost too many bits. The first is not an issue due to \cref{cor:thm2notTooLongGeneral}. We resolve the second later in the \emph{cost lemma} \ref{lem:atMostSeven}, after introducing the construction in detail.

As we code $T$ in blocks, it is prudent to describe a suitable partitioning of $T$ beforehand. We do this here: by recursion reconstruct $T$ into segments $T^j_k$, where $k$ denotes the last completed stage (so if we see $T_k^j$ then we are in stage $k+1$), and $j$ the active block. In summary, at stage $k+1$:
\begin{itemize}
	\item we code $\xi(k) = 2^k \left( 2^{2^k} - 1 \right)$-many blocks, which follows from \cref{eq:howManyBlocks};
	\item and each block of $T$ coded into the image has length $\mu(k) - 5$, as we lose $5$ bits each time as per \cref{cor:thm2notTooLongGeneral}.
\end{itemize}
Hence we obtain
\[
	T = \bigcup_{3 \leq k < \omega} \left( \bigcup_{1 \leq j \leq \xi(k)} T^j_k \right)
\]
where the union operator denotes concatenation. Hence
\[
	T = T_3^1 \cup T_3^2 \cup \ldots \cup T_3^{2040} \cup T_3^1 \cup \ldots T_{k}^{\xi(k)} \cup T_{k+1}^1 \ldots 
\]
since $\xi(3) = 2040$. As mentioned, we lose $5$ bits each time we code a block of $T$, hence
\[
	\len(T^j_k) = \mu(k) - 5 = 2^{(2^k - k)} - 5.
\]
This confirms why the outer union starts at $k = 3$: below, we have $2^{(2^2 - 2)} - 5 < 0$, so there is no space to code any bits. In particular, the first stage at which bits are coded is stage $k+1 = 4$, with $\len(T^j_k) = \len(T_3^j) = 27$ and $\xi(k) = \xi(3) = 2040$.

\subsection{The construction}
Recall that our folding and block map are $\nu(k) = 2^{2^k} + k$ and $\mu(k) = 2^{(2^k - k)}$, respectively. Now, the radius $r$ is constructed as follows: suppose $\varphi_{\alpha}$ is the active requirement.
\begin{enumerate}
	\item Let $A \in \cant$.
	\item Let $x_0 = \emptyset$, the empty string.
	\item Let $x_k$ be given. At stage $k+1$, decode $k+1 = \langle i,n \rangle$; we now attend to requirement $i$.
	\item We iterate over all $\xi(k)$-many blocks. Let $0 \leq j < \xi(k) = 2^k (2^{2^k} - 1)$.
	\begin{enumerate}
		\item Let $x_k^0 = x_k$.
		\item At block $j+1$, suppose we have $x_k^{j}$. We apply \cref{lem:extExist}, but instead of coding zeroes, we code $T^{j+1}_k$ into $a_i\real{x}_k^j$. Let $\rho_k^{j+1}$ be the resulting extension. By filling up with $s$-many zeroes (courtesy of \cref{lem:thm2notTooLong}), we hence find $x_k^{j+1} = \rho_k^{j+1}0^s$ of length
		\[
			\len(x_k^{j}) + \mu(k) = \len(x_k) + 2^{(2^k - k)}(j+1).
		\]
	\end{enumerate}
	\item After the last block, we have one bit left to code $A$ or $\str{\varphi}_{\alpha}$ (this follows from \cref{eq:gapSizeForm}). By construction, $\len\left(x_k^{\xi(k)}\right) = \nu(k+1) -1$; hence define
	\[
		x_{k+1} = x_k^{\xi(k)}d
	\]
	where
	\begin{align*}
		d = \begin{cases}
			A(k/2) & \text{if $k$ is even}\\
			\str{\varphi}_{\alpha}((k-1)/2) & \text{if $k$ is odd;}
		\end{cases}
	\end{align*}
	hence $\len(x_{k+1}) = \nu(k+1)$, as intended. Further, we code the active line into the real we are building, so that we can recover it later.
\end{enumerate}

Of course, we code $A$ as in \cref{thm:firstThm} in order to be able to apply \cref{thm:zoltansThm}. This completes the construction.

\subsection{The verification}\label{sec:verificationThm2}
In the present context, we have two results to prove: that $\dim(ra_i) \leq \epsilon$ and that $\dim^Z(r) \geq \epsilon$, where $Z = Y \oplus \str{\theta}$. Then the theorem follows from \cref{cor:optimalDim}. We prove both results individually.

\subsubsection{The dimension of $ra_i$} Both verification arguments are ``bit counting'' arguments: we exhibit a piece of a complicated string coded inside $ra_i$, and show that said segment is long enough in a precise sense: its length dwarves the length of all non-coded bits. Let $a = a_i$.

Consider $\str{ar}[m]$ for some $m$ such that
\[
	\str{ar}[m] = \sigma \cup \left( \bigcup_{1 \leq j \leq \xi(k)} \sigma_j T^j_k \right)
\]
for some $k$; hence stage $k + 1$ has just been completed.\footnote{Considering the strings at the end of stages is prudent as we easily have access to a long consecutive segment of $T$ (albeit interrupted).} We also know that $\len(\sigma) \leq -\log(a) + \len(x_k) + 2 = -\log(a) + \nu(k) + 2$, by \cref{lem:eventuallySpace}. Further, the \emph{cost} of saving a block is given by a bound on the length of each $\sigma_j$ which we give here:

\begin{lem}[The cost lemma]\label{lem:atMostSeven}
	Let $a \in (0,1)$ and $r_m \in \fcant$. As in \cref{lem:extExist}, find $\real{\tau}_m$ and $I_m$ dyadic such that $[\real{\tau}_m] \subset I_m \subset a[\real{r}_m]$; let $\tau'_m$ be the left end-point of $I_m$. Further, let $J \subset a^{-1}[\real{\tau}_m]$ be dyadic, where $\real{\rho}_k$ is the left-endpoint of $J$. Let $r_{m+1} = \rho_m0^t$ so that $\len(r_{m+1}) = \len(r_m) + \mu(k)$ where $k$ denotes the current stage. Finally, suppose $\tau'_{m+1}$ is the left end-point of $I_{m+1} \subset a[\real{r}_{m+1}]$.
	
	Then $|\len(\tau'_{m+1}) - \len(\tau_m)| \leq 7$.
\end{lem}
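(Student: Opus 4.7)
The plan is to compute $\len(\tau'_m)$, $\len(\tau_m)$, and $\len(\tau'_{m+1})$ explicitly in terms of $\len(r_m)$, $\mu(k)$, and $\log_2 a$, and then to subtract. The single ingredient I will need is a sharpening of observation (ii) just before \cref{lem:extExist}: for any open interval $(x,y) \subset \rr$ of diameter $d > 0$, the largest closed dyadic interval $I \subset (x,y)$ has $-\log_2 d \leq -\log_2 \diam(I) \leq -\log_2 d + 2$. This is elementary: level-$k$ dyadic intervals are spaced $2^{-k}$ apart, so one fits whenever $2^{-k} \leq d/2$, while none fits if $2^{-k} > d$.

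Applying this refinement twice---to $a[\real{r}_m]$ (diameter $a\cdot 2^{-\len(r_m)}$) and to $a[\real{r}_{m+1}]$ (diameter $a\cdot 2^{-\len(r_m) - \mu(k)}$)---I obtain constants $\beta_m,\beta_{m+1} \in [0,2]$ such that
\begin{align*}
\len(\tau'_m) &= \len(r_m) - \log_2 a + \beta_m, \\
\len(\tau'_{m+1}) &= \len(r_m) + \mu(k) - \log_2 a + \beta_{m+1}.
\end{align*}

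For $\len(\tau_m)$, the only fact I need from the construction preceding the lemma is that $\tau_m$ extends $\tau'_m$ by exactly $\mu(k) - 5$ additional bits; this is precisely the content of \cref{cor:thm2notTooLongGeneral} applied with the block-length budget $s = \mu(k) - 5$ (and it is the same $-5$ regardless of whether the appended bits are zeroes or the block $T^j_k$). Hence $\len(\tau_m) = \len(\tau'_m) + \mu(k) - 5$. Subtracting now yields
\begin{align*}
\len(\tau'_{m+1}) - \len(\tau_m) &= \bigl[\len(r_m) + \mu(k) - \log_2 a + \beta_{m+1}\bigr] - \bigl[\len(\tau'_m) + \mu(k) - 5\bigr] \\
&= \beta_{m+1} - \beta_m + 5 \;\in\; [3, 7],
\end{align*}
using the formula for $\len(\tau'_m)$ once more. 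This immediately yields $|\len(\tau'_{m+1}) - \len(\tau_m)| \leq 7$.

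The main obstacle here is really just careful bookkeeping: I need to verify that the $-5$ in $\len(\tau_m) - \len(\tau'_m)$ is indeed the same $-5$ produced by \cref{cor:thm2notTooLongGeneral} (which in turn traces back to the $+4$ slack in \cref{lem:eventuallySpace}), and to confirm the dyadic-interval sharpening to a window of size $2$ rather than the looser $4$ used in fact (ii). No conceptual obstacle arises; the lemma is essentially a direct length comparison.
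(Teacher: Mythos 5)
Your proof is correct and follows essentially the same route as the paper's: both express $\len(\tau'_m)$ and $\len(\tau'_{m+1})$ with a slack in $[0,2]$ via the dyadic-interval bound (the paper's observation (ii)), use $\len(\tau_m) = \len(\tau'_m) + \mu(k) - 5$ from the block-coding budget of \cref{cor:thm2notTooLongGeneral}, and subtract. The only small difference is cosmetic: you compute the two-sided bound $\len(\tau'_{m+1}) - \len(\tau_m) = \beta_{m+1} - \beta_m + 5 \in [3,7]$ directly, whereas the paper derives the upper bound $\leq 7$ and invokes the separate construction fact $\len(\tau'_{m+1}) \geq \len(\tau_m)$ for the lower side of the absolute value.
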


Before we proceed with the proof, a few comments are in order. Firstly, consulting \cref{fig:miamiFig} alongside the statement and proof of the above lemma is useful, as the figure serves as its motivation. Conceptually, one can think of the hypotheses of this lemma as the intermediate step between moving from one block to the next within a given stage in our construction: $r_m$ is the available string in block $m$ inside some stage, and $\rho \succ \sigma$ is its computed extension. Importantly, $a[\real{r}_{m+1}]$ contains $\tau_m$ as a substring. We are asking: after saving $\tau_m$ in $a[\real{r}_{m+1}]$, how many bits are lost before we begin coding the next block? In particular, if we construct a real $r$ by such approximations $r_m$ and we have established that
\[
	ar \succ \tau_m \lambda \tau_{m+1}
\]
for some $\lambda \in \fcant$ by successive block saving, then how long can $\lambda$ be at most?

\begin{proof}
	By assumption we have $[\real{\tau}_m] \subset I_m \subset [\real{r}_m]$, and so $\diam([\real{\tau}_m]) \leq \diam(I_m) \leq \diam([\real{r}_m])$. Applying $-\log$ and by \cref{item:logLength1} we have $-\log(\diam(I_m)) \in [-\log(a) + \len(r_k), \len(\tau_k)]$. Since $\real{\tau}'_k$ is the left end-point of $I_k$ we have in particular that $\len(\tau'_m) \in [-\log(a) + \len(r_m), \len(\tau_m)]$. We can give an even better bound: by \cref{item:logLength2}, we see that $\len(\tau'_m) \leq -\log(\diam(a[\real{r}_m])) + 2 = -\log(a) + \len(r_m) + 2$, and hence
	\[
		\len(\tau'_m) \in [-\log(a) + \len(r_m), -\log(a) + \len(r_m) + 2].
	\]
	
	By construction, at stage $k+1$ we code $\mu(k) - 5$ bits into the image for each block (we lose $5$ bits each block, as per \cref{cor:thm2notTooLongGeneral}). Hence $\len(\tau_m) = \len(\tau'_m) + (\mu(k) - 5)$. Therefore, observing by construction that $\len(\tau'_{m+1}) \geq \len(\tau_m)$, we see that
	\begin{align*}
		\len(\tau'_{m+1}) - \len(\tau_m) &= \len(\tau'_{m+1}) - \len(\tau'_m) - (\mu(k) - 5)\\
		&\leq -\log(a) + \len(r_{m+1}) + 2 + \log(a) - \len(r_m) - (\mu(k) - 5)\\
		&= (\len(r_{m+1}) - \len(r_m)) - (\mu(k) - 5) + 2\\
		&= \mu(k) - (\mu(k) - 5) + 2\\
		&= 7
	\end{align*}
	where we use that the block size is $\mu(k)$, and hence $\len(r_{m+1}) - \len(r_m) = \mu(k)$.
\end{proof}

Hence $\len(\sigma_j) \leq 7$. For simplicity, we let
\[
	T_k = T^1_k \cup \ldots \cup T^{\xi(k)}_k;
\]
hence $\len(T_k) = \xi(k)(\mu(k) - 5)$. The next lemma provides the final technical detail in this half of our verification. For simplicity of notation, let
\[
	S_k = \bigcup_{1 \leq j \leq \xi(k)} \sigma_j T^j_k.
\]

\begin{lem}\label{lem:TkSkDifference}
	For $k < \omega$ and $\sigma, (\sigma_j)$ as above, we have
	\[
		|K(T_k) - K (\sigma S_k)| \leq O(2^{2^k}).
	\]
\end{lem}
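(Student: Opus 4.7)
The plan is to bound $|K(T_k) - K(\sigma S_k)|$ by establishing each inequality separately, based on the observation that $T_k$ and $\sigma S_k$ carry the same random content (the blocks $T_k^j$) and differ only in short structural data: a single prefix $\sigma$ and the separators $\sigma_j$. Concretely, I would exhibit decompression programs translating between the two representations using auxiliary data of length $O(2^{2^k})$.

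For the direction $K(\sigma S_k) \leq K(T_k) + O(2^{2^k})$, append to an optimal program for $T_k$ the string $\sigma$ together with the stage index $k$. A decompression routine computes $\mu(k)$ and $\xi(k)$ from $k$, partitions $T_k$ into the $\xi(k)$ blocks $T_k^1, \ldots, T_k^{\xi(k)}$ of length $\mu(k) - 5$, and assembles $\sigma S_k$ by prepending $\sigma$ and splicing in the separators $\sigma_j$. Since $\len(\sigma) \leq \nu(k) + O(1)$, this contributes $O(\nu(k)) = O(2^{2^k})$ bits of auxiliary data, absorbing $O(\log k)$. The reverse direction $K(T_k) \leq K(\sigma S_k) + O(2^{2^k})$ is symmetric: using $\sigma$, $k$, and the block-length data, one strips $\sigma$ and each $\sigma_j$ from $\sigma S_k$ in turn, leaving $T_k$ as output.

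The crucial point to overcome---and the main obstacle---is that each of these decompression routines must determine the values, or at least the lengths, of the $\xi(k)$ separators $\sigma_j$. Transmitting each $\sigma_j$ verbatim would cost up to $7$ bits by \cref{lem:atMostSeven}, for a total of $O(\xi(k)) = O(2^{k+2^k})$, which exceeds the claimed bound. The resolution is that the $\sigma_j$'s are not independent pieces of information: each is a deterministic function of $a = a_i$, of $\sigma$, and of the previously processed blocks $T_k^1, \ldots, T_k^{j-1}$, namely the output of the block-extension procedure of \cref{lem:extExist}. Since $a_i$ is computable from the background oracle relative to which this complexity is measured in the proof of \cref{thm:secondThm}, the decompressor can regenerate the $\sigma_j$'s on the fly by simulating the construction, incurring no additional per-block cost. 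The total overhead therefore reduces to $\len(\sigma) + O(\log k) = O(2^{2^k})$, as claimed.
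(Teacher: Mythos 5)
Your proposal diverges from the paper's at the crucial step, and the divergence introduces a genuine gap. The paper's proof does \emph{not} try to regenerate the separators $\sigma_j$; it simply transmits them (and their positions $m_j$) as auxiliary data, bounding the total cost by
\[
K(\sigma) + \sum_{1 \leq j \leq \xi(k)} K(\sigma_j, m_j),
\]
using $\len(\sigma_j) \leq 7$ and $K(m_j) = O(2^{k+1})$ to conclude the sum is of order $\xi(k)2^{k+1}$, which is $o(2^{2^{k+1}})$ and hence harmless in the later $\liminf$ (the paper's ``$O(2^{2^k})$'' is being used loosely to mean exactly this). Your instinct that transmitting the $\sigma_j$'s verbatim gives something of order $\xi(k) \approx 2^{k + 2^k}$ rather than $2^{2^k}$ is numerically correct, but it is not the obstacle you think it is, because what the downstream argument actually requires is only that the error be dominated by $\len(T_k) \sim 2^{2^{k+1}}$.

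The ``resolution'' you propose instead --- have the decompressor regenerate the $\sigma_j$'s by simulating the construction of \cref{lem:extExist} --- is the step that fails. That simulation requires the multiplier $a = a_i$ (one must compute $a[\real{\cdot}]$ and locate dyadic intervals inside it), and $a_i$ is an arbitrary real of the form $\lvert\cos(\theta - \varphi_i)\rvert$, not a computable one. Your appeal to ``the background oracle relative to which this complexity is measured'' does not help: the lemma is stated for plain, unrelativised $K$, and it is used in exactly that form in the first half of the verification (the bound $\dim(ra_i) \leq \epsilon$, where one needs $K(\sigma S_k) \leq K(T_k) + o(2^{2^{k+1}})$ with no oracle in sight). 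A plain machine cannot regenerate the $\sigma_j$'s without $a$, so your decompressor is not a valid witness for that inequality. At best your argument would prove the $K^Z$-relativised version of the lemma, which would cover the second application (the $\dim^Z(r) \geq \epsilon$ direction) but not the first. To repair the proof you must do what the paper does: include the $\sigma_j$'s (and enough positional information to splice them) explicitly in the auxiliary string, and then observe that the resulting overhead, though larger than $2^{2^k}$ in absolute terms, is still vanishingly small relative to $\len(\sigma S_k)$.
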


\begin{proof}
	This is an easy ``bit counting'' argument: the number of bits by which $T_k$ and $\sigma S_k$ differ is given by $\len(\sigma) + \sum_j \len(\sigma_j)$. If we also know where the $\sigma_j$'s are located, then we can construct each string from the other. Thus,
	\[
		|K(T_k) - K(\sigma S_k)| \leq K(\sigma) + \sum_{1 \leq j \leq \xi(k)} K(\sigma_j,m_j)
	\]
	omitting constants, where $m_j$ denotes the index at which $\sigma_j$ begins inside $S_k$. We know $\len(\sigma_j) \leq 7$ and $\len(\sigma) \leq -\log(a) + \len(x_k) + 2 = -\log(a) + 2^{2^k} + k + 2$. Further,
	\begin{align*}
		\len(\sigma S_k) &= \len(\sigma) + \sum_{1 \leq j \leq \xi(k)} \len(\sigma_j) + \len(T_k)\\
		&\leq -\log(a) + \len(x_k) + 2 + 7\xi(k) + \xi(k)(\mu(k)-5)\\
		&= -\log(a) + \len(x_k) + 2 +\xi(k)(\mu(k) + 2)
	\end{align*}
	since each of the $\xi(k)$-many blocks codes $\mu(k) - 5$-many bits. Observe that 
	\[
		\xi(k)\mu(k) = 2^{2^k}(2^{2^k} - 1)
	\]
	and hence is of order $2^{2^{k+1}}$. As $m_j \leq \len(S_k)$ we see that $m_j$ is thus at most of order $2^{2^{k+1}}$. But now $K(m_j)$ is at most of order $2^{k+1}$. It is now easily seen that $\sum_j K(\sigma_j,m_j)$ is of order at most $\xi(k)2^{k+1}$, which is $O(2^{2^k})$, as required.
\end{proof}

We can now complete the argument: using the previous lemma, we see
\[
	K(\str{ar}[m]) = K(\sigma S_k) = K(T_k) + O(2^{2^k}).
\]
Further, observe that $\len(T_k)$ is of order $2^{2^{k+1}}$, since $\len(T_k) = \xi(k)(\mu(k) - 5)$. As before, $\lim_{k \rightarrow \infty} \frac{2^{2^k}}{2^{2^{k+1}}} = 0$, and so we may ignore terms of order at most $2^{2^k}$. This allows us to simplify: let $\mathcal{D}$ be the set of $m < \omega$ at which requirement $a = a_i$ has just been attended to. (In other words, $\str{ar}[m] = \sigma S_k$ for some $k$.) Then
\begin{align*}
	\dim(ar) \leq \liminf_{m \in \mathcal{D}} \frac{K(\str{ar}[m])}{m} \leq \liminf_{m \in \mathcal{D}} \frac{K(T_k)}{m} &= \epsilon
\end{align*}
by definition of $T$.

\subsubsection{The dimension of $r$ with respect to $Z$} Recall that 
\[
	Z = Y \oplus \str{\theta}
\]
and that $Y$ contains all $a_i$. As we need to show that $\dim^Z(r) \geq \epsilon$, it does not suffice to exhibit a set of favourable elements, such as our set $\mathcal{D}$ in the previous lemma. Instead, we show we can decode enough elements of $T$ from \emph{any} initial segment of $r$.

Suppose 
\[
	\str{r}[m] = \sigma_1 \cdots \sigma_{k+1} b_1 \cdots b_n \tau
\]
where
\begin{itemize}
	\item $\sigma_{i}$ denotes the initial segment of $r$ that satisfied the stage $i$;
	\item $b_j$ denotes the substring of $r$ that satisfied block $j$ of stage $k+2$; and
	\item $\tau$ is the initial segment of the substring satisfying block $n+1$.
\end{itemize}
Hence, observe we are at stage $k+2$, and $n$ blocks have already been satisfied.

Inside stage $k+1$, the substring $T_k$ has been coded into $ar$. Hence, using the oracle $Z$ which computes all $a_i$, we can recover $T_k$ from $\str{ar}$. Recall that
\[
	\len(T_k) = \xi(k)(\mu(k) - 5).
\]
Observe that since $\lim_{k \rightarrow \infty} \frac{2^{2^k}}{2^{2^{k+1}}} = \lim_{k \rightarrow \infty} \frac{1}{2^k} = 0$, the length of $T_k$ already dwarves the lengths of $T_1 + \ldots + T_{k-1}$; hence, it suffices to compute the blocks saved at stage $k+1$.

Secondly, the worst case to consider above is the case where $n=0$: in that case the initial segment $\sigma_{k+1}$ needs to carry enough information to survive against $\tau$, where $\tau$ is at most of length $\mu(k+1) - 1$. This is not an issue, since $\len(T_k) = \xi(k)(\mu(k) - 5)$ and
\[
	\lim_{k \rightarrow \infty} \frac{\mu(k+1) - 1}{\xi(k)(\mu(k) - 5)} = 0.
\]
Hence, the information provided in $T_k$ dwarves the unfinished block $\tau$. It now suffices to show that $T_k$ and the completely coded substrings $T^1_{k+1},\ldots,T^n_{k+1}$ can be easily recovered from $\str{ar}[m]$. This follows from an argument analogous to \cref{lem:TkSkDifference}:
\begin{itemize}
	\item take a machine that trims $\str{r}$ to length $\nu(k+1) - 1$, and denote the resultant string by $\rho$ (this is where stage $k + 1$ has just been completed);
	\item compute the correct projection factor $a_i = a$ for stage $k + 1$ using $Z$ (and from the Cantor pairing function);
	\item compute the largest dyadic interval in $a[\real{\rho}]$, and let $d$ denote its left end-point. Now
	\[
		\str{d} = \sigma S_k \sigma'
	\]
	where $\len(\sigma') \leq 7$, by the cost lemma \ref{lem:atMostSeven}.
	\item By the previous \cref{lem:TkSkDifference}, we know that the complexity of isolating $T_k$ from $\sigma S_k \sigma'$ is not significant, as required. An identical argument recovers the $n$ blocks.
\end{itemize}

It now follows from \cref{lem:TkSkDifference} that
\begin{align}
	K^Z(T_k \cup T^1_{k+1} \cup \ldots \cup T^n_{k+1}) &\leq K^Z(\str{r}[m]) + O(2^{2^k}) + O(n2^{k+2}) \nonumber
	\intertext{where $n < \xi(k+1)$. Thus, in particular}
	\frac{K^Z(T_k \cup T^1_{k+1} \cup \ldots \cup T^n_{k+1})}{m} &\leq \frac{K^Z(\str{r}[m])}{m} + \frac{O(2^{2^k}) + O(n2^{k+2})}{m} \label{eq:theFinalStep}
\end{align}
where $m = \nu(k+1) + n\mu(k+1) + \len(\tau)$ and $n < \xi(k+1)$. Next, we verify that the length of $T$ computed on the left-hand side of \cref{eq:theFinalStep} is sufficiently long: indeed,
\begin{align*}
	|m - \len(T_k \cup T^1_{k+1} \cup \ldots \cup T^n_{k+1})| &= \len(\tau) + \nu(k) +1 + 5\xi(k) + 5n.
\end{align*}
Now, $m = \nu(k+1) + n\mu(k+1) + \len(\tau)$ and $n < \xi(k+1)$ imply
\[
	\frac{\len(\tau) + \nu(k) + 5\xi(k) + 5n}{m} \leq \frac{\len(\tau) + \nu(k) + 5(\xi(k) + \xi(k+1))}{\len(\tau) + \nu(k+1) + \xi(k+1)\mu(k+1)};
\]
applying limits as $k$ goes to infinity shows that the term vanishes. Going back and applying $\liminf$ to both sides of \cref{eq:theFinalStep} now proves that its left-hand side equals $\epsilon$.\\

Finally, since $m$ is of order $\nu(k+1) + n\mu(k+1)$, i.e.\ of order at least $2^{2^{k+1}}$, the right-hand side of \cref{eq:theFinalStep} simplifies to its first term. Putting it all together and applying $\liminf$ we hence obtain
\begin{align*}
	\epsilon &= \liminf_{k \rightarrow \infty} \frac{K^Z(T_k \cup T^1_{k+1} \cup \ldots \cup T^n_{k+1})}{m} \leq \frac{K^Z(\str{r}[m]}{m} = \dim^Z(r)
\end{align*} 
as required. \Cref{thm:secondThm} now follows immediately from the same arguments as the proof of \cref{thm:firstThm}, and the overview we gave at the start of this section.

\bibliographystyle{abbrv}
\bibliography{marstrandsThm.bib}

\end{document}